\newtheorem{theorem}{Theorem}[section]
\newtheorem{corollary}[theorem]{Corollary}
\newtheorem{lemma}[theorem]{Lemma}
\newtheorem{proposition}[theorem]{Proposition}
\theoremstyle{definition}
\newtheorem{remarks}[theorem]{Remarks}
\theoremstyle{remark}
\newtheorem{remark}[theorem]{Remark}
\numberwithin{equation}{section}
\renewcommand\-{\text -}
\newcommand\bA{{\mathbb A}}
\newcommand\bD{{\mathbb D}}
\newcommand\bF{{\mathbb F}}
\newcommand\bG{{\mathbb G}}
\newcommand\bP{{\mathbb P}}
\newcommand\bQ{{\mathbb Q}}
\newcommand\bZ{{\mathbb Z}}
\newcommand\bfA{{\bf A}}
\newcommand\bfB{{\bf B}}
\newcommand\bfC{{\bf C}}
\newcommand\bfD{{\bf D}}
\newcommand\bfE{{\bf E}}
\newcommand\bfF{{\bf F}}
\newcommand\bfG{{\bf G}}
\newcommand\bfM{{\bf M}}
\newcommand\bfL{{\bf L}}
\newcommand\bfR{{\bf R}}
\newcommand\bfT{{\bf T}}
\newcommand\bfU{{\bf U}}
\newcommand\bfX{{\bf X}}
\newcommand\cA{{\mathcal A}}
\newcommand\cC{{\mathcal C}}
\newcommand\cD{{\mathcal D}}
\newcommand\cE{{\mathcal E}}
\newcommand\cF{{\mathcal F}}
\newcommand\cL{{\mathcal L}}
\newcommand\cM{{\mathcal M}}
\newcommand\cO{{\mathcal O}}
\newcommand\cP{{\mathcal P}}
\newcommand\cS{{\mathcal S}}
\newcommand\cT{{\mathcal T}}
\newcommand\cU{{\mathcal U}}
\newcommand\cV{{\mathcal V}}
\newcommand\cX{{\mathcal X}}
\newcommand\cY{{\mathcal Y}}
\newcommand\tX{{\widetilde X}}
\newcommand\tbfL{\widetilde{\bf L}}
\newcommand\tbfM{\widetilde{\bf M}}
\newcommand\tbfR{\widetilde{\bf R}}
\newcommand\tbfT{\widetilde{\bf T}}
\newcommand\tcC{\widetilde{\mathcal{C}}}
\newcommand\tcF{\widetilde{\mathcal{F}}}
\newcommand\tcL{\widetilde{\mathcal{L}}}
\newcommand\tcM{\widetilde{\mathcal{M}}}
\newcommand\tcU{\widetilde{\mathcal{U}}}
\newcommand\tcX{\widetilde{\mathcal{X}}}
\newcommand\tucC{\widetilde{\underline{\mathcal{C}}}}
\newcommand\tucL{\widetilde{\underline{\mathcal{L}}}}
\newcommand\tucS{\widetilde{\underline{\mathcal{S}}}}
\newcommand\tucT{\widetilde{\underline{\mathcal{T}}}}
\newcommand\tucU{\widetilde{\underline{\mathcal{U}}}}
\newcommand\ucA{\underline{\mathcal A}}
\newcommand\ucC{\underline{\mathcal C}}
\newcommand\ucL{\underline{\mathcal L}}
\newcommand\ucS{\underline{\mathcal S}}
\newcommand\ucT{\underline{\mathcal T}}
\newcommand\ucU{\underline{\mathcal U}}
\newcommand\ucV{\underline{\mathcal V}}
\newcommand\wA{\widehat{A}}
\newcommand\wC{\widehat{C}}
\newcommand\wcC{\widehat{\mathcal{C}}}
\renewcommand\char{{\rm char}}
\newcommand\hd{{\rm hd}}
\newcommand\id{{\rm id}}
\renewcommand\mod{{\rm mod}}
\newcommand\op{{\rm op}}
\newcommand\tors{{\rm tors}}
\newcommand\D{{\rm D}}
\newcommand\R{{\rm R}}
\newcommand\U{{\rm U}}
\newcommand\Aut{{\rm Aut}}
\newcommand\Coker{{\rm Coker}}
\newcommand\End{{\rm End}}
\newcommand\Ext{{\rm Ext}}
\newcommand\Fract{{\rm Fract}}
\newcommand\Gal{{\rm Gal}}
\newcommand\GL{{\rm GL}}
\newcommand\Hom{{\rm Hom}}
\renewcommand\Im{{\rm Im}}
\newcommand\Ker{{\rm Ker}}
\renewcommand\mod{{\rm mod}}
\newcommand\Mod{{\rm Mod}}
\newcommand\Modfg{{\rm Mod}^{\rm fg}}
\newcommand\Modss{{\rm Mod}^{\rm ss}}
\newcommand\Modtors{{\rm Mod}^{\rm tors}}
\newcommand\SL{{\rm SL}}
\newcommand\Spec{{\rm Spec}}
\newcommand\tmod{\widetilde{{\rm mod}}}
\begin{document}

\title{Commutative algebraic groups up to isogeny. II}

\author{Michel Brion}

\address{Universit\'e Grenoble Alpes, Institut Fourier, CS 40700,
38058 Grenoble cedex 09, France}

\email{Michel.Brion@univ-grenoble-alpes.fr}

\subjclass[2010]{Primary 14L15, 16G10; Secondary 14K02, 16E10, 18E35, 20G07}

\keywords{commutative algebraic group, isogeny, torsion pair, 
homological dimension}

\begin{abstract}
This paper develops a representation-theoretic approach to the isogeny 
category $\ucC$ of commutative group schemes of finite type over a field $k$, 
studied in \cite{Brion-II}. We construct a ring $R$ such that 
$\ucC$ is equivalent to the category $R$-mod of all left $R$-modules 
of finite length. We also construct an abelian category of $R$-modules, 
$R$-$\tmod$, which is hereditary, has enough projectives, 
and contains $R$-mod as a Serre subcategory; this yields a more 
conceptual proof of the main result of \cite{Brion-II}, asserting that 
$\ucC$ is hereditary. We show that $R$-$\tmod$ is equivalent to the 
isogeny category of commutative quasi-compact $k$-group schemes. 
\end{abstract}

\maketitle

\tableofcontents

\section{Introduction}
\label{sec:i}
 
In this paper, we develop a representation-theoretic approach to 
the isogeny category of commutative algebraic groups over a field $k$, 
studied in \cite{Brion-II}. This abelian category, that we 
denote by $\ucC$, is equivalent to the quotient of the abelian category 
$\cC$ of group schemes of finite type over $k$ by the Serre subcategory 
$\cF$ of finite $k$-group schemes. The main result of \cite{Brion-II} 
asserts that $\ucC$ is \emph{hereditary}, i.e., $\Ext^i_{\ucC}(G,H) = 0$ 
for all $i \geq 2$ and all $G,H \in \ucC$. 
By a theorem of Serre (see \cite[10.1]{Serre-GP}) which was 
the starting point of our work, this also holds for the original 
category $\cC$ when $k$ is algebraically closed of characteristic 
$0$. But for certain fields $k$, the extension groups in $\cC$ 
can be non-zero in arbitrarily large degrees, as a consequence of 
\cite[Thm.~1]{Milne}.

To prove that $\ucC$ is hereditary, the approach of \cite{Brion-II} 
is similar to that of Serre in \cite{Serre-GP}, generalized by 
Oort in \cite{Oort66} to determine the extension groups in $\cC$ when 
$k$ is algebraically closed of positive characteristic. As $\ucC$ is 
easily seen to be a finite length category, it suffices to check the 
vanishing of higher extension groups for all simple objects 
$G,H \in \ucC$. These are the additive group, the simple tori and 
the simple abelian varieties, and one may then adapt the case-by-case
analysis of \cite{Serre-GP, Oort66} to the easier setting  of the
isogeny category.

In this paper, we obtain a more conceptual proof, by constructing a ring 
$R$ such that $\ucC$ is equivalent to the category $R$-mod of all left 
$R$-modules of finite length; moreover, $R$-mod is a Serre category
of an abelian category $R$-$\tmod$ of left $R$-modules, which is hereditary 
and has enough projectives. For a more precise statement, we refer to Theorem 
\ref{thm:main} in \S \ref{subsubsec:gagc}, 
which can be read independently of the rest of the paper. Our result 
generalizes, and builds on, the equivalence of the isogeny category 
of unipotent groups over a perfect field of positive characteristic with the 
category of modules of finite length over a localization of the Dieudonn\'e 
ring (see \cite[V.3.6.7]{DG}).

More specifically, the abelian category $\cC$ has very few projectives: 
the unipotent groups in characteristic $0$, and the trivial group 
otherwise (see \cite[Thm.~2.9, Cor.~5.15]{Brion-II})).
This drawback was remedied in \cite{Serre-GP} by considering the abelian 
category $\wC$ of \emph{pro-algebraic groups}. If $k$ is algebraically 
closed of characteristic $0$, then $\wcC$ is hereditary, has enough
projectives, and contains $\cC$ as a Serre subcategory. 
We obtain a similar result for the isogeny category $\ucC$ 
over an arbitrary field; this category has more projectives than 
$\cC$ (e.g., the tori), but still not enough of them. We show that $\ucC$ 
is a Serre subcategory of the isogeny category $\tucC$ of 
\emph{quasi-compact group schemes}, which is a hereditary 
abelian category having enough projectives. In addition, $\tucC$ 
is equivalent to the category $R$-$\tmod$ mentioned above
(see again \S \ref{subsubsec:gagc} for a more precise statement).
The quasi-compact group schemes, studied by Perrin in 
\cite{Perrin, Perrin-II}, form a restricted class of pro-algebraic 
groups, discussed in more details in \S \ref{subsubsec:gagqc}.

Since $\ucC$ is a length category, it is equivalent to the category 
of all left modules of finite length over a basic pseudo-compact ring 
$A$, which is then uniquely determined (this result is due to 
Gabriel, see \cite[IV.4]{Gabriel} and \cite[7.2]{Gabriel-II}).
The ring $R$ that we construct is also basic, but not pseudo-compact;
it may be viewed as a dense subring of $A$. Its main advantage 
for our purposes is that the above category $R$-$\tmod$ consists of 
$R$-modules but not of $A$-modules.

This paper is organized as follows. In Section \ref{sec:chc}, 
we study homological properties of abelian categories equipped with 
a torsion pair. This setting turns out to be very useful when 
dealing with algebraic groups, since these are obtained as extensions 
of groups of special types: for example, every connected algebraic 
group is an extension of an abelian variety by a affine algebraic 
group, and these are unique up to isogeny. The main result of 
Section \ref{sec:chc} is Theorem \ref{thm:equiv}, which explicitly 
describes certain abelian categories equipped with a torsion pair,
in terms of modules over triangular matrix rings.

Section \ref{sec:acag} begins with a brief survey of the structure
theory for commutative algebraic groups, with emphasis on categorical
aspects. We also treat in parallel the affine group schemes
(which form the pro-completion of the abelian category of affine
algebraic groups) and the quasi-compact group schemes. We then
obtain our main Theorem \ref{thm:main} by combining all these
structure results with Theorem \ref{thm:equiv}. Next, after some
auxiliary developments in Subsection \ref{subsec:fp}, we study the
finiteness properties of the spaces of morphisms and extensions
in the isogeny category $\ucC$. In particular, we show that
$\ucC$ is $\bQ$-linear, Hom- and Ext-finite if and only if $k$
is a number field (Proposition \ref{prop:veavf}). The final
Subsection \ref{subsec:frt} initiates the study of the indecomposable
objects of $\ucC$, by considering a very special situation: extensions 
of abelian varieties with prescribed simple factors by unipotent groups,
over the field of rational numbers. Using a classical result of Dlab and 
Ringel on representations of species (see \cite{DR}), we obtain 
a characterization of finite representation type in that setting 
(Proposition \ref{prop:frt}).

\medskip

\noindent
{\bf Notation and conventions.}
All considered categories are assumed to be small. We denote categories
by calligraphic letters, e.g., $\cX, \cY$, and functors by boldface letters,
e.g., $\bfL, \bfR$. By abuse of notation, we write $X \in \cX$ if $X$ is 
an object of $\cX$. Also, we say that $\cX$ contains $\cY$ if $\cY$ is 
a full subcategory of $\cX$.

For any ring $R$, we denote by $R$-$\Mod$ the category of left $R$-modules, 
and by $R$-$\Modfg$ (resp.~$R$-$\Modss$, $R$-$\mod$) the full subcategory 
of finitely generated modules (resp.~of semi-simple modules, of modules of 
finite length).

\section{A construction of hereditary categories}
\label{sec:chc}

\subsection{Two preliminary results}
\label{subsec:prel}

Let $\cC$ be an abelian category. Recall that the 
\emph{homological dimension} of $\cC$  is the smallest non-negative 
integer $n =: \hd(\cC)$  such that $\Ext^{n + 1}_{\cC}(X,Y) = 0$ 
for all $X,Y \in \cC$; equivalently, $\Ext^m_{\cC}(X,Y) = 0$ for
all $X,Y \in \cC$ and all $m > n$. If there is no such integer, then
$\hd(\cC)$ is understood to be infinite. 

Also, recall that $\cC$ is said to be \emph{semi-simple} if $\hd(\cC) = 0$; 
equivalently, every short exact sequence in $\cC$ splits. 
If $\hd(\cC) \leq 1$, then $\cC$ is said to be \emph{hereditary}.

We now record two easy lemmas, for which we could not locate 
appropriate references.

\begin{lemma}\label{lem:right}
The following conditions are equivalent for an abelian category
$\cC$ and a non-negative integer $n$:

\begin{enumerate}

\item[{\rm (i)}] $\hd(\cC) \leq n$.

\item[{\rm (ii)}] The functor $\Ext^n_{\cC}(X,?)$ is right exact 
for any $X \in \cC$.

\item[{\rm (iii)}] The functor $\Ext^n_{\cC}(?,Y)$ is right exact 
for any $Y \in \cC$.

\end{enumerate}

\end{lemma}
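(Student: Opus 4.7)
The plan is to observe (i) $\Rightarrow$ (ii) and (i) $\Rightarrow$ (iii) formally, and then to establish both converses by the same Yoneda splicing argument, applied once in each variable. Right exactness in what follows is understood in the usual sense: a covariant (resp.\ contravariant) additive functor $F$ is right exact if $F(B)\to F(C)\to 0$ (resp.\ $F(B)\to F(A)\to 0$) is exact for every short exact sequence $0\to A\to B\to C\to 0$.

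For (i) $\Rightarrow$ (ii), (iii): if $\hd(\cC)\le n$, then $\Ext^{n+1}_{\cC}$ vanishes in both variables, and applying the long exact Ext sequence associated with any short exact sequence $0\to A\to B\to C\to 0$ immediately gives right exactness of $\Ext^n_{\cC}(X,?)$ and of $\Ext^n_{\cC}(?,Y)$. This direction is purely formal.

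For (ii) $\Rightarrow$ (i), I would proceed by Yoneda splicing. Given an arbitrary class $\xi\in \Ext^{n+1}_{\cC}(X,Y)$, represent it by an $(n+1)$-fold Yoneda extension
\[
0 \to Y \to E_n \to E_{n-1} \to \cdots \to E_0 \to X \to 0,
\]
and set $Z := \im(E_n \to E_{n-1})$. Splitting this extension at the leftmost step yields a short exact sequence $0 \to Y \to E_n \to Z \to 0$ together with an $n$-fold Yoneda extension defining a class $\eta \in \Ext^n_{\cC}(X,Z)$, and by construction $\xi = \delta(\eta)$ where $\delta : \Ext^n_{\cC}(X,Z) \to \Ext^{n+1}_{\cC}(X,Y)$ is the connecting homomorphism. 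Hypothesis (ii) applied to the sequence $0 \to Y \to E_n \to Z \to 0$ forces $\Ext^n_{\cC}(X,E_n) \to \Ext^n_{\cC}(X,Z)$ to be surjective, whence $\delta = 0$ by exactness of the long sequence, so $\xi = 0$. Since $\xi$, $X$, $Y$ were arbitrary, $\hd(\cC)\le n$.

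The implication (iii) $\Rightarrow$ (i) is strictly dual: with the same representative of $\xi$, set $K := \ker(E_0 \to X)$, splice at the rightmost step to obtain a short exact sequence $0 \to K \to E_0 \to X \to 0$ and a class $\eta' \in \Ext^n_{\cC}(K,Y)$ with $\xi = \delta'(\eta')$, and invoke hypothesis (iii) to make $\Ext^n_{\cC}(E_0,Y) \to \Ext^n_{\cC}(K,Y)$ surjective, hence $\delta' = 0$ and $\xi = 0$. I do not anticipate any serious obstacle; the only step requiring minimal care is the Yoneda description of higher Ext classes as iterated connecting images of short exact sequences, which holds in any abelian category and needs no hypothesis on injectives or projectives.
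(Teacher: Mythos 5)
Your proof is correct and takes essentially the same route as the paper: cut the $(n+1)$-fold Yoneda extension at the extreme step, apply the right-exactness hypothesis to the resulting short exact sequence to obtain surjectivity of $\Ext^n_{\cC}(X,E_n)\to\Ext^n_{\cC}(X,Z)$ (resp.\ of $\Ext^n_{\cC}(E_0,Y)\to\Ext^n_{\cC}(K,Y)$), and conclude that the class vanishes. The only difference is presentational: you invoke the connecting homomorphism of the long exact sequence for Yoneda Ext (identified with splicing against the class of the short exact sequence), whereas the paper performs the corresponding lifting and pullback diagram manipulations explicitly.
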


\begin{proof}
(i) $\Rightarrow$ (ii), (i) $\Rightarrow$ (iii) 
Both assertions follow from the vanishing of $\Ext^{n+1}_{\cC}(X,?)$ 
in view of the long exact sequence of Ext groups.

(ii) $\Rightarrow$ (i) 
Let $\xi \in \Ext^{n+1}_{\cC}(X,Y)$ be the class of an exact sequence
\[ 0 \longrightarrow Y \longrightarrow X_{n+1} \longrightarrow \cdots
\longrightarrow X_1 \longrightarrow X \longrightarrow 0 \]
in $\cC$. We cut this sequence in two short exact sequences
\[ 0 \longrightarrow Y \longrightarrow X_{n+1} \longrightarrow Z
\longrightarrow 0,  
\quad 0 \longrightarrow Z \longrightarrow X_n \longrightarrow 
\cdots \longrightarrow X_1 \longrightarrow X \longrightarrow 0 \]
with classes $\xi_1 \in \Ext^1_{\cC}(Z,Y)$,
$\xi_2 \in \Ext^n_{\cC}(X,Z)$ respectively. Then $\xi$ is 
the Yoneda product $\xi_1 \cdot \xi_2$. Since the natural map 
$\Ext^n_{\cC}(X,X_{n+1}) \to \Ext^n_{\cC}(X,Z)$ is surjective, 
there exists a commutative diagram with exact rows
\[ \CD
0 @>>> X_{n+1} @>>> X'_n @>>> \cdots @>>> X'_1 @>>> X @>>> 0 \\ 
& & @VVV @VVV & & @VVV @V{\id}VV \\
0 @>>> Z @>>> X_n @>>> \cdots @>>> X_1 @>>> X @>>> 0. \\
\endCD \]
Also, we have a commutative diagram with exact rows
\[ \CD
0 @>>> Y @>>> X'_{n+1} @>{f'}>> X_{n+1} @>>> 0 \\ 
& & @V{\id}VV @VVV @V{f}VV \\
0 @>>> Y @>>> X_{n+1} @>{f}>> Z @>>> 0, \\ 
\endCD \]
where the top exact sequence is split (as $\id_{X_{n+1}}$ yields a
section of $f'$). Thus, $f^*(\xi_1) = 0$ in $\Ext^1_{\cC}(X_{n+1},Y)$.
By concatenating both diagrams, we obtain a morphism of extensions
\[ \CD
0 @>>> Y @>>> X'_{n+1} @>>> \cdots @>>> X'_1 @>>> X @>>> 0 \\ 
& & @V{\id}VV @VVV & & @VVV @V{\id}VV \\
0 @>>> Y @>>> X_{n+1} @>>> \cdots @>>> X_1 @>>> X @>>> 0. \\
\endCD \]
Thus, $\xi$ is also represented by the top exact sequence, and hence
$\xi = f^*(\xi_1) \cdot \xi_2 = 0$. This completes the proof of
(ii) $\Rightarrow$ (i).

A dual argument shows that (iii) $\Rightarrow$ (i). 
\end{proof}

Next, recall that a subcategory $\cD$ of an abelian
category $\cC$ is said to be a \emph{Serre subcategory}
if $\cD$ is non-empty, strictly full in $\cC$, and stable 
under taking subobjects, quotients and extensions.

\begin{lemma}\label{lem:hereditary}
Let $\cC$ be a hereditary abelian category, and $\cD$ 
a Serre subcategory. Then $\cD$ is hereditary.
\end{lemma}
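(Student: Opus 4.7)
\medskip
\noindent
\textbf{Proof plan.} The plan is to apply Lemma \ref{lem:right} with $n=1$ to the category $\cD$, after noting that for objects already in $\cD$ the $\Ext^1$ groups computed in $\cD$ and in $\cC$ coincide.

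First I would verify the following core identification: for all $X,Y \in \cD$, the canonical map $\Ext^1_{\cD}(X,Y) \to \Ext^1_{\cC}(X,Y)$ is an isomorphism. Via the Yoneda description, any class in $\Ext^1_{\cC}(X,Y)$ is represented by a short exact sequence $0 \to Y \to E \to X \to 0$ in $\cC$; since $\cD$ is a Serre subcategory, hence closed under extensions, the middle term $E$ automatically lies in $\cD$, so the sequence is already a short exact sequence in $\cD$. Equivalences of such sequences are witnessed by morphisms between middle terms, all of which are morphisms in $\cD$ as well. This yields the identification.

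Next I would invoke Lemma \ref{lem:right}. Fix $X \in \cD$ and a short exact sequence $0 \to Y' \to Y \to Y'' \to 0$ in $\cD$; this is also exact in $\cC$. Since $\cC$ is hereditary, Lemma \ref{lem:right}\,(ii) applied to $\cC$ with $n = 1$ gives that $\Ext^1_{\cC}(X,Y) \to \Ext^1_{\cC}(X,Y'')$ is surjective. By the identification above, the analogous map $\Ext^1_{\cD}(X,Y) \to \Ext^1_{\cD}(X,Y'')$ is surjective as well. Combined with the standard long exact sequence of Ext groups in $\cD$ (which exists because $\cD$ is abelian), this shows that $\Ext^1_{\cD}(X,?)$ is right exact. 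Applying the other direction of Lemma \ref{lem:right} to $\cD$ with $n = 1$ then yields $\hd(\cD) \leq 1$, i.e., $\cD$ is hereditary.

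I do not anticipate a genuine obstacle here; the argument is a routine transport of structure. The only step requiring care is the identification of $\Ext^1$ groups, and there the essential ingredient is the closure of $\cD$ under extensions, which is precisely a part of the Serre subcategory hypothesis. Note that no analogous statement for higher Yoneda Ext is needed, because Lemma \ref{lem:right} reduces hereditariness to a property of $\Ext^1$ alone.
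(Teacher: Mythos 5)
Your proposal is correct and follows essentially the same route as the paper: identify $\Ext^1_{\cD}(X,Y)$ with $\Ext^1_{\cC}(X,Y)$ using that $\cD$ is a full subcategory closed under extensions, then conclude via Lemma \ref{lem:right} with $n=1$. You merely spell out the details that the paper's two-line proof leaves implicit.
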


\begin{proof}
In view of the assumption on $\cD$, the natural map 
$\Ext^1_{\cD}(X,Y) \to \Ext^1_{\cC}(X,Y)$ 
is an isomorphism for any $X,Y \in \cD$. So the assertion
follows from Lemma \ref{lem:right}.
\end{proof}

\subsection{Torsion pairs}
\label{subsec:tp}

Throughout this subsection, we consider an abelian category $\cC$
equipped with a \emph{torsion pair}, that is, a pair of strictly full 
subcategories $\cX$, $\cY$ satisfying the following conditions:

\begin{enumerate}

\item[{\rm (i)}] $\Hom_{\cC}(X,Y)= 0$ for all $X \in \cX$, $Y \in \cY$.

\item[{\rm (ii)}] For any $C \in \cC$, there exists an exact sequence
in $\cC$
\begin{equation}\label{eqn:torsion}  
0 \longrightarrow X_C 
\stackrel{f_C}{\longrightarrow} C 
\stackrel{g_C}{\longrightarrow} Y_C 
\longrightarrow 0, 
\end{equation}
where $X_C \in \cX$ and $Y_C \in \cY$.

\end{enumerate}

Then $\cX$ is stable under quotients, extensions and coproducts, and
$\cY$ is stable under subobjects, extensions and products. Moreover,
the assignment $C \mapsto X_C$ extends to an additive functor 
$\bfR : \cC \to \cX$, right adjoint to the inclusion. Dually, the assignment
$C \mapsto Y_C$ extends to an additive functor $\bfL : \cC \to \cY$, 
left adjoint to the inclusion (see e.g.~\cite[Sec.~1.1]{BR} for these
results).

\begin{lemma}\label{lem:exact}
Assume that $\cX$, $\cY$ are Serre subcategories of $\cC$. Then:

\begin{enumerate}

\item[{\rm (i)}] $\bfR$, $\bfL$ are exact. 

\item[{\rm (ii)}] $\Ext^n_{\cC}(X,Y) = 0$ for all $X \in \cX$, 
$Y \in \cY$ and $n \geq 1$.

\item[{\rm (iii)}] $\hd(\cC) \leq \hd(\cX) + \hd(\cY) + 1$.

\end{enumerate}

\end{lemma}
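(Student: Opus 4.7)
The plan is to establish (i) from the Serre hypotheses, deduce (ii) by an inductive Yoneda argument using the exactness of $\bfR$, and prove (iii) by using the two Ext long exact sequences from the torsion decompositions to reduce $\Ext^N_\cC(C, C') = 0$ to four cases. Three of these will be immediate, and the mixed case $\Ext^N_\cC(Y, X)$ with $Y \in \cY$ and $X \in \cX$ will be the main obstacle.

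For (i), the key observation is the identity $X_A = A \cap X_B$ for every monomorphism $A \hookrightarrow B$ in $\cC$: by the Serre hypothesis on $\cX$, $A \cap X_B$ lies in $\cX$ (as a subobject of $X_B$) and is contained in $A$, hence contained in $X_A$; conversely the image of $X_A$ in $B$ is in both $A$ and $X_B$. This forces $Y_A = A/X_A$ to inject into $B/X_B = Y_B$, so $\bfL$ preserves monomorphisms; as it is also right exact (being a left adjoint), it is exact. Exactness of $\bfR$ follows by the dual argument, using that $\cY$ is Serre.

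For (ii), I induct on $n$. For $n = 1$, applying the exact functor $\bfR$ to $0 \to Y \to E \to X \to 0$ gives an isomorphism $X_E \cong X$ (as $\bfR Y = 0$ and $\bfR X = X$), whose inverse splits $E \to X$. For $n \geq 2$, represent $\xi \in \Ext^n_\cC(X, Y)$ as a Yoneda product $\xi_1 \cdot \xi_2$ with $\xi_1 \in \Ext^1_\cC(Z, Y)$ and $\xi_2 \in \Ext^{n-1}_\cC(X, Z)$, and torsion-decompose $Z$. The base case forces $\Ext^1_\cC(X_Z, Y) = 0$, so the long exact sequence gives $\xi_1 = q^* \eta$ for some $\eta \in \Ext^1_\cC(Y_Z, Y)$ (with $q : Z \to Y_Z$); the Yoneda projection formula then yields $\xi = \eta \cdot q_*(\xi_2)$, and $q_*(\xi_2) \in \Ext^{n-1}_\cC(X, Y_Z)$ vanishes by induction.

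For (iii), set $m = \hd(\cX)$, $\ell = \hd(\cY)$ and $N = m + \ell + 2$ (both finite, else the bound is vacuous). First I would record that the exactness of $\bfR$, together with the counit $i_\cX \bfR \to \id$, gives $\Ext^n_\cC(X, X') \cong \Ext^n_\cX(X, X')$ for $X, X' \in \cX$ and any $n$ (the counit provides a morphism of Yoneda extensions between $E_\bullet$ and $i_\cX \bfR E_\bullet$); similarly for $\cY$ via the unit $\id \to i_\cY \bfL$. The Ext long exact sequences associated with the torsion decompositions of $C$ and $C'$ then reduce $\Ext^N_\cC(C, C') = 0$ to the four cases with both arguments in $\cX$ or $\cY$; three vanish immediately, namely $(X, X')$ and $(Y, Y')$ by the Ext comparison together with $N > m,\ell$, and $(X, Y')$ by part (ii). The remaining case $\Ext^N_\cC(Y, X) = 0$ is the technical heart. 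Iteratively for $k = 1, \dots, \ell + 1$, split the current tail of $\xi$ as a Yoneda product $\alpha_k \cdot \beta_k$ with $\alpha_k \in \Ext^1_\cC(N_{k-1}, M_k)$ and $\beta_k \in \Ext^{N-k}_\cC(M_k, X)$ (where $N_0 := Y$, and $N_{k-1} \in \cY$ by construction after the previous iteration); torsion-decompose $M_k$ and use the vanishing $\Ext^{N-k}_\cC(X_{M_k}, X) \cong \Ext^{N-k}_\cX(X_{M_k}, X) = 0$ (from $N - k > m$) to write $\beta_k = q^* \tilde\beta_k$ for some $\tilde\beta_k \in \Ext^{N-k}_\cC(Y_{M_k}, X)$; the projection formula then replaces $\alpha_k$ by $q_*(\alpha_k) \in \Ext^1_\cC(N_{k-1}, N_k)$ with $N_k := Y_{M_k} \in \cY$. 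After $\ell + 1$ iterations the leading product lies in $\Ext^{\ell+1}_\cC(Y, N_{\ell+1}) \cong \Ext^{\ell+1}_\cY(Y, N_{\ell+1}) = 0$ (by the Ext comparison and $\ell + 1 > \ell$), hence $\xi = 0$.
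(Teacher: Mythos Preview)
Your proof is correct and largely parallels the paper's argument. Parts (i) and (ii) follow the same strategy: the key identity $X_A = A \cap X_B$ for monomorphisms in (i), and for (ii) the same base case followed by a Yoneda reduction (the paper applies $\bfR$ directly to the length-$(n-1)$ tail rather than inducting, but the content is identical).

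For (iii), both arguments reduce to the mixed case $\Ext^N_{\cC}(Y,X)$ with $Y\in\cY$, $X\in\cX$, but handle it differently. You iterate $\ell+1$ times, at each step peeling off an $\Ext^1$ factor from the $Y$-end and using $\Ext^{N-k}_{\cC}(X_{M_k},X)=0$ to push the intermediate object into $\cY$, until the accumulated $\Ext^{\ell+1}_{\cC}(Y,N_{\ell+1})$ factor vanishes. The paper instead makes a single cut at position $\ell+1$: writing $\xi$ as the Yoneda product of a class in $\Ext^{m+1}_{\cC}(Z,X)$ and one in $\Ext^{\ell+1}_{\cC}(Y,Z)$, it uses $\Ext^{\ell+1}_{\cC}(Y,\bfL(Z))=0$ to lift the latter to $\Ext^{\ell+1}_{\cC}(Y,\bfR(Z))$, and then the front piece dies since $\Ext^{m+1}_{\cC}(\bfR(Z),X)=0$. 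The paper's route is more economical (one torsion decomposition of the splice object instead of $\ell+1$), while yours makes the comparison $\Ext^n_{\cC}\cong\Ext^n_{\cX}$ between objects of $\cX$ explicit as a standalone tool; both are sound.
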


\begin{proof}
(i) Let $C \in \cC$ and consider a subobject $i : C_1 \hookrightarrow C$. 
Denote by $q: C \to C_2 := C/C_1$ the quotient map, and by
$C_1 \cap X_C$ the kernel of the map 
$(q,g_C) : C \to C_2 \times Y_C$. 
Then $C_1 \cap X_C \hookrightarrow X_C$,
and hence $C_1 \cap  X_C \in \cC$. Moreover, 
$C_1/C_1 \cap X_C \hookrightarrow C/X_C \cong Y_C$,
and hence $C_1/C_1 \cap X_C \in \cY$. Thus, 
$C_1 \cap X_C = \bfR(C_1)$ and $C_1/C_1 \cap X_C = \bfL(C_1)$.
So we obtain a commutative diagram of exact sequences
\[ \CD
0 @>>> \bfR(C_1) @>{\bfR(i)}>> \bfR(C) @>>> \bfR(C)/\bfR(C_1) @>>> 0 \\ 
& & @V{f_{C_1}}VV @V{f_C}VV @V{g}VV \\
0 @>>> C_1 @>{i}>> C @>{q}>> C_2 @>>> 0. \\ 
\endCD \]
As we just showed, the left square is cartesian; it follows that $g$
is a monomorphism. This yields a commutative diagram
with exact rows and columns
\[ \CD
& & 0 & & 0 & & 0 \\
& & @VVV @VVV @VVV \\ 
0 @>>> \bfR(C_1) @>{\bfR(i)}>> \bfR(C) @>>> \bfR(C)/\bfR(C_1) @>>> 0 \\ 
& & @V{f_{C_1}}VV @V{f_C}VV @V{g}VV \\
0 @>>> C_1 @>{i}>> C @>{q}>> C_2 @>>> 0 \\
& & @VVV @VVV @VVV \\
0 @>>> \bfL(C_1) @>>> \bfL(C) @>>> C_2/\Im(g) @>>> 0 \\
& & @VVV @VVV @VVV \\
& & 0 & & 0 & & 0 
\endCD \]
Moreover, $\bfR(C)/\bfR(C_1) \in \cX$ and $C_2/\Im(g) \in \cY$
by our assumption on $\cX$, $\cY$. It follows that 
$\bfR(C)/\bfR(C_1) = \bfR(C_2)$ and $C_2/\Im(g) = \bfL(C_2)$. 
Thus, $\bfR, \bfL$ are exact.

(ii) We first show that $\Ext^1_{\cC}(X,Y) = 0$. 
Consider an exact sequence
\[ 0 \longrightarrow Y \longrightarrow C \longrightarrow X 
\longrightarrow 0 \] 
in $\cC$. Then the induced map
$\bfR(C) \to \bfR(X) = X$ is an isomorphism, since $\bfR$ is exact
and $\bfR(Y) = 0$. Thus, the above exact sequence splits; this 
yields the assertion.

Next, we show the vanishing of any $\xi \in \Ext^n_{\cC}(X,Y)$.
For this, we adapt the argument of Lemma \ref{lem:right}. Choose
a representative of $\xi$ by an exact sequence
\[ 0 \longrightarrow Y \longrightarrow C_n \longrightarrow \cdots
\longrightarrow C_1 \longrightarrow X \longrightarrow 0 \]
in $\cC$ and cut it in two short exact sequences
\[ 0 \longrightarrow Y \longrightarrow C_n \longrightarrow Z
\longrightarrow 0,  
\quad 0 \longrightarrow Z \longrightarrow C_{n-1} \longrightarrow 
\cdots \longrightarrow C_1 \longrightarrow X \longrightarrow 0. \]
This yields an exact sequence
\[ 0 \longrightarrow \bfR(Z) \longrightarrow \bfR(C_{n-1}) 
\longrightarrow \cdots \longrightarrow \bfR(C_1) \longrightarrow 
X \longrightarrow 0. \]
Also, we obtain a commutative diagram of exact sequences
\[ \CD
0 @>>> Y @>>> C'_n @>>> \bfR(Z) @>>> 0 \\ 
& & @V{\id}VV @VVV @VVV \\
0 @>>> Y @>>> C_n @>>> Z @>>> 0, \\ 
\endCD \]
where the top sequence splits by the above step. So $\xi$ is also
represented by the exact sequence
\[ 0 \longrightarrow Y \longrightarrow C'_n \longrightarrow 
\bfR(C_{n-1}) \longrightarrow \cdots
\longrightarrow \bfR(C_1) \longrightarrow X \longrightarrow 0, \]
which has a trivial class in $\Ext^n_{\cC}(X,Y)$.

(iii) We may assume that $\hd(\cX) := m$ and $\hd(\cY) := n$ 
are both finite. In view of the exact sequence (\ref{eqn:torsion}) 
and the long exact sequence for Ext groups, it suffices to show 
that $\Ext^{m + n + 2}_{\cC}(C,C') = 0$ for all $C,C'$ in $\cX$ or $\cY$.
By (ii), this holds whenever $C \in \cX$ and $C' \in \cY$.
Also, if $C,C' \in \cX$, then $\Ext^{m + 1}_{\cC}(C,C') = 0$:
indeed, every exact sequence
\[ 0 \longrightarrow C' \longrightarrow C_{m + 1} \longrightarrow
\cdots \longrightarrow C_1 \longrightarrow C \longrightarrow 0 \]
is Yoneda equivalent to the exact sequence
\[ 0 \longrightarrow C' \longrightarrow \bfR(C_{m + 1}) \longrightarrow
\cdots \longrightarrow \bfR(C_1) \longrightarrow C \longrightarrow 0, \]
which in turn is equivalent to $0$ by assumption.
Likewise, $\Ext^{n+1}_{\cC}(C,C') = 0$ for all $C, C' \in \cY$. 
So we are reduced to checking that $\Ext^{m + n + 2}_{\cC}(Y,X) = 0$
for all $X \in \cX$, $Y \in \cY$.

For this, we adapt again the argument of Lemma \ref{lem:right}.
Let $\xi \in \Ext^{m + n + 2}_{\cC}(Y,X)$ be represented by an exact
sequence 
\[ 0 \longrightarrow X \longrightarrow C_{m + n + 2} 
\longrightarrow \cdots \longrightarrow C_1
\longrightarrow Y \longrightarrow 0 \]
in $\cC$. This yields two exact sequences
\[ 0 \longrightarrow X \longrightarrow C_{m + n + 2} 
\longrightarrow \cdots \longrightarrow C_{n +2}
\longrightarrow Z \longrightarrow 0,\]
\[ 0 \longrightarrow Z \longrightarrow C_{n + 1} 
\longrightarrow \cdots \longrightarrow C_1
\longrightarrow Y \longrightarrow 0. \]
As $\Ext^{n+1}_{\cC}(Y,\bfL(Z)) = 0$ by the above step,
the exact sequence
\[ 0 \longrightarrow \bfR(Z) \longrightarrow Z 
\longrightarrow \bfL(Z) \longrightarrow 0 \]
yields a surjection 
$\Ext^{n+1}_{\cC}(Y,\bfR(Z)) \to \Ext^{n+1}_{\cC}(Y,Z)$.
Thus, there exists a commutative diagram of exact sequences
\[ \CD
0 @>>> \bfR(Z) @>>> C'_{n+1} @>>> \cdots @>>> C'_1 @>>> Y @>>> 0 \\ 
& & @VVV @VVV & & @VVV @V{\id}VV \\
0 @>>> Z @>>> C_{n+1} @>>> \cdots @>>> C_1 @>>> Y @>>> 0. \\
\endCD \]
Also, we have an exact sequence 
\[ 0 \longrightarrow X \longrightarrow \bfR(C_{m + n + 2}) 
\longrightarrow \cdots \longrightarrow \bfR(C_{n+2})
\longrightarrow \bfR(Z) \longrightarrow 0, \]
with trivial class as $\Ext^{m+1}_{\cC}(\bfR(Z),X) = 0$. Hence $\xi$
is also represented by the exact sequence
\[ 0 \to X \to \bfR(C_{m + n + 2}) \to \cdots \to \bfR(C_{n+2})
\to C'_{n+1} \to \cdots \to C'_1 \to Y \to 0, \]
which has a trivial class as well.
\end{proof}

\begin{corollary}\label{cor:hereditary}
Assume that $\cX$ and $\cY$ are semi-simple Serre subcategories of $\cC$. 
Then:

\begin{enumerate}

\item[{\rm (i)}] Every object of $\cX$ is projective in $\cC$.

\item[{\rm (ii)}] Every object of $\cY$ is injective in $\cC$.

\item[{\rm (iii)}] $\cC$ is hereditary. 

\end{enumerate}

\end{corollary}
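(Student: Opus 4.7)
My plan is to extract all three parts almost directly from Lemma~\ref{lem:exact}, supplemented by the Serre-subcategory comparison of $\Ext^1$ used inside the proof of Lemma~\ref{lem:hereditary}.

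For part (iii), since $\cX$ and $\cY$ are semi-simple I have $\hd(\cX) = \hd(\cY) = 0$, so Lemma~\ref{lem:exact}(iii) immediately yields $\hd(\cC) \leq 0 + 0 + 1 = 1$.

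For part (i), I would fix $X \in \cX$ and an arbitrary $C \in \cC$ and show $\Ext^1_{\cC}(X, C) = 0$, which is the projectivity criterion. Applying $\Hom_{\cC}(X, -)$ to the torsion sequence \eqref{eqn:torsion} produces a long exact sequence in which it suffices to kill $\Ext^1_{\cC}(X, X_C)$ and $\Ext^1_{\cC}(X, Y_C)$. The second group vanishes by Lemma~\ref{lem:exact}(ii). For the first, since $\cX$ is a Serre subcategory of $\cC$, the natural map $\Ext^1_{\cX}(X, X_C) \to \Ext^1_{\cC}(X, X_C)$ is an isomorphism (this is exactly the observation used in the proof of Lemma~\ref{lem:hereditary}), and its source is zero by semi-simplicity of $\cX$.

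Part (ii) is dual: for $Y \in \cY$ and $C \in \cC$, I would apply $\Hom_{\cC}(-, Y)$ to the torsion sequence and reduce the vanishing of $\Ext^1_{\cC}(C, Y)$ to that of $\Ext^1_{\cC}(X_C, Y)$ (immediate from Lemma~\ref{lem:exact}(ii)) and $\Ext^1_{\cC}(Y_C, Y)$ (Serre identification with $\Ext^1_{\cY}(Y_C, Y)$, which is zero since $\cY$ is semi-simple). There is no real obstacle here: the corollary is a clean synthesis of the two preceding lemmas. The only point that requires care is invoking the $\Ext^1$-comparison isomorphism for the correct Serre subcategory — $\cX$ for (i) and $\cY$ for (ii) — before appealing to their semi-simplicity.
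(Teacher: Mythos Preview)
Your proposal is correct and follows essentially the same route as the paper: part (iii) is derived directly from Lemma~\ref{lem:exact}(iii), and parts (i) and (ii) are obtained by combining the torsion exact sequence~\eqref{eqn:torsion} with Lemma~\ref{lem:exact}(ii) and the semi-simplicity of $\cX$ and $\cY$. You are in fact slightly more explicit than the paper about why $\Ext^1_{\cC}(X,X')=0$ for $X,X'\in\cX$ (invoking the Serre-subcategory $\Ext^1$ comparison), whereas the paper just says ``by our assumption.''
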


\begin{proof}
(i) Let $X \in \cX$. Then $\Ext^1_{\cC}(X,Y) = 0$ for all $Y \in \cY$,
by Lemma \ref{lem:exact}. Moreover, $\Ext^1_{\cC}(X,X') = 0$ for all 
$X' \in \cX$ by our assumption. In view of the exact sequence 
(\ref{eqn:torsion}), it follows that $\Ext^1_{\cC}(X,C) = 0$ for all 
$C \in \cC$, i.e., $X$ is projective in $\cC$.

(ii) This is checked similarly.

(iii) This follows from Lemma \ref{lem:exact} (iii).

\end{proof}

\subsection{The category of extensions}
\label{subsec:ce}

We still consider an abelian category $\cC$ equipped with
a torsion pair $(\cX,\cY)$.
Let $\cE$ be the category with objects the triples $(X,Y,\xi)$,
where $X \in \cX$, $Y \in \cY$ and $\xi \in \Ext^1_{\cC}(Y,X)$;
the morphisms from $(X,Y,\xi)$ to $(X',Y',\xi')$ are the pairs 
of morphisms $(u : X \to X'$, $v : Y \to Y')$ such that 
$u_*(\xi) = v^*(\xi')$ in $\Ext^1_{\cC}(Y,X')$. We say that
$\cE$ is the \emph{category of extensions} associated with the triple
$(\cC,\cX,\cY)$. 
 
We may assign to any $C \in \cC$, the triple 
\[ \bfT(C) := (\bfR(C),\bfL(C),\xi(C)), \]
where $\xi(C) \in \Ext^1_{\cC}(\bfL(C),\bfR(C))$ denotes the class
of the extension (\ref{eqn:torsion}),
\[  0 \longrightarrow \bfR(C) \longrightarrow C \longrightarrow \bfL(C)
\longrightarrow 0. \]

\begin{lemma}\label{lem:equiv}
Assume that $\cX,\cY$ are Serre subcategories of $\cC$, and 
$\Hom_{\cC}(Y,X) = 0$ for all $X \in \cX$, $Y \in \cY$. Then 
the above assignment extends to a covariant functor $\bfT: \cC \to \cE$, 
which is an equivalence of categories.
\end{lemma}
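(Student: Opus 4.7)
The plan is to verify in turn that $\bfT$ is a well-defined functor, then essentially surjective, full, and faithful. All four steps use only the torsion pair axioms, the exactness of $\bfR, \bfL$ from Lemma \ref{lem:exact}(i), and the hypothesis $\Hom_{\cC}(Y,X) = 0$.

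First, functoriality. Given $f : C \to C'$, apply $\bfR$ and $\bfL$ to the morphism of torsion sequences; this produces a commutative diagram of short exact sequences with vertical maps $\bfR(f), f, \bfL(f)$. The standard interpretation of $\Ext^1$ via Yoneda says exactly that $\bfR(f)_*(\xi(C)) = \bfL(f)^*(\xi(C'))$ in $\Ext^1_{\cC}(\bfL(C),\bfR(C'))$, so the pair $(\bfR(f),\bfL(f))$ is a morphism in $\cE$. Composition and identities are clearly preserved.

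Next, essential surjectivity. Given $(X,Y,\xi) \in \cE$, pick any representative extension $0 \to X \to C \to Y \to 0$ of $\xi$. The inclusion $X \hookrightarrow C$ factors through $\bfR(C) \hookrightarrow C$ (since $X \in \cX$ and $\bfR$ is right adjoint to the inclusion), giving $X \subseteq \bfR(C)$. Conversely, the composition $\bfR(C) \hookrightarrow C \twoheadrightarrow Y$ lies in $\Hom_{\cC}(\cX,\cY) = 0$, so $\bfR(C) \subseteq X$. Thus $\bfR(C) = X$ and $\bfL(C) = Y$, and the class $\xi(C)$ coincides with $\xi$, so $\bfT(C) = (X,Y,\xi)$.

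For faithfulness I use the new hypothesis. Suppose $f : C \to C'$ satisfies $\bfR(f) = 0$ and $\bfL(f) = 0$. From $\bfR(f) = 0$ the composite $\bfR(C) \to C \xrightarrow{f} C'$ vanishes, so $f$ factors as $C \twoheadrightarrow \bfL(C) \xrightarrow{\bar f} C'$. From $\bfL(f) = 0$ the composite $\bfL(C) \xrightarrow{\bar f} C' \twoheadrightarrow \bfL(C')$ vanishes, so $\bar f$ factors through $\bfR(C') \hookrightarrow C'$, yielding a map $\bfL(C) \to \bfR(C')$, which vanishes by the assumption $\Hom_{\cC}(Y,X) = 0$. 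Hence $f = 0$.

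Finally, fullness, which I expect to be the main obstacle. Given $(u,v) : \bfT(C) \to \bfT(C')$, the equation $u_*(\xi(C)) = v^*(\xi(C'))$ provides an equivalence between the pushout extension $u_*[\xi(C)]$ and the pullback extension $v^*[\xi(C')]$. Unwinding this via the universal properties of pushout and pullback produces a morphism $f : C \to C'$ sitting in a commutative diagram
\[
\CD
0 @>>> \bfR(C) @>>> C @>>> \bfL(C) @>>> 0 \\
@. @V{u}VV @V{f}VV @V{v}VV @. \\
0 @>>> \bfR(C') @>>> C' @>>> \bfL(C') @>>> 0,
\endCD
\]
so that $\bfR(f) = u$ and $\bfL(f) = v$, i.e., $\bfT(f) = (u,v)$. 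The lift $f$ is a priori only unique up to an element of $\Hom_{\cC}(\bfL(C),\bfR(C'))$, but this group vanishes by hypothesis, which matches the faithfulness already established. Combining these four steps, $\bfT$ is an equivalence.
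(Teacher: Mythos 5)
Your proof is correct and follows essentially the same route as the paper: the same factorization argument for faithfulness, the same pushout--pullback splicing for fullness, and essential surjectivity from the torsion pair (which you spell out in more detail than the paper's one-line remark). The only cosmetic difference is that you cite as standard the fact that a morphism of extensions forces $\bfR(f)_*(\xi(C)) = \bfL(f)^*(\xi(C'))$, whereas the paper verifies it explicitly with an intermediate extension $D$.
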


\begin{proof}
Consider a morphism $f : C \to C'$ in $\cC$. Then $f$ lies in 
a unique commutative diagram of exact sequences
\[ \CD 
0 @>>> X_C @>{f_C}>> C @>{g_C}>> Y_C @>>> 0 \\ 
& & @V{u}VV @V{f}VV @V{v}VV \\
0 @>>> X_{C'} @>>>  C' @>>> Y_{C'} @>>> 0. \\
\endCD \]
Denote by $\xi \in \Ext^1_{\cC}(Y_C,X_C)$, 
$\xi' \in \Ext^1_{\cC}(Y_{C'},X_{C'})$ the classes of the above 
extensions and set $u =: \bfR(f)$, $v =: \bfL(f)$. These fit 
into a commutative diagram with exact rows
\[ \CD 
0 @>>> \bfR(C) @>>> C @>>> \bfL(C) @>>> 0 \\ 
& & @V{\bfR(f)}VV @VVV @V{\id}VV \\
0 @>>> \bfR(C') @>>> D @>>> \bfL(C) @>>> 0 \\
& & @V{\id}VV @VVV @V{\bfL(f)}VV \\
0 @>>> \bfR(C') @>>> C' @>>> \bfL(C') @>>> 0 \\ 
\endCD \]
It follows that $\bfR(f)_*(\xi) = \bfL(f)^*(\xi')$.
Thus, the assignment $f \mapsto (\bfR(f), \bfL(f))$ defines 
the desired covariant functor $\bfT$. We now show that $\bfT$ 
is an equivalence of categories. 

Since $(\cX,\cY)$ is a torsion pair, $\bfT$ is essentially surjective.
We check that it is faithful. Let $C,C' \in \cC$ and consider
$f \in \Hom_{\cC}(C,C')$ such that $\bfR(f) = 0 = \bfL(f)$. 
Then the composition 
$X_C \stackrel{f_C}{\longrightarrow} C 
\stackrel{f}{\longrightarrow} C'$ is zero, and hence $f$ factors
through $g : Y_C \to C'$. Moreover, the composition
$Y_C \stackrel{g}{\longrightarrow} C' 
\stackrel{g_{C'}}{\longrightarrow} Y_{C'}$ is zero, and hence
$g$ factors through $h : Y_C \to X_{C'}$. By our assumption,
$h = 0$; thus, $f = 0$.

Finally, we show that $\bfT$ is full. Let again $C,C' \in \cC$ 
and consider $u : X_C \to X_{C'}$, $v : Y_C \to Y_{C'}$ such that
$u_*(\xi) = v^*(\xi')$, where $\xi$ (resp.~$\xi'$) denotes the class of
the extension (\ref{eqn:torsion}) for $C$ (resp.~$C'$).
Since $\Hom_{\cC}(Y,X) = 0$ for all $X \in \cX$ and $Y \in \cY$, 
these extensions are uniquely determined by their classes, and
in turn by $C$, $C'$. Thus, we have a commutative diagram of
extensions in $\cC$
\[ \CD 
0 @>>> X_C @>{f_C}>> C @>{g_C}>> Y_C @>>> 0 \\ 
& & @V{u}VV @VVV @V{\id}VV \\
0 @>>> X_{C'} @>{i}>> D @>{q}>> Y_C @>>> 0 \\
& & @V{\id}VV @VVV @V{v}VV \\
0 @>>> X_{C'} @>{f_{C'}}>> C' @>{g_{C'}}>> Y_{C'} @>>> 0. \\ 
\endCD \]
This yields a morphism $f : C \to C'$ such that $\bfR(f) = u$
and $\bfL(f) = v$.
\end{proof}

With the assumptions of Lemma \ref{lem:equiv}, the subcategory 
$\cX$ (resp.~$\cY$) of $\cC$ is identified via $\bfT$ with 
the full subcategory of $\cE$ with objects the triples of the 
form $(X,0,0)$ (resp.~$(0,Y,0)$). Assuming in addition that 
\emph{$\cX$ and $\cY$ are semi-simple}, we now obtain 
a description of homomorphism and extension groups in $\cE$:

\begin{proposition}\label{prop:homext}
With the above assumptions, there is an exact sequence
\[ 0 \longrightarrow \Hom_{\cE}(Z,Z') 
\stackrel{\iota}{\longrightarrow}
\Hom_{\cX}(X,X') \times \Hom_{\cY}(Y,Y') \longrightarrow \]
\[ \stackrel{\varphi}{\longrightarrow}
\Ext^1_{\cC}(Y,X') \longrightarrow \Ext^1_{\cE}(Z,Z')
\longrightarrow 0 \]
for any $Z = (X,Y,\xi), Z' = (X',Y',\xi') \in \cE$, 
where $\iota$ denotes the inclusion, and 
$\varphi(u,v) := u_*(\xi') - v^*(\xi)$.
\end{proposition}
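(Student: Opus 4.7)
The plan is to transport everything across the equivalence $\bfT : \cC \to \cE$ of Lemma \ref{lem:equiv}, so that the Hom and $\Ext^1$ groups in $\cE$ become Hom and $\Ext^1$ groups in $\cC$. Fix $C, C' \in \cC$ with $\bfT(C) = Z$, $\bfT(C') = Z'$, so that $C, C'$ sit in the extensions (\ref{eqn:torsion}) with classes $\xi, \xi'$. Since $\bfT$ is an equivalence of abelian categories, $\Hom_{\cE}(Z,Z') \cong \Hom_{\cC}(C,C')$ and $\Ext^1_{\cE}(Z,Z') \cong \Ext^1_{\cC}(C,C')$, so it suffices to prove exactness of the analogous sequence in $\cC$.

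The vanishings
\[ \Hom_{\cC}(X,Y') = \Hom_{\cC}(Y,X') = 0, \quad \Ext^1_{\cC}(X,X') = \Ext^1_{\cC}(Y,Y') = 0, \quad \Ext^1_{\cC}(X,Y') = 0 \]
will collapse the long exact sequences: the first pair are the torsion-pair axiom and the extra hypothesis of Lemma \ref{lem:equiv}; the two middle follow from the semi-simplicity of $\cX,\cY$ together with the Serre property (any short exact sequence $0 \to X' \to E \to X \to 0$ in $\cC$ with $X,X' \in \cX$ has $E \in \cX$, hence splits); and the last is Lemma \ref{lem:exact}(ii).

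Applying $\Hom_{\cC}(X,-)$ and $\Hom_{\cC}(Y,-)$ to $0 \to X' \to C' \to Y' \to 0$ produces $\Hom(X,C') \cong \Hom(X,X')$, $\Ext^1(X,C') = 0$, and the four-term exact sequence
\[ 0 \to \Hom(Y,C') \to \Hom(Y,Y') \xrightarrow{v \mapsto v^*(\xi')} \Ext^1(Y,X') \xrightarrow{(i_{X'})_*} \Ext^1(Y,C') \to 0. \]
Substituting these identifications into the long exact sequence coming from $\Hom_{\cC}(-,C')$ applied to $0 \to X \to C \to Y \to 0$ yields
\[ 0 \to \Hom(Y,C') \to \Hom(C,C') \to \Hom(X,X') \xrightarrow{u \mapsto (i_{X'})_*(u_*(\xi))} \Ext^1(Y,C') \xrightarrow{g_C^*} \Ext^1(C,C') \to 0. \]
I would then set $\beta := g_C^* \circ (i_{X'})_* : \Ext^1(Y,X') \to \Ext^1(C,C')$, which is surjective as a composite of the two surjections just displayed, and verify $\ker\beta = \Im\varphi = \{u_*(\xi) + v^*(\xi') \mid u,v\}$ by a diagram chase through the two sequences. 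Exactness at the first two terms of the proposition is then immediate from Lemma \ref{lem:equiv}: faithfulness of $\bfT$ gives injectivity of $\iota$, while fullness combined with the defining condition $u_*(\xi) = v^*(\xi')$ for morphisms in $\cE$ identifies the image of $\iota$ with $\ker\varphi$.

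The principal obstacle is the bookkeeping for the connecting homomorphisms: one has to check that the middle map of the assembled sequence is exactly $\varphi(u,v) = u_*(\xi) - v^*(\xi')$ and that the final surjection is the composite $\beta$ described above. Once these identifications are pinned down, every exactness claim reduces to a short diagram chase using the vanishings listed above.
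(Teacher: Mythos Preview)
Your proof is correct and follows essentially the same strategy as the paper's: both arguments splice together the two long exact sequences obtained by applying $\Hom(-,Z')$ to $0\to X\to Z\to Y\to 0$ and $\Hom(Y,-)$ to $0\to X'\to Z'\to Y'\to 0$, using the vanishings $\Ext^1(X,Z')=0$ (equivalently, projectivity of $X$ from Corollary~\ref{cor:hereditary}) and $\Ext^1(Y,Y')=0$ to collapse the tails. The only cosmetic difference is that you transport everything to $\cC$ via $\bfT$ before computing, whereas the paper works directly in $\cE$; since $\bfT$ is an equivalence this is immaterial.
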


\begin{proof}
We have $\Ker(\iota) = 0$ and $\Im(\iota) = \Ker(\varphi)$ 
by the definition of the morphisms in $\cE$. Thus, it suffices 
to check that $\Coker(\varphi) \cong \Ext^1_{\cE}(Z,Z')$.

Consider the exact sequence 
\[ 0 \longrightarrow X' \longrightarrow Z' \longrightarrow Y'
\longrightarrow 0 \]
in $\cE$, with class $\xi' \in \Ext^1_{\cE}(Y',X') = \Ext^1_{\cC}(Y',X')$. 
This yields an exact sequence
\[ \Hom_{\cE}(Y,Y') \stackrel{\partial'}{\longrightarrow}
\Ext^1_{\cE}(Y,X') \longrightarrow
\Ext^1_{\cE}(Y,Z') \longrightarrow \Ext^1_{\cE}(Y,Y'), \]
where $\partial'(v) := v^*(\xi')$ for any 
$v \in \Hom_{\cE}(Y,Y')$. Moreover, since $\cY$ is a semi-simple Serre
subcategory of $\cE$, we have $\Hom_{\cE}(Y,Y') = \Hom_{\cY}(Y,Y')$ and
$\Ext^1_{\cE}(Y,Y') = 0$. So we obtain a natural isomorphism
\begin{equation}\label{eqn:YZ'} 
\Ext^1_{\cE}(Y,Z') \cong \Ext^1_{\cE}(Y,X')/
\{ v^*(\xi') ~\vert~ v \in \Hom_{\cY}(Y,Y') \}. 
\end{equation}
Similarly, the exact sequence
\[ 0 \longrightarrow X \longrightarrow Z \longrightarrow Y
\longrightarrow 0 \]
in $\cE$, with class $\xi \in \Ext^1_{\cC}(Y,X)$,  
yields an exact sequence
\[ \Hom_{\cE}(X,Z') \stackrel{\partial}{\longrightarrow}
\Ext^1_{\cE}(Y,Z') \longrightarrow
\Ext^1_{\cE}(Z,Z') \longrightarrow \Ext^1_{\cE}(X,Z'), \]
where $\partial(u) := u_*(\xi)$ for any 
$u \in \Hom_{\cE}(X,Z')$. Moreover, the natural map
\[ \Hom_{\cX}(X,X') \longrightarrow \Hom_{\cE}(X,Z') \] 
is an isomorphism, since $\Hom_{\cE}(X,Y') = 0$. Also,
$\Ext^1_{\cE}(X,Z') = 0$ by Corollary \ref{cor:hereditary}. 
Hence we obtain a natural isomorphism 
\begin{equation}\label{eqn:ZZ'} 
\Ext^1_{\cE}(Z,Z') \cong \Ext^1_{\cE}(Y,Z')/
\{ u_*(\xi) ~\vert~ u \in \Hom_{\cX}(X,X') \}.
\end{equation}
Putting together the isomorphisms (\ref{eqn:YZ'}) and (\ref{eqn:ZZ'})
yields the desired assertion.
\end{proof}

\subsection{Universal extensions}
\label{subsec:ue}

We still consider an abelian category $\cC$ equipped with
a torsion pair $(\cX,\cY)$, and make the following assumptions:

\begin{enumerate}

\item[(a)] $\cX, \cY$ are Serre subcategories of $\cC$.

\item[(b)] $\cX, \cY$ are semi-simple.

\item[(c)] $\Hom_{\cC}(Y,X) = 0$ for all $X \in \cX$, $Y \in \cY$.

\item[(d)] There exists a covariant exact functor 
$\bfF : \cY \to \tcX$, where $\tcX$ is a semi-simple abelian 
category containing $\cX$ as a Serre subcategory, and a 
bi-functorial isomorphism
\begin{equation}\label{eqn:iso} 
\Ext^1_{\cC}(Y,X) \stackrel{\cong}{\longrightarrow} 
\Hom_{\tcX}(\bfF(Y),X) 
\end{equation}
for all $X \in \cX$, $Y \in \cY$.

\end{enumerate}

\begin{remark}\label{rem:ue}
The above assumptions are satisfied by the isogeny category of 
algebraic groups and some natural Serre subcategories, as we will 
see in Subsection \ref{subsec:mic}. 
Also, assumptions (a), (b) and (c) are just those of 
Corollary \ref{cor:hereditary} and Proposition \ref{prop:homext}. 
Note that a weak version of (d) always holds, where we only require  
$\tcX$ to be a category containing $\cX$: take $\tcX$ 
to be the opposite category of covariant functors from $\cX$ 
to sets; then the functor $\Ext^1_{\cC}(Y,?)$ is an object
of $\tcX$ for any $Y \in \cY$, and the isomorphism
(\ref{eqn:iso}) follows from Yoneda's lemma. But requiring $\tcX$ 
to be abelian and semi-simple is a restrictive assumption.
\end{remark}

Under the four above assumptions, every $C \in \cC$ defines
an extension class $\xi(C) \in \Ext^1_{\cC}(Y_C,X_C)$, and in turn
a morphism $\eta(C) \in \Hom_{\tcX}(\bfF(Y_C),X_C)$. Moreover, 
every morphism $f : C \to C'$ in $\cC$ induces morphisms
$u : X_C \to X_{C'}$, $v : Y_C \to Y_{C'}$ such that the push-forward
$u_* \xi(C) \in \Ext^1_{\cC}(Y,X')$ is identified with
$u \circ \eta(C)  \in \Hom_{\tcX}(\bfF(Y_C),X_{C'})$,
and the pull-back $v^* \xi(C') \in \Ext^1_{\cC}(Y,X')$ is identified with
$\eta(C') \circ F(v)  \in \Hom_{\tcX}(\bfF(Y_C),X_{C'})$. 

It follows that the category of extensions $\cE$ (considered in Subsection
\ref{subsec:ce}) is equivalent to the category $\cF$ with objects 
the triples $(X,Y,\eta)$, where $X \in \cX$, $Y \in \cY$ and 
$\eta \in \Hom_{\tcX}(\bfF(Y),X)$; the morphisms from
$(X,Y,\eta)$ to $(X',Y',\eta')$ are the pairs of
morphisms ($u : X \to X'$, $v: Y \to Y'$) such that
the diagram
\[ \CD
\bfF(Y) @>{F(v)}>> \bfF(Y') \\
@V{\eta}VV @V{\eta'}VV \\
X @>{u}>> X' \\
\endCD \]
commutes. With this notation, Lemma \ref{lem:equiv} yields readily:

\begin{lemma}\label{lem:equiv2}
The assignment $C \mapsto (\bfR(C),\bfL(C),\eta(C))$ extends 
to an equivalence of categories $\cC \stackrel{\cong}{\to} \cF$.
\end{lemma}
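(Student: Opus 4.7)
The plan is to reduce to Lemma \ref{lem:equiv}, which already provides an equivalence $\bfT : \cC \to \cE$ under assumptions (a) and (c). It therefore suffices to construct an equivalence $\bfS : \cE \to \cF$ sending $(X,Y,\xi)$ to $(X,Y,\eta)$, where $\eta \in \Hom_{\tcX}(\bfF(Y),X)$ corresponds to $\xi$ under the bi-functorial isomorphism (\ref{eqn:iso}). The desired equivalence $\cC \to \cF$ is then the composition $\bfS \circ \bfT$, which sends $C$ to $(\bfR(C),\bfL(C),\eta(C))$ by construction.

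To define $\bfS$ on morphisms, I would use the bi-functoriality of (\ref{eqn:iso}): for $u : X \to X'$ and $v : Y \to Y'$, the isomorphism translates the pushforward $u_*\xi \in \Ext^1_{\cC}(Y,X')$ into $u \circ \eta$, and the pullback $v^*\xi' \in \Ext^1_{\cC}(Y,X')$ into $\eta' \circ \bfF(v)$. Hence the morphism condition in $\cE$, namely $u_*(\xi) = v^*(\xi')$, is equivalent to the commutativity of the square $u \circ \eta = \eta' \circ \bfF(v)$, which is precisely the morphism condition in $\cF$. Thus $\bfS$ is well-defined, faithful, and full on morphisms, with $\bfS$ acting as the identity on the pair $(u,v)$.

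For essential surjectivity of $\bfS$, given any triple $(X,Y,\eta) \in \cF$, the isomorphism (\ref{eqn:iso}) produces a unique $\xi \in \Ext^1_{\cC}(Y,X)$ corresponding to $\eta$, and the triple $(X,Y,\xi) \in \cE$ maps to $(X,Y,\eta)$. Since $\bfT$ is already essentially surjective by Lemma \ref{lem:equiv}, the composition $\bfS \circ \bfT$ is essentially surjective as well. Combined with full faithfulness, this establishes the equivalence $\cC \stackrel{\cong}{\to} \cF$.

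The only genuine step of substance is checking the translation of the compatibility condition $u_*(\xi) = v^*(\xi')$ into $u \circ \eta = \eta' \circ \bfF(v)$; this is immediate from bi-functoriality of the isomorphism in (d), but it is the one point where both the covariance in the first variable and the contravariance in the second must be used simultaneously. No further obstacle arises, since assumptions (a)--(c) already supplied the equivalence $\bfT$ and the assumption that $\bfF$ is a covariant functor ensures that the diagram in the definition of $\cF$ makes sense.
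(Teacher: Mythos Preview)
Your proposal is correct and follows essentially the same approach as the paper: the paper observes in the paragraph preceding Lemma~\ref{lem:equiv2} that the bi-functorial isomorphism (\ref{eqn:iso}) yields an equivalence $\cE \cong \cF$, and then simply states that Lemma~\ref{lem:equiv} ``yields readily'' the result. You have spelled out the translation of the morphism condition $u_*(\xi) = v^*(\xi')$ into $u \circ \eta = \eta' \circ \bfF(v)$ more explicitly than the paper does, but this is exactly the content of the paper's remark that $\cE$ and $\cF$ are equivalent via (\ref{eqn:iso}).
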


Next, consider the category $\tcF$ with objects the triples
$(X,Y,\eta)$, where $X \in \tcX$, $Y \in \cY$ and 
$\eta \in \Hom_{\tcX}(\bfF(Y), X)$; the morphisms are defined
like those of $\cF$. Then one may readily check the following:

\begin{lemma}\label{lem:tilde}
With the above notation, $\cF$ is a Serre subcategory of $\tcF$.
Moreover, the triple $(\tcF,\tcX,\cY)$ satisfies the assumptions 
{\rm (a)}, {\rm (b)}, {\rm (c)}, and {\rm (d)} with the same 
functor $\bfF$.
For any $Z = (X,Y,\eta)$ and $Z' = (X',Y',\eta') \in \tcF$,
we have an exact sequence  
\[ 0 \longrightarrow \Hom_{\tcF}(Z,Z') \stackrel{\iota}{\longrightarrow}
\Hom_{\cX}(X,X') \times \Hom_{\cY}(Y,Y') \longrightarrow \]
\[ \stackrel{\psi}{\longrightarrow}
\Hom_{\tcX}(\tX,X') \longrightarrow \Ext^1_{\tcF}(Z,Z') \longrightarrow 0, \]
where $\tX := \bfF(Y)$ and 
$\psi(u,v) := u \circ \eta - \eta' \circ \bfF(v)$. 
\end{lemma}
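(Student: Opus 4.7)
The plan is to verify the three assertions in turn, reducing the exact sequence at the end to a direct application of Proposition \ref{prop:homext} once properties (a)--(d) are established for the triple $(\tcF, \tcX, \cY)$. First, I would observe that $\tcF$ is abelian with kernels and cokernels constructed componentwise: for a morphism $(u,v) : (X, Y, \eta) \to (X', Y', \eta')$, exactness of $\bfF$ identifies $\bfF(\Ker v)$ with $\Ker \bfF(v)$, so that $\eta$ restricted to this subobject factors uniquely through $\Ker u$, giving the kernel $(\Ker u, \Ker v, \eta_K)$; cokernels are obtained dually. Since $\cF \subset \tcF$ is cut out by the condition $X \in \cX$, and $\cX$ is a Serre subcategory of $\tcX$, it follows that $\cF$ is stable under subobjects, quotients, and extensions in $\tcF$, hence a Serre subcategory.

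Next, I would verify (a)--(d) for $(\tcF, \tcX, \cY)$. The embeddings $X \mapsto (X, 0, 0)$ and $Y \mapsto (0, Y, 0)$ are fully faithful, identifying $\tcX$ and $\cY$ with strictly full subcategories of $\tcF$. Every object $(X, Y, \eta) \in \tcF$ sits in a canonical short exact sequence
\[ 0 \longrightarrow (X, 0, 0) \longrightarrow (X, Y, \eta) \longrightarrow (0, Y, 0) \longrightarrow 0, \]
realizing $(\tcX, \cY)$ as a torsion pair on $\tcF$ with $\bfR(X, Y, \eta) = (X, 0, 0)$ and $\bfL(X, Y, \eta) = (0, Y, 0)$; both are Serre subcategories and remain semi-simple inside $\tcF$, giving (a) and (b). For (c), a morphism $(0, Y, 0) \to (X, 0, 0)$ is a pair with both components forced to be zero. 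For (d), an extension of $(0, Y, 0)$ by $(X, 0, 0)$ in $\tcF$ necessarily has middle object of the form $(X, Y, \eta)$ with some $\eta \in \Hom_{\tcX}(\bfF(Y), X)$, because the induced short exact sequences on the $\tcX$- and $\cY$-components are forced to be the split ones $0 \to X \to X \to 0 \to 0$ and $0 \to 0 \to Y \to Y \to 0$; moreover two such extensions are equivalent via an isomorphism restricting to the identities on $X$ and $Y$ iff the corresponding $\eta$'s coincide. This gives the required bifunctorial isomorphism $\Ext^1_{\tcF}((0, Y, 0), (X, 0, 0)) \cong \Hom_{\tcX}(\bfF(Y), X)$.

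With (a)--(d) in hand for $(\tcF, \tcX, \cY)$, Proposition \ref{prop:homext} applied to this triple yields the exact sequence
\[ 0 \to \Hom_{\tcF}(Z, Z') \to \Hom_{\tcX}(X, X') \times \Hom_{\cY}(Y, Y') \stackrel{\varphi}{\to} \Ext^1_{\tcF}(Y, X') \to \Ext^1_{\tcF}(Z, Z') \to 0, \]
where $\varphi(u, v) = u_*(\xi) - v^*(\xi')$ and $\xi$, $\xi'$ denote the classes of the canonical extensions defining $Z$ and $Z'$. Transporting $\Ext^1_{\tcF}(Y, X')$ across the isomorphism of (d), the pushforward $u_*(\xi)$ corresponds to $u \circ \eta$ and the pullback $v^*(\xi')$ corresponds to $\eta' \circ \bfF(v)$, so $\varphi$ is identified with $\psi$, yielding the desired sequence. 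The principal obstacle is the careful classification in (d) of extensions in the somewhat formal category $\tcF$, which relies crucially on the triviality of the induced extensions in each component; everything else is a mechanical transfer of the results of Subsections \ref{subsec:tp} and \ref{subsec:ce} to the enlarged triple.
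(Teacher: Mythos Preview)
Your proposal is correct and is precisely the natural elaboration of what the paper intends: the paper gives no proof beyond ``one may readily check the following,'' and your argument---verifying that $\tcF$ is abelian with componentwise kernels and cokernels, that $(\tcX,\cY)$ is a torsion pair of semi-simple Serre subcategories satisfying (a)--(d), and then invoking Proposition~\ref{prop:homext} together with the identification $\Ext^1_{\tcF}(Y,X') \cong \Hom_{\tcX}(\bfF(Y),X')$ spelled out just before Lemma~\ref{lem:equiv2}---is exactly the intended verification. The only cosmetic point is that the paper's statement writes $\Hom_{\cX}(X,X')$ where $\Hom_{\tcX}(X,X')$ is meant (as you correctly use), since $X,X'$ lie in $\tcX$.
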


We now consider the covariant exact functors like in Lemma 
\ref{lem:exact}:
\[ \tbfR : \tcF \longrightarrow \tcX, \quad 
(X,Y,\eta) \longmapsto X, \quad (u,v) \longmapsto u, \]
\[ \tbfL : \tcF \longrightarrow \cY, \quad
(X,Y,\eta) \longmapsto Y, \quad (u,v) \longmapsto v. \]

\begin{lemma}\label{lem:adjoint} 
With the above notation, the assignment 
$Y \mapsto (\bfF(Y),Y, \id_{\bfF(Y)})$ extends to a covariant
exact functor $\bfE : \cY \to \tcX$, which is left adjoint
to $\tbfL$. 
\end{lemma}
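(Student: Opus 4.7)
The plan is to define $\bfE$ on morphisms by the obvious pair $(\bfF(g), g)$, establish the adjunction first via a direct Hom-set calculation (which is very short), and then obtain exactness from the adjunction together with a componentwise description of kernels in the target.

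First, for a morphism $g : Y \to Y'$ in $\cY$, I set $\bfE(g) := (\bfF(g), g)$. The compatibility square required for this to be a morphism $(\bfF(Y), Y, \id_{\bfF(Y)}) \to (\bfF(Y'), Y', \id_{\bfF(Y')})$ reads $\bfF(g) \circ \id_{\bfF(Y)} = \id_{\bfF(Y')} \circ \bfF(g)$, which is automatic. Composition and identities are preserved by functoriality of $\bfF$, so $\bfE$ is a well-defined covariant functor.

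For the adjunction, fix $Y \in \cY$ and an object $Z' = (X', Y', \eta')$ in the target of $\bfE$. A morphism $\bfE(Y) \to Z'$ is a pair $(u, v)$ with $u : \bfF(Y) \to X'$ in $\tcX$ and $v : Y \to Y'$ in $\cY$ satisfying the compatibility square of Lemma \ref{lem:tilde}, which here reduces to $u \circ \id_{\bfF(Y)} = \eta' \circ \bfF(v)$, i.e.\ $u = \eta' \circ \bfF(v)$. Thus $v$ alone determines $(u,v)$, and the assignment $(u,v) \mapsto v$ gives a bijection
\[ \Hom(\bfE(Y), Z') \stackrel{\cong}{\longrightarrow} \Hom_{\cY}(Y, \tbfL(Z')), \]
which is natural in $Y$ (by functoriality of $\bfF$) and in $Z'$ (by the definition of morphisms in the target). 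This is precisely the unit/counit description of the adjunction $\bfE \dashv \tbfL$.

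For exactness, $\bfE$ is right exact because it is a left adjoint. Left exactness I would verify via a componentwise description of kernels: for a morphism $(u,v) : (X, Y, \eta) \to (X', Y', \eta')$ in the target, its kernel is $(\Ker u, \Ker v, \eta_K)$, where $\eta_K : \bfF(\Ker v) \to \Ker u$ is the morphism induced by the commuting square together with the identification $\bfF(\Ker v) = \Ker \bfF(v)$ provided by exactness of $\bfF$. Applied to a short exact sequence $0 \to Y_1 \to Y_2 \to Y_3 \to 0$ in $\cY$, both component sequences of the image under $\bfE$ are exact (the $\tcX$-component by hypothesis (d), the $\cY$-component tautologically), so the sequence itself is exact. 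The only mildly delicate step is the componentwise formation of kernels, which depends essentially on exactness of $\bfF$; once that is recorded, functoriality, exactness, and the adjunction all reduce to straightforward diagram chases.
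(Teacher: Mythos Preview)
Your proof is correct and follows essentially the same approach as the paper: define $\bfE(g) = (\bfF(g), g)$, verify the adjunction by the same Hom-set calculation (the condition $u = \eta' \circ \bfF(v)$ forces $u$ from $v$), and deduce exactness componentwise from the exactness of $\bfF$. The only cosmetic differences are that the paper checks exactness before the adjunction and does so in one line (``As $\bfF$ is a covariant exact functor, so is $\bfE$''), whereas you supply the justification via the componentwise description of kernels; your detour through ``left adjoint $\Rightarrow$ right exact'' is harmless but redundant, since your componentwise argument already handles exactness on both sides.
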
 

\begin{proof}
For any morphism $v : Y_1 \to Y_2$ in $\cY$, the induced 
morphism $\bfF(v): \bfF(Y_1) \to \bfF(Y_2)$ satisfies
$(\bfF(v),v) \in \Hom_{\tcF}(\bfE(Y_1),\bfE(Y_2))$
by the definition of the morphisms in $\tcF$. We may thus
set $\bfE(v) := (\bfF(v),v)$. As $\bfF$ is a covariant exact
functor, so is $\bfE$. We now check the adjunction assertion: 
let $Y \in \cY$ and $(X,Y',\eta) \in \tcF$. Then 
$\Hom_{\tcF}(\bfE(Y),(X,Y',\eta))$ consists of the pairs
($u \in \Hom_{\tcX}(\bfF(Y),X)$, $v \in \Hom_{\cY}(Y,Y')$)
such that $u = \eta \circ F(v)$. Thus, the map induced by
$\tbfL$, 
\[ \Hom_{\tcF}(\bfE(Y),(X,Y',\eta)) \longrightarrow
\Hom_{\cY}(Y,Y'), \quad (u,v) \longmapsto v \]
is an isomorphism.
\end{proof}

For any $Y \in \cY$, there is a tautological exact sequence 
\begin{equation}\label{eqn:univ} 
0 \longrightarrow \bfF(Y) \stackrel{\iota}{\longrightarrow}
\bfE(Y) \stackrel{\pi}{\longrightarrow} Y \longrightarrow 0 
\end{equation}
in $\tcF$, which is universal in the following sense:

\begin{proposition}\label{prop:universal}
Let $Z = (X,Y,\eta) \in \tcF$.

\begin{enumerate}

\item[{\rm (i)}] There exists a unique morphism 
$\mu : \bfE(Y) \to Z$ in $\tcF$ such that the 
diagram
\[ \CD
0 @>>> \bfF(Y) @>{\iota}>> \bfE(Y) @>{\pi}>> Y @>>> 0 \\
& & @V{\eta}VV @V{\mu}VV @V{\id_Y}VV \\
0 @>>> X @>{f}>> Z @>{g}>> Y @>>> 0 \\
\endCD \]
commutes, and the left square is cartesian.

\item[{\rm (ii)}] The resulting exact sequence
\begin{equation}\label{eqn:pullback} 
0 \longrightarrow \bfF(Y) 
\stackrel{(\eta,\iota)}{\longrightarrow}
X \times \bfE(Y) \stackrel{f - \mu}{\longrightarrow}
Z \longrightarrow 0 
\end{equation}
is a projective resolution of $Z$ in $\tcF$. In particular,
$\tcF$ has enough projectives.

\item[{\rm (iii)}] $Z$ is projective if and only if 
$\eta$ is a monomorphism; then $Z \cong X' \times \bfE(Y)$ 
for some subobject $X' \hookrightarrow X$.

\item[{\rm (iv)}] $\bfE(Y)$ is a projective cover of $Y$.

\end{enumerate}

\end{proposition}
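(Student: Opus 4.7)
The plan is to exploit Lemma \ref{lem:tilde}'s description of morphism and extension groups in $\tcF$ together with the adjunction $\bfE \dashv \tbfL$ from Lemma \ref{lem:adjoint}. Throughout I view $\bfF(Y)$ as the object $(\bfF(Y),0,0) \in \tcF$ and $Y$ as $(0,Y,0)$, with $\iota = (\id_{\bfF(Y)},0)$ and $\pi = (0,\id_Y)$. For (i), commutativity of the right square forces the second component of $\mu$ to equal $\id_Y$, and the defining compatibility for a morphism $\bfE(Y) \to Z$ in $\tcF$ then forces the first component to equal $\eta \circ \bfF(\id_Y) = \eta$, proving existence and uniqueness. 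The left square commutes tautologically, and is cartesian because $\iota$ is the kernel of $\pi$, $f$ is the kernel of $g$, and $g \circ \mu = \pi$.

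For (ii), I first check that the objects $(X,0,0)$ and $\bfE(Y)$ are projective in $\tcF$. In the Hom/Ext exact sequence of Lemma \ref{lem:tilde}, the map $\psi$ with domain $(X,0,0)$ is trivially surjective (its target is zero), and $\psi$ with domain $\bfE(Y)$ satisfies $\psi(u,0) = u$ because the structural morphism is $\id$, hence is surjective. Exactness of (\ref{eqn:pullback}) is then verified componentwise in the semi-simple categories $\tcX$ and $\cY$: in $\tcX$ one recovers a standard split sequence $0 \to \bfF(Y) \to X \oplus \bfF(Y) \to X \to 0$, while in $\cY$ one has $0 \to 0 \to Y \to Y \to 0$. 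For (iii), if $\eta$ is a monomorphism then by semi-simplicity of $\tcX$ it admits a retraction and $X$ decomposes as $\eta(\bfF(Y)) \oplus X'$ for some $X' \hookrightarrow X$; a direct check yields $Z \cong (X',0,0) \times \bfE(Y)$, which is projective. Conversely, if $Z$ is projective then applying Lemma \ref{lem:tilde} to $Z' = (\bfF(Y),0,0)$ forces $u \mapsto u \circ \eta$ to be surjective onto $\Hom_{\tcX}(\bfF(Y),\bfF(Y))$, producing a retraction of $\eta$.

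For (iv), projectivity of $\bfE(Y)$ comes from (ii), and it remains to show that $\pi$ is an essential epimorphism. Given $\mu : M \to \bfE(Y)$ with $\pi \circ \mu$ epi, let $(u,v) : (X_1,Y_1,\eta_1) \hookrightarrow (\bfF(Y),Y,\id)$ be the image of $\mu$; the relation $u \circ \eta_1 = \bfF(v)$ holds and both $u$ and $v$ are monos. The hypothesis forces $v$ to be epi, hence iso, so $\bfF(v) = u \circ \eta_1$ is an isomorphism in $\tcX$; this makes $u$ an epi and therefore an isomorphism, so the image of $\mu$ is all of $\bfE(Y)$. The main obstacle is the componentwise bookkeeping in (ii): one must verify that the map $(\eta,\iota)$ and the combination $f - \mu$ really are morphisms in $\tcF$ and that they fit into a short exact sequence, which boils down to unwinding the structural morphism of the product $X \times \bfE(Y)$ and invoking exactness of $\bfF$.
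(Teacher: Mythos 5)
Your proposal is correct and follows essentially the same route as the paper: everything is read off from the explicit triple description of $\tcF$, the exact sequence of Lemma \ref{lem:tilde}, and semi-simplicity of $\tcX$. The only (harmless) deviations are cosmetic: you get projectivity of $\bfE(Y)$ and of objects of $\tcX$ from surjectivity of $\psi$ rather than from Lemma \ref{lem:adjoint} and Corollary \ref{cor:hereditary}, you prove the converse in (iii) by producing a retraction of $\eta$ against $Z' = (\bfF(Y),0,0)$ instead of showing $\Ker(\eta) = 0$, and you verify essentiality of $\pi$ in (iv) directly componentwise where the paper deduces it from (iii).
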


\begin{proof}
(i) This follows from the isomorphism (\ref{eqn:univ})
together with Yoneda's lemma.

(ii) By Corollary \ref{cor:hereditary}, $X$ and $\bfF(Y)$ 
are projective objects of $\tcF$. Moreover, $\bfE(Y)$ is 
projective as well, since
$\Hom_{\tcF}(\bfE(Y),?) \cong \Hom_{\cY}(Y,\tbfL(?))$,
where $\tbfL$ is exact and $\cY$ is semi-simple.

(iii) If $Z$ is projective, then of course $\Ext^1_{\tcF}(Z,X') = 0$
for all $X' \in \tcX$. In view of the projective resolution
(\ref{eqn:pullback}), it follows that the map
\[ 
\Hom_{\tcX}(X,X') \times \Hom_{\tcX}(\bfE(Y),X') 
\longrightarrow \Hom_{\tcX}(\bfF(Y),X'), \quad 
(u,v) \longmapsto u \circ \eta + v \circ \iota \]
is surjective. As $\Hom_{\tcX}(\bfE(Y),X') = 0$ by Lemma
\ref{lem:adjoint}, this just means that the map
\[ 
\Hom_{\tcX}(X,X') \longrightarrow \Hom_{\tcX}(\bfF(Y),X'), 
\quad (u,v) \longmapsto u \circ \eta \]
is surjective (alternatively, this follows from the exact sequence
of Lemma \ref{lem:tilde}). 
Since $\tcX$ is semi-simple, the pull-back map
\[ \Hom_{\tcX}(\bfF(Y),X') \longrightarrow 
\Hom_{\tcX}(\Ker(\eta),X') \]
is surjective as well. Thus, $\Hom_{\tcX}(\Ker(\eta),X') = 0$
for all $X'$, i.e., $\Ker(\eta) = 0$. Using the semi-simplicity
of $\tcX$ again, we may choose a subobject
$X' \hookrightarrow X$ such that $X = X' \oplus \Im(\eta)$; then
the natural map $X' \times \bfE(Y) \to Z$ is an isomorphism.
Conversely, $X' \times \bfE(Y)$ is projective by (ii).

(iv) This follows from (iii), since we have $\Hom_{\tcF}(X',Y) = 0$
for any $X' \in \tcX$, and 
$\Hom_{\tcF}(\bfE(Y'),Y) \cong \Hom_{\cY}(Y',Y)$ for any $Y' \in \cY$.
\end{proof}

Finally, we obtain two homological characterizations of the universal
objects $\bfE(Y)$, the first one being somewhat analogous to the notion 
of exceptional objects:

\begin{lemma}\label{lem:exc}
The following conditions are equivalent for 
$Z = (X,Y,\eta) \in \tcF$:

\begin{enumerate}

\item[{\rm (i)}] $Z \cong \bfE(Y)$.

\item[{\rm (ii)}] $\End_{\tcF}(Z) \cong \End_{\cY}(Y)$ via $\tbfL$, and
$\Ext^1_{\tcF}(Z,Z) = 0$.

\item[{\rm (iii)}] $\Hom_{\tcF}(Z,X') = 0 = \Ext^1_{\tcF}(Z,X')$ for all
$X' \in \tcX$.

\end{enumerate}

\end{lemma}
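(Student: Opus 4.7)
The plan is to establish the equivalence via (i) $\Rightarrow$ (ii), (i) $\Rightarrow$ (iii), (iii) $\Rightarrow$ (i), and (ii) $\Rightarrow$ (i). The first two implications are quick consequences of two earlier results: the adjunction $\bfE \dashv \tbfL$ from Lemma \ref{lem:adjoint}, and the projectivity of $\bfE(Y)$ from Proposition \ref{prop:universal}(ii). The adjunction yields the natural identifications $\End_{\tcF}(\bfE(Y)) \cong \Hom_{\cY}(Y, \tbfL(\bfE(Y))) = \End_{\cY}(Y)$, realized by $\tbfL$ itself, and $\Hom_{\tcF}(\bfE(Y), X') \cong \Hom_{\cY}(Y, \tbfL(X')) = \Hom_{\cY}(Y, 0) = 0$ for every $X' \in \tcX$; the projectivity yields $\Ext^1_{\tcF}(\bfE(Y), -) = 0$, covering both the vanishing required in (ii) and in (iii).

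For (iii) $\Rightarrow$ (i), I specialize the exact sequence of Lemma \ref{lem:tilde} to $Z' = X' \in \tcX$ (where the $\Hom_{\cY}(Y, 0)$ term vanishes), obtaining
\[
0 \to \Hom_{\tcF}(Z, X') \to \Hom_{\tcX}(X, X') \xrightarrow{\eta^*} \Hom_{\tcX}(\bfF(Y), X') \to \Ext^1_{\tcF}(Z, X') \to 0
\]
with $\eta^* := \Hom_{\tcX}(\eta, X')$. Condition (iii) forces $\eta^*$ to be an isomorphism for every $X' \in \tcX$, and Yoneda's lemma applied in $\tcX$ then shows that $\eta : \bfF(Y) \to X$ is itself an isomorphism. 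Hence $(\eta^{-1}, \id_Y)$ exhibits an isomorphism $Z \xrightarrow{\sim} \bfE(Y)$ in $\tcF$.

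For (ii) $\Rightarrow$ (i) I consider the universal morphism $\mu = (\eta, \id_Y) : \bfE(Y) \to Z$ corresponding to $\id_Y$ under the adjunction, and aim to show it is an isomorphism. Exactness of $\tbfR$ and $\tbfL$ (Lemma \ref{lem:exact}(i)) gives $\ker(\mu) = \ker(\eta)$ and $\mathrm{coker}(\mu) = \mathrm{coker}(\eta)$, both in $\tcX$. Pre-composition with $\mu$ induces a map $\End_{\tcF}(Z) \to \Hom_{\tcF}(\bfE(Y), Z)$ which, under the identifications from (ii) and the adjunction, becomes the identity on $\End_{\cY}(Y)$. Applying $\Hom_{\tcF}(-, Z)$ to the resulting short exact sequences and using projectivity of $\bfE(Y)$ together with $\End_{\tcF}(Z) \cong \End_{\cY}(Y)$ and $\Ext^1_{\tcF}(Z, Z) = 0$, I will derive $\Hom_{\tcX}(\mathrm{coker}(\eta), X) = 0$ via Lemma \ref{lem:tilde}; since $\mathrm{coker}(\eta)$ is a quotient of $X$ in the semi-simple $\tcX$, its simple summands appear in $X$, forcing $\mathrm{coker}(\eta) = 0$ and therefore $\eta$ (and $\mu$) epi.

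The main obstacle is showing $\ker(\eta) = 0$. My plan is to examine the class of the tautological sequence $0 \to \ker(\eta) \to \bfE(Y) \to Z \to 0$ in $\Ext^1_{\tcF}(Z, \ker(\eta))$, which via Lemma \ref{lem:tilde} identifies with $\End_{\tcX}(\ker(\eta))$ and corresponds to the identity endomorphism. Combining the assumption $\Ext^1_{\tcF}(Z, Z) = 0$ with the $\End_{\tcF}(Z) \cong \End_{\cY}(Y)$ identification, and tracing the connecting maps in the long exact sequence obtained by applying $\Hom_{\tcF}(Z, -)$, I expect to show that this tautological class must vanish; the projective cover property of $\bfE(Y)$ (Proposition \ref{prop:universal}(iv)) together with indecomposability considerations then forces $\ker(\eta) = 0$, and $\mu$ is thus an isomorphism, completing the proof.
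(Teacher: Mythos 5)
Your directions (i) $\Rightarrow$ (ii), (i) $\Rightarrow$ (iii) and (iii) $\Rightarrow$ (i) are correct and essentially the paper's own: the first two are read off from the adjunction of Lemma \ref{lem:adjoint} and the projectivity statements of Proposition \ref{prop:universal}, and the third from the four-term sequence of Lemma \ref{lem:tilde} with $Z' \in \tcX$ together with Yoneda in $\tcX$.

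The direction (ii) $\Rightarrow$ (i) is where your proposal has a genuine gap, and the mechanism you sketch cannot close it. First, the identification you invoke is not correct: by Lemma \ref{lem:tilde}, $\Ext^1_{\tcF}(Z,\Ker(\eta))$ is the quotient of $\Hom_{\tcX}(\bfF(Y),\Ker(\eta))$ by the image of $u \mapsto u\circ\eta$, not $\End_{\tcX}(\Ker(\eta))$; restriction to $\Ker(\eta)$ only gives a surjection onto $\End_{\tcX}(\Ker(\eta))$, under which the class of $0 \to \Ker(\eta) \to \bfE(Y) \to Z \to 0$ maps to $\id_{\Ker(\eta)}$. In particular that tautological class is nonzero whenever $\Ker(\eta) \neq 0$: the sequence admits no retraction, since $\Hom_{\tcF}(\bfE(Y),\Ker(\eta)) \cong \Hom_{\cY}(Y,0) = 0$. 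So ``showing the class vanishes'' is not a route to $\Ker(\eta)=0$; it is a restatement of it. Nor can you transport information from $\Ext^1_{\tcF}(Z,Z)=0$ by connecting maps: the only morphisms from $\Ker(\eta)$ to $Z$ in $\tcF$ come from $\Hom_{\tcX}(\Ker(\eta),X)$, which may be zero --- exactly the case you need to exclude. (Your cokernel argument for $\eta$ being an epimorphism is fine, though it follows in one line from the injectivity of $\tbfL$ on $\End_{\tcF}(Z)$: if $u\circ\eta=0$ then $(u,0)\in\End_{\tcF}(Z)$ forces $u=0$, and semi-simplicity of $\tcX$ gives surjectivity of $\eta$.)

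For comparison, the paper proceeds quite differently: it rewrites (ii), via Lemma \ref{lem:tilde} applied with $Z'=Z$, as the statement that for each $v \in \End_{\cY}(Y)$ there is a unique $u$ with $u\circ\eta = \eta\circ\bfF(v)$, and that $\psi(u,v)=u\circ\eta-\eta\circ\bfF(v)$ is surjective onto $\Hom_{\tcX}(\bfF(Y),X)$; uniqueness gives that $\eta$ is an epimorphism, existence gives that $\Ker(\eta)$ is stable under every $\bfF(v)$, and surjectivity of $\psi$ then shows that every morphism $\bfF(Y)\to X$ vanishes on $\Ker(\eta)$, i.e.\ $\Hom_{\tcX}(\Ker(\eta),X)=0$, from which the vanishing of $\Ker(\eta)$ is concluded. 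Your feeling that this kernel step is ``the main obstacle'' is sound, and it deserves more care than either your sketch or a verbatim reading of that conclusion provides: $\Hom_{\tcX}(\Ker(\eta),X)=0$ forces $\Ker(\eta)=0$ only when every simple constituent of $\Ker(\eta)$ occurs in $X$. Indeed, if $\bfF(Y)=X\oplus K$ with $K\neq 0$, $\Hom_{\tcX}(K,X)=0$, and $\eta$ is the projection onto $X$, one checks from Lemma \ref{lem:tilde} that both conditions of (ii) hold for $Z=(X,Y,\eta)$, while $Z\not\cong\bfE(Y)$ (condition (iii) fails there, since $\Ext^1_{\tcF}(Z,K)\neq 0$). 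So the robust characterization is (iii), which your argument does establish; your proposal as written proves (i) $\Leftrightarrow$ (iii) and (i) $\Rightarrow$ (ii), but not (ii) $\Rightarrow$ (i).
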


\begin{proof}
(i) $\Rightarrow$ (ii), (i) $\Rightarrow$ (iii) This follows from Lemma 
\ref{lem:adjoint} and Proposition \ref{prop:universal}.

(ii) $\Rightarrow$ (i) In view of the exact sequence of 
Lemma \ref{lem:tilde}, we may rephrase the assumption as follows:
for any $v \in \End_{\cY}(Y)$, there exists a unique 
$u \in \End_{\tcX}(X)$ such that $u \circ \eta = \eta \circ F(v)$;
moreover, the map $\psi: (f,g) \mapsto f \circ \eta - \eta \circ F(g)$ 
is surjective. As a consequence, $\eta$ is an epimorphism 
(by the uniqueness of $u$),
and $\Ker(\eta)$ is stable under $F(v)$ for any $v \in \End_{\cY}(Y)$
(by the existence of $u$). Then $\psi(u,v)$ vanishes identically
on $\Ker(\eta)$ for any $u \in \End_{\tcX}(X)$, $v \in \End_{\cY}(Y)$. 
As $\psi$ is surjective, this forces $\Ker(\eta) = 0$. Thus,
$\eta$ is an isomorphism, and hence the pair
$(\eta: \bfF(Y) \to X$, $\id: Y \to Y$) yields an isomorphism
$\bfE(Y) \to Z$ in $\tcF$.

(iii) $\Rightarrow$ (i) In view of the long exact exact sequence of
Lemma \ref{lem:tilde} again, the map 
\[ \psi : \Hom_{\tcX}(X,X') \longrightarrow \Hom_{\tcX}(\bfF(Y),X'),
\quad u \longmapsto u \circ \eta \]
is an isomorphism for any $\eta \in \tcX$. It follows that $\eta$
is an isomorphism as well. 
\end{proof}

\subsection{Relation to module categories}
\label{subsec:rmc}

Let $\cC$ be an abelian category equipped with a torsion pair
$(\cX,\cY)$ satisfying the assumptions (a), (b), (c), (d) 
of Subsection \ref{subsec:ue}. We assume in addition that $\cC$ 
is a \emph{finite length} category, i.e., every object has a
composition series. Then the semi-simple categories $\cX$, $\cY$ 
are of finite length as well; as a consequence, each of them
is equivalent to the category of all left modules of finite length over 
a ring, which can be constructed as follows.

Denote by $I$ the set of isomorphism classes of simple objects 
of $\cX$. Choose a representative $S$ for each class, and let
\[ D_S := \End_{\cX}(S)^{\op}; \]
then $D_S$ is a division ring. Given $X \in \cX$, the group 
$\Hom_{\cX}(S,X)$ is a left $D_S$-vector space of finite dimension; 
moreover, $\Hom_{\cX}(S,X) = 0$ for all but finitely many $S \in I$. 
Thus, \[ \bfM(X) := \bigoplus_{S \in I} \Hom_{\cX}(S,X) \]
is a left module of finite length over the ring
\[ R_{\cX} := \bigoplus_{S \in I} D_S. \]
(Notice that every $R_{\cX}$-module of finite length is semi-simple;
moreover, the ring $R_{\cX}$ is semi-simple if and only if $I$ 
is finite). The assignment $X \mapsto \bfM(X)$ extends to a
covariant functor
\begin{equation}\label{eqn:ss} 
\bfM_{\cX} : \cX \stackrel{\cong}{\longrightarrow} 
R_{\cX}\-\mod, 
\end{equation}
which is easily seen to be an equivalence of categories. 
Likewise, we have an equivalence of categories 
\[ \bfM_{\cY} : \cY \longrightarrow R_{\cY}\-\mod,\] 
where $R_{\cY} := \bigoplus_{T \in J} D_T$. 
 
We now make a further (and final) assumption:

\begin{enumerate}
\item[(e)] The equivalence (\ref{eqn:ss}) extends to an equivalence 
of categories 
\begin{equation}\label{eqn:tss} 
\bfM_{\tcX} : \tcX \stackrel{\cong}{\longrightarrow} 
R_{\cX}\-\Modss. 
\end{equation}
\end{enumerate}

The right-hand side of (\ref{eqn:tss}) is a semi-simple category 
containing $\cX$ as a Serre subcategory, as required by assumption 
(d). When the set $I$ is finite, assumption (e) just means that
$\tcX = R_{\cX}$-$\Mod$. For an arbitrary set $I$, the objects 
of $\tcX$ are the direct sums $X = \bigoplus_{S \in I} X_S$, where
each $X_S$ is a left $D_S$-vector space. We say that $X_S$ is 
the \emph{isotypical component} of $X$ of type $S$; its dimension
(possibly infinite) is the \emph{multiplicity} of $S$ in $X$.

We now turn to the covariant exact functor $\bfF : \cY \to \tcX$.
We may identify each simple object $T$ of $\cY$ with $D_T$, on which
$R_{\cY}$ acts via left multiplication. Then $\bfF(D_T)$ is a 
semi-simple left $R_{\cX}$-module. Also, $\bfF(D_T)$ is 
a right $D_T$-module, via the action of $D_T$ on itself via right 
multiplication, which yields an isomorphism 
\[ D_T \stackrel{\cong}{\longrightarrow} \End_{R_{\cY}}(D_T),\] 
and the ring homomorphism 
$\End_{R_{\cY}}(D_T) \to \End_{R_{\cX}}(\bfF(D_T))$
induced by $\bfF$. As the left and right actions of $D_T$ on itself 
commute, $\bfF(D_T)$ is a $D_T$-$R_{\cY}$-bimodule, which 
we view as a $R_{\cX}$-$R_{\cY}$-bimodule. Let
\[ M := \bigoplus_{T \in J} \bfF(D_T). \]
This is again a $R_{\cX}$-$R_{\cY}$-bimodule, semi-simple
as a $R_{\cX}$-module and as a $R_{\cY}$-module. 
So we may form the triangular matrix ring
\[ R := 
\begin{pmatrix} 
R_{\cY} & 0 \\ 
M & R_{\cX} 
\end{pmatrix} \]
as in \cite[\S III.2]{ARS}. More specifically, $R$ consists
of the triples $(x,y,m)$, where $x \in R_{\cX}$, 
$y \in R_{\cY}$, and $m \in M$; the addition and multiplication
are those of the matrices 
$\left( \begin{smallmatrix} y & 0 \\ m & x \end{smallmatrix} \right)$,
using the bimodule structure of $M$. Note the decomposition
$R = (R_{\cX} \oplus R_{\cY}) \oplus M$,
where $R_{\cX} \oplus R_{\cY}$ is a subring, and $M$ is 
an ideal of square $0$.

We say that a $R$-module $Z$ is \emph{locally finite}, if
$Z = X \oplus Y$ as an $R_{\cX} \oplus R_{\cY}$-module,
where $X$ is a semi-simple $R_{\cX}$-module and $Y$
is an $R_{\cY}$-module of finite length. We denote by 
$R$-$\tmod$ the full subcategory of $R$-$\Mod$ with objects 
the locally finite modules; then $R$-$\mod$ is a Serre 
subcategory of $R$-$\tmod$.
We may now state our main homological result:

\begin{theorem}\label{thm:equiv}
With the above notation and assumptions (a), (b), (c), (d), (e), 
the abelian categories $\cF$ and $\tcF$ are hereditary, and $\tcF$ 
has enough projectives. Moreover, there are compatible equivalences 
of categories
\[ \bfM : \cF \stackrel{\cong}{\longrightarrow} R\-\mod, 
\quad  \tbfM : \tcF \stackrel{\cong}{\longrightarrow} R\-\tmod. \]
\end{theorem}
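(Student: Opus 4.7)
The plan is to construct the equivalence $\tbfM : \tcF \to R\-\tmod$ explicitly; the heredity statements and the existence of enough projectives will then follow from results already in Section \ref{sec:chc}, and the restriction $\bfM : \cF \to R\-\mod$ will be automatic.

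For the construction, I would set $\tbfM(X, Y, \eta) := \bfM_{\tcX}(X) \oplus \bfM_\cY(Y)$ as an abelian group. Using the orthogonal idempotents of $R$ corresponding to the decomposition $R = (R_\cX \oplus R_\cY) \oplus M$, an $R$-module structure on this direct sum amounts to: an $R_\cX$-module structure on the first summand (supplied by $\bfM_{\tcX}$), an $R_\cY$-module structure on the second (supplied by $\bfM_\cY$), and — because $M^2 = 0$ and the placement of $M$ in the triangular matrix ring forces $M$ to kill the first summand and send the second into the first — an $R_\cX$-linear map $M \otimes_{R_\cY} \bfM_\cY(Y) \to \bfM_{\tcX}(X)$. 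The pivotal ingredient is a natural identification
\[ \bfM_{\tcX}(\bfF(Y)) \;\cong\; M \otimes_{R_\cY} \bfM_\cY(Y), \]
which one verifies first for $Y = D_T$ simple (both sides are $\bfF(D_T)$ by construction of $M$) and then extends to arbitrary $Y$ using semi-simplicity of $\cY$, additivity, and exactness of $\bfF$. Under this identification, $\eta$ yields exactly the required $M$-action. On morphisms, $\tbfM(u,v) := \bfM_{\tcX}(u) \oplus \bfM_\cY(v)$, and the compatibility $u \circ \eta = \eta' \circ \bfF(v)$ is precisely $R$-linearity with respect to $M$.

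To show $\tbfM$ is an equivalence I would check essential surjectivity by reconstructing $(X, Y, \eta)$ from any locally finite $R$-module $W$ via the idempotent decomposition and the tensor-hom adjunction, and verify fullness and faithfulness by comparing the exact sequence of Lemma \ref{lem:tilde} with the analogous sequence for $\Hom_R$ and $\Ext^1_R$ arising from the bimodule decomposition of $R$; these sequences match through the identification above. Restricting to $\cF$ is immediate: $\tbfM(Z)$ has finite length over $R$ if and only if $\bfM_{\tcX}(X)$ has finite length over $R_\cX$, i.e., $X \in \cX$. Finally, the heredity and projective statements are already in hand: Lemma \ref{lem:tilde} shows that $(\tcF, \tcX, \cY)$ satisfies (a), (b), (c) of Subsection \ref{subsec:ue}, so Corollary \ref{cor:hereditary} makes $\tcF$ hereditary and Proposition \ref{prop:universal} gives enough projectives; Lemma \ref{lem:hereditary} transfers heredity to the Serre subcategory $\cF$. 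The one delicate step is the bimodule identification $\bfM_{\tcX}(\bfF(Y)) \cong M \otimes_{R_\cY} \bfM_\cY(Y)$: one must line up the right $D_T$-action on $\bfF(D_T)$ with the left $R_\cY$-action on $\bfM_\cY(Y)$ so that the isomorphism is genuinely $R_\cY$-functorial in $Y$. Once that bookkeeping is settled, the remainder of the proof is formal.
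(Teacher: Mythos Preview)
Your proposal is correct and follows essentially the same approach as the paper: both reduce the equivalence to identifying the functor $\bfF$ with $M \otimes_{R_{\cY}} -$ (your ``pivotal ingredient'' $\bfM_{\tcX}(\bfF(Y)) \cong M \otimes_{R_{\cY}} \bfM_{\cY}(Y)$), and both derive the heredity and projectivity statements from Corollary~\ref{cor:hereditary}, Proposition~\ref{prop:universal}, and Lemma~\ref{lem:hereditary}. The only difference is presentational: the paper first invokes the standard description of modules over a triangular matrix ring as triples $(X,Y,f)$ with $f : M \otimes_{R_{\cY}} Y \to X$ (citing \cite[Prop.~III.2.2]{ARS}), then proves the functor identification via an Eilenberg--Watts argument, whereas you build the triple description directly into the definition of $\tbfM$ and argue the identification by checking on simples and extending via semi-simplicity---which is the same argument once the naturality bookkeeping you flag is carried out.
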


\begin{proof}
The first assertion follows by combining Corollary 
\ref{cor:hereditary}, Lemma \ref{lem:equiv} and Proposition 
\ref{prop:universal}.

To show the second assertion, we can freely replace $\cX$, 
$\tcX$, $\cY$ with compatibly equivalent categories 
in the construction of $\cF$, $\tcF$. Thus, we may assume that 
$\cX = R_{\cX}$-$\mod$, $\tcX = R_{\cX}$-$\Modss$ and 
$\cY = R_{\cY}$-$\mod$.

The category of all left $R$-modules is equivalent to 
the category of triples $(X,Y,f)$, where $X$ is 
a $R_{\cX}$-module, $Y$ a $R_{\cY}$-module, and 
$f : M \otimes_{R_{\cY}} Y \to X$ a morphism of 
$R_{\cX}$-modules. The morphisms from $(X,Y,f)$ to 
$(X',Y',f')$ are the pairs $(u,v)$, where $u \in \Hom_{\cX}(X,X')$, 
$v \in \Hom_{\cY}(Y,Y')$, and the following diagram commutes:
\[ \CD
M \otimes_{R_{\cY}} Y @>{\id_M \otimes v}>> 
M \otimes_{R_{\cY}} Y' \\
@V{f}VV @V{f'}VV \\
X @>{u}>> X'. \\
\endCD \]
(This result is obtained in \cite[Prop.~III.2.2]{ARS} for modules 
of finite length over an Artin algebra. The proof adapts without 
change to the present setting).
Moreover, the full subcategory $R$-$\mod$ 
(resp.~$R$-$\tmod$) is equivalent to the full subcategory 
of triples $(X,Y,f)$, where $X$ and $Y$ have finite length 
(resp.~$X$ is semi-simple and $Y$ has finite length).

To complete the proof, it suffices to show that the covariant
exact functor $\bfF : \cY \to \tcX$ is isomorphic to 
$M \otimes_{R_{\cY}} -$. As $\bfF$ commutes with finite direct 
sums, we have 
$\bfF = \bigoplus_{T \in J} \bfF_T$ for covariant exact functors
$\bfF_T: D_T$-$\mod \to \tcX$. We now argue as in the proof of
the Eilenberg-Watts theorem (see \cite[Thm.~II.2.3]{Bass}).
Given a left $D_T$-vector space $V$, every $v \in V$ yields
a $D_T$-morphism $v : D_T \to V$, and hence a 
$R_{\cX}$-morphism 
$\bfF_T(v) : \bfF_T(D_T) = \bfF(D_T) \to \bfF_T(V)$.
The resulting map 
$V \to \Hom_{R_{\cX}}(\bfF(D_T), \bfF_T(V))$
is easily checked to be a $D_T$-morphism. In view of the natural
isomorphism
\[ \Hom_{D_T}(V,\Hom_{R_{\cX}}(\bfF(D_T), \bfF_T(V)))
\cong 
\Hom_{R_{\cX}}(\bfF(D_T) \otimes_{D_T} V , \bfF_T(V)), \]
this yields a functorial map 
\[ f_V : \bfF(D_T) \otimes_{D_T} V \longrightarrow \bfF_T(V). \]
When $V = D_T$, one checks that $f_V$ is identified to the identity 
map of $\bfF(D_T)$; moreover, the formation of $f_V$ commutes with 
finite direct sums, since so does $\bfF_T$. So $f_V$ yields
an isomorphism of functors 
$\bfF_T \cong \bfF(D_T) \otimes_{D_T} -$.
\end{proof}

\begin{remark}\label{rem:rmc}
Instead of assumption (e), we may make the stronger and much simpler
assumption that $\tcX = \cX$. (This holds for the isogeny category
of vector extensions of abelian varieties, as we will see in 
\S \ref{subsubsec:veav}). Then we obtain as in the proof of Theorem
\ref{thm:main} that \emph{$\tcF = \cF$ is hereditary, has enough 
projectives, and is equivalent to $R$-$\mod$}. 
\end{remark}

Next, we obtain a separation property of the above ring $R$,
and we describe its center $Z(R)$ as well as the center of the
abelian category $R$-mod. We denote by $Z_S$ (resp.~$Z_T$) the
center of the division ring $D_S$ (resp.~$D_T$) for any 
$S \in I$, $T \in J$.

\begin{proposition}\label{prop:center}

\begin{enumerate}

\item[{\rm (i)}] The intersection of all the left ideals of finite colength 
in $R$ is zero.

\item[{\rm (ii)}] The center $Z(R)$ consists of the triples
$(x,y,0)$, where $x = \sum_S x_S \in \bigoplus_{S \in I} Z_S$,
$y = \sum_T y_T \in \bigoplus_{T \in J} Z_T$ and $x_S m = m y_T$
for all $m \in \bfF(D_T)$.

\item[{\rm (iii)}] The center of $R$-mod is the completion of
$Z(R)$, consisting of the pairs
$(x,y)$, where $x = (x_S) \in \prod_{S \in I} Z_S$,
$y = (y_T) \in \prod_{T \in J} Z_T$ and $x_S m = m y_T$
for all $m \in \bfF(D_T)$.

\end{enumerate}

\end{proposition}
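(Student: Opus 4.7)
For (i), the plan is to exhibit enough left ideals of finite colength to separate the elements of $R$, working componentwise in the decomposition $R = (R_\cX \oplus R_\cY) \oplus M$. For each $S_0 \in I$, the subset $L_{S_0} := \{(x,y,m) \in R : x_{S_0} = 0\}$ is a left ideal with $R/L_{S_0} \cong D_{S_0}$ simple (here $R_\cY$ and $M$ act as zero); analogously $L_{T_0} := \{(x,y,m) : y_{T_0}=0\}$ has simple quotient $D_{T_0}$ for each $T_0 \in J$. To separate the elements of $M$ itself, for each $T_0 \in J$ and each $R_\cX$-submodule $N \subset \bfF(D_{T_0})$ of finite colength, set
\[ L_{T_0,N} := \{(x,y,m) \in R : y_{T_0}=0,\ m_{T_0} \in N\}. \]
The extra condition $y_{T_0}=0$ is needed because in the product $(x_0,y_0,m_0)(x',y',m')$ the $M$-entry is $m_0 y' + x_0 m'$, whose $T_0$-component involves a cross-term $m_{0,T_0}\, y'_{T_0}$ that can leave $N$. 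With $y_{T_0}=0$ imposed, $L_{T_0,N}$ is a left ideal and $R/L_{T_0,N}$ is an extension of $D_{T_0}$ by $\bfF(D_{T_0})/N$, hence of finite length. The semi-simplicity of $\bfF(D_{T_0})$ as an $R_\cX$-module makes the intersection of its finite-colength submodules zero, and combining the three families of ideals yields $\bigcap L = 0$.

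For (ii), I would expand $zr = rz$ for $z = (x,y,m)$ and arbitrary $r = (x',y',m')$ directly in the triangular-matrix form. The diagonal entries force $x \in Z(R_\cX) = \bigoplus_S Z_S$ and $y \in Z(R_\cY) = \bigoplus_T Z_T$. The off-diagonal entry yields $my' + xm' = m'y + x'm$ for all $(x',y',m')$; specializing $x'=0$, $m'=0$, $y'=1_{D_T}$ gives $m_T = 0$ for every $T$, so $m=0$. The residual identity $xm' = m'y$ for $m' \in M$, read component by component using the $R_\cX$-isotypical decomposition of each $\bfF(D_T)$ (which is preserved by the right $D_T$-action), is exactly $x_S m = m y_T$ for $m \in \bfF(D_T)$.

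For (iii), my plan is to describe natural transformations $\alpha \colon \id \to \id$ of $R\-\mod$ directly via the equivalence $R\-\mod \cong \cF$ of Theorem \ref{thm:equiv}. Evaluating $\alpha$ at a simple of type $D_S$ yields $\alpha_{D_S} \in Z(\End_\cX(D_S)) = Z_S$, and similarly $\alpha_{D_T} \in Z_T$; no finiteness restriction arises since each finite-length module sees only finitely many simples, so one obtains arbitrary data $(x_S, y_T) \in \prod_S Z_S \times \prod_T Z_T$. To derive the cross-compatibility, I would apply naturality to the length-$2$ objects $E = (D_S, D_T, g) \in \cF$ parameterized by arbitrary $D_S$-linear maps $g \colon \bfF(D_T)_S \to D_S$ (using $M \otimes_{R_\cY} D_T \cong \bfF(D_T)$ from the proof of Theorem \ref{thm:equiv}). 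Naturality with respect to the inclusion $D_S \hookrightarrow E$ and the projection $E \twoheadrightarrow D_T$ forces $\alpha_E = (x_S\cdot,\, y_T\cdot)$; demanding that this pair commute with $g$ gives $x_S\, g(m) = g(m\, y_T)$ for $m \in \bfF(D_T)_S$, and letting $g$ range over the full $D_S$-dual of $\bfF(D_T)_S$ extracts $x_S m = m y_T$ pointwise, since this dual separates points of a $D_S$-vector space. Conversely, any $(x_S, y_T)$ satisfying the compatibility defines $\alpha_{(X,Y,\eta)} := (x\cdot,\, y\cdot)$ as an endomorphism in $\cF$ by the same calculation, and these assemble into a natural transformation. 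The main obstacle is this last step: the object $\bfF(D_T)$ typically has infinite length and lies outside $R\-\mod$, so the compatibility cannot be read off directly; using length-$2$ extensions with simple target keeps the argument inside $R\-\mod$ while still exercising the full $D_S$-dual of $\bfF(D_T)_S$.
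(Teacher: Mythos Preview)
Your proposal is correct and follows the same overall strategy as the paper, with a few differences in organization worth noting.

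For (i), the paper first passes to the quotient rings $R_{S,T} = \left(\begin{smallmatrix} D_T & 0 \\ \bfF(D_T)_S & D_S \end{smallmatrix}\right)$ and separates elements there; you instead work directly inside $R$ with the three families $L_{S_0}$, $L_{T_0}$, $L_{T_0,N}$. Your route is slightly more direct and avoids the reduction step, at the cost of having to keep track of the cross-term $m_{0,T_0}\,y'_{T_0}$ explicitly (which you do correctly).

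For (ii), both arguments are the same direct verification.

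For (iii), the paper argues that a central family $(z_N)$ is determined by its values $(z_S,z_T)$ on simples because every $N$ decomposes as $X\oplus Y$ over $R_\cX\oplus R_\cY$, and then invokes Lemma~\ref{lem:tilde} for the compatibility $z_S m = m z_T$; the converse inclusion is referred back to (i). Your argument via the length-$2$ objects $(D_S,D_T,g)$ makes the use of Lemma~\ref{lem:tilde} explicit: naturality with respect to $D_S\hookrightarrow E$ and $E\twoheadrightarrow D_T$ pins down $\alpha_E=(x_S\cdot,\,y_T\cdot)$, and the endomorphism condition in $\cF$ then reads $x_S\,g(m)=g(m\,y_T)$; letting $g$ run over the full $D_S$-dual of $\bfF(D_T)_S$ (which separates points) recovers the pointwise identity. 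Your converse---defining $\alpha_{(X,Y,\eta)}:=(x\cdot,\,y\cdot)$ directly and checking it is a morphism in $\cF$ via the same compatibility---is more self-contained than the paper's appeal to (i), which is somewhat elliptical.
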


\begin{proof}
(i) Given $S \in I$ and $T \in J$, we may form the triangular matrix
ring
\[ R_{S,T} :=  \begin{pmatrix} 
D_T & 0 \\ \bfF(D_T)_S & D_S \end{pmatrix}, \]
where $\bfF(D_T)_S$ denotes the isotypical component of type $S$ 
of the $R_{\cX}$-module $\bfF(D_T)$. Clearly, $R_{S,T}$ is the
quotient of $R$ by a two-sided ideal $I_{S,T}$, and 
$\bigcap_{S,T} I_{S,T} = 0$. Thus, it suffices to show the assertion
for $R$ replaced with $R_{S,T}$. 

The left $D_S$-vector space $\bfF(D_T)_S$ contains a family of 
subspaces $(M_a)_{a \in A}$ such that the dimension of each quotient
$\bfF(D_T)/M_a$ is finite, and $\bigcap_{a \in A} M_a = 0$. Then 
$\left( \begin{smallmatrix} 0 & 0 \\ M_a & 0 \end{smallmatrix} \right)$
is a left ideal of $R_{S,T}$, as well as 
$\left( \begin{smallmatrix} D_T & 0 \\ 0 & 0 \end{smallmatrix} \right)$
and 
$\left( \begin{smallmatrix} 0 & 0 \\ 0 & D_S \end{smallmatrix} \right)$.
Moreover, all these left ideals have finite colength, and their
intersection is zero.

(ii) This is a direct verification.

(iii) Recall that the center of $R$-mod consists of the families
$z = (z_N)_{N \in R\-\mod}$ such that $z_N \in \End_R(N)$ and
$f \circ z_N = z_{N'} \circ f$ for any $f \in \Hom_R(N,N')$;
in particular, $z_N$ is central in $\End_R(N)$. Thus, $z_S \in Z_S$
and $z_T \in Z_T$ for all $S,T$. Since $N = X \oplus Y$ as a 
$R_{\cX} \oplus R_{\cY}$-module, we see that $z$ is uniquely determined 
by the families $(z_S)_{S \in I}$, $(z_T)_{T \in J}$. Moreover, we have
$z_S m = m z_T$ for all $m \in \bfF(D_T)_S$, as follows e.g. from
Lemma \ref{lem:tilde}. Thus, the center of $R$-mod is contained
in the completion of $Z(R)$. The opposite inclusion follows from (i).
\end{proof}

\begin{remark}\label{rem:center}
Assume that each $D_S$-$D_T$-bimodule $\bfF(D_T)_S$ contains a family
of sub-bimodules $(N_a)_{a \in A}$ such that each quotient
$\bfF(D_T)_S$ has finite length as a $D_S$-module, and 
$\bigcap_{a \in A} N_a = 0$. Then $R$ satisfies a stronger 
separation property, namely, its two-sided ideals of finite colength
(as left modules) have zero intersection.

The above assumption obviously holds if $\bfF(D_T)_S$ has finite
length as a $D_S$-module.It also holds if both $D_S$ and $D_T$ are 
(say) of characteristic $0$ and finite-dimensional over $\bQ$;
indeed, $\bfF(D_T)_S$ is a module over $D_S \otimes_{\bQ} D_T^{\op}$,
and the latter is a finite-dimensional semi-simple $\bQ$-algebra.
\end{remark}

\section{Applications to commutative algebraic groups}
\label{sec:acag}

\subsection{Some isogeny categories}
\label{subsec:sic}

\subsubsection{Algebraic groups \cite{DG, SGA3, Brion-II}}
\label{subsubsec:ag}

Throughout this section, we fix a ground field $k$, with
algebraic closure $\bar{k}$ and characteristic $\char(k)$. 
An \emph{algebraic group} $G$ is a group scheme of finite type 
over $k$. A \emph{subgroup} $H \subset G$ is a $k$-subgroup 
scheme; then $H$ is a closed subscheme of $G$. When 
$\char(k) = 0$, every algebraic group is smooth.

Unless otherwise mentioned, all algebraic groups will be assumed
\emph{commutative}. They form the objects 
of an abelian category $\cC$, with morphisms
the homomorphisms of $k$-group schemes (see 
\cite[VIA, Thm.~5.4.2]{SGA3}). Every object in $\cC$ is artinian
(since every decreasing sequence of closed subschemes of a scheme
of finite type eventually terminates), but generally not noetherian: 
in the multiplicative group $\bG_m$, the subgroups of roots of unity
of order $\ell^n$, where $\ell$ is a fixed prime and $n$ a non-negative
integer, form an infinite ascending chain.

The finite group schemes form a Serre subcategory $\cF$ of $\cC$. 
The quotient category $\cC/\cF$ is equivalent to the localization of 
$\cC$ with respect to the multiplicative system of \emph{isogenies}, 
i.e., of morphisms with finite kernel and cokernel. Also, $\cC/\cF$ is 
equivalent to its full subcategory $\ucC$ with objects the smooth 
connected algebraic groups (see \cite[Lem.~3.1]{Brion-II}). 
We say that $\ucC$ is the 
\emph{isogeny category of algebraic groups}. Every object of 
$\ucC$ is artinian and noetherian, i.e., $\ucC$ 
\emph{is a finite length category} (see \cite[Prop.~3.2]{Brion-II}). 

Let $G$ be an algebraic group, with group law denoted additively.
For any integer $n$, we have the multiplication map
\[n_G : G \longrightarrow G, \quad x \longmapsto n x. \]
We say that $G$ is \emph{divisible}, if $n_G$ is an epimorphism
for any $n \neq 0$. When $\char(k) = 0$, this is equivalent to 
$G$ being connected; when $\char(k) > 0$, the divisible algebraic 
groups are the semi-abelian varieties (these will be discussed 
in detail in \S \ref{subsubsec:sav}). 

If $G$ is divisible, then the natural map 
\[ \bZ \longrightarrow \End_{\cC}(G), \quad n \longmapsto n_G \]
extends to a homomorphism $\bQ \to \End_{\ucC}(G)$; in other terms,
$\End_{\ucC}(G)$ is a $\bQ$-algebra. As a consequence, 
$\Ext^n_{\ucC}(G,G')$ is a $\bQ$-vector space for any divisible 
algebraic groups $G$, $G'$ and any integer $n \geq 0$. By  
\cite[Prop.~3.6]{Brion-II}), the induced maps
\begin{equation}\label{eqn:divisible}
\Hom_{\cC}(G,G')_{\bQ} \longrightarrow
\Hom_{\ucC}(G,G'), \quad
\Ext^1_{\cC}(G,G')_{\bQ} \longrightarrow
\Ext^1_{\ucC}(G,G'),
\end{equation}
are isomorphisms, where we set $M_{\bQ} := M \otimes_{\bZ} \bQ$
for any abelian group $M$.

In particular, \emph{the isogeny category $\ucC$ is $\bQ$-linear 
when $\char(k) = 0$}; then its objects are just the connected 
algebraic groups, and its morphisms are the rational multiples of 
morphisms in $\cC$.

Given an extension of fields $K/k$ and an algebraic group $G$ 
over $k$, we obtain an algebraic group over $K$,  
\[ G_K := G \otimes_k K = G \times_{\Spec(k)} \Spec(K), \]
by extension of scalars. The assignment $G \mapsto G_K$ extends 
to the \emph{base change functor} 
\[ \otimes_k \, K: \cC = \cC_k \longrightarrow \cC_K, \]
which is exact and faithful. Also, $G$ is finite if and only if 
$G_K$ is finite. 
As a consequence, we obtain a base change functor, still denoted by 
\[ \otimes_k \, K : \ucC_k \longrightarrow \ucC_K, \] 
and which is still exact and faithful. When $K/k$ is purely inseparable, 
the above functor is an equivalence of categories
(see \cite[Thm.~3.11]{Brion-II}). We say that $\ucC_k$ is 
\emph{invariant under purely inseparable field extensions}.

Thus, to study $\ucC_k$ when $\char(k) = p > 0$, we may replace $k$ 
with its \emph{perfect closure}, 
$k_i := \bigcup_{n \geq 0} k^{1/p^n} \subset \bar{k}$, 
and hence assume that $k$ is perfect. This will be very useful,
since the structure of algebraic groups is much better understood
over a perfect ground field (see e.g. \S \ref{subsubsec:ugpc}).

\subsubsection{Linear algebraic groups, affine group schemes
\cite[III.3]{DG}}
\label{subsubsec:lag}

A (possibly non-commutative) algebraic group $G$ is called 
\emph{linear} if $G$ is isomorphic to a subgroup scheme of 
the general linear group $\GL_n$ for some integer $n > 0$; 
this is equivalent to $G$ being affine (see e.g.~\cite[Prop.~3.1.1]{Brion}). 
The smooth linear algebraic groups are the ``linear algebraic groups 
defined over $k$'' in the sense of \cite{Borel}.

For any exact sequence 
\[ 0 \longrightarrow G_1 \longrightarrow G \longrightarrow G_2 
\longrightarrow 0 \] 
in $\cC$, the group $G$ is affine if and only if $G_1$ and $G_2$ 
are affine (see e.g.~\cite[Prop.~3.1.2]{Brion}). Thus, the 
(commutative) linear algebraic groups form a Serre subcategory 
$\cL$ of $\cC$, which contains $\cF$.

The property of being affine is also invariant under field extensions 
and isogenies, in the following sense: an algebraic group $G$ is affine 
if and only if $G_K$ is affine for some field extension $K$ of $k$, 
if and only if $H$ is affine for some isogeny $f : G \to H$. 
It follows that the quotient category $\cL/\cF$ is equivalent 
to its full subcategory $\ucL$ with objects the smooth 
connected linear algebraic groups. Moreover, $\ucL$ is invariant 
under purely inseparable field extensions.

The affine $k$-group schemes (not necessarily of finite type)
form an abelian category $\tcL$, containing $\cL$ as a Serre 
subcategory. Moreover, every affine group scheme $G$ is 
the filtered inverse limit of linear algebraic groups,
quotients of $G$ (see \cite[III.3.7.4, III.3.7.5]{DG}). In fact,
$\tcL$ is the pro-completion of the abelian category $\cL$, 
in the sense of \cite[V.2.3.1]{DG}.

We say that a group scheme $G$ is \emph{pro-finite}, if
$G$ is an inverse limit of finite group schemes; equivalently,
$G$ is affine and every algebraic quotient group of $G$ is finite. 
The pro-finite group schemes form a Serre subcategory 
$\widetilde{\cF \cL}$ of $\tcL$. The quotient category 
$\tcL/\widetilde{\cF \cL}$ is the \emph{isogeny category 
of affine group schemes}; it contains $\cL/\cF$ as 
a Serre subcategory.

\subsubsection{Groups of multiplicative type 
\cite[IV.1]{DG}}
\label{subsubsec:gmt}

The invertible diagonal matrices form a subgroup scheme
$\D_n \subset \GL_n$, which is commutative, smooth 
and connected; moreover, $\D_1= \GL_1$ is isomorphic to 
the multiplicative group $\bG_m$, and $\D_n \cong \bG_m^n$ 
(the product of $n$ copies of $\bG_m$).
An algebraic group $G$ is said to be \emph{diagonalizable},
if $G$ is isomorphic to a subgroup of $\D_n$ for some $n$.
Also, $G$ is called \emph{of multiplicative type} (resp.~a 
\emph{torus}), if the base change $G_{\bar{k}}$ is diagonalizable 
(resp.~isomorphic to some $\D_{n,\bar{k}}$). Both properties are invariant
under field extensions, but not under isogenies. Also, the tori are 
the smooth connected algebraic groups of multiplicative type.
The diagonalizable algebraic groups (resp.~the algebraic groups 
of multiplicative type) form a Serre subcategory $\cD$ (resp.~$\cM$) 
of $\cC$. 

For any diagonalizable algebraic group $G$, the \emph{character group}
\[ \bfX(G) := \Hom_{\cC}(G,\bG_m) \]
is a finitely generated abelian group. Moreover, the assignment
$G \mapsto \bfX(G)$ extends to an anti-equivalence of categories
\[ \bfX : \cD \longrightarrow \bZ\-\Modfg, \]
where the right-hand side denotes the category of finitely generated 
abelian groups (see \cite[IV.1.1]{DG} for these results).

Given an algebraic group of multiplicative type $G$, there exists 
a finite Galois extension of fields $K/k$ such that $G_K$ 
is diagonalizable. Thus, $G_{k_s}$ is diagonalizable, where $k_s$ 
denotes the \emph{separable closure} of $k$ in $\bar{k}$. Let 
\[ \Gamma := \Gal(k_s/k) = \Aut(\bar{k}/k) \] 
denote the \emph{absolute Galois group} of $k$. 
Then $\Gamma$ is a pro-finite topological group, the inverse limit of 
its finite quotients $\Gal(K/k)$, where $K$ runs over the finite
Galois field extensions of $k$. Also, $\Gamma$ acts on 
the character group,
\[ \bfX(G) := \Hom_{\cC_{k_s}}(G_{k_s},\bG_{m,k_s}), \]
and the stabilizer of any character is an open subgroup. Thus,
$\bfX(G)$ is a \emph{discrete Galois module} in the sense of 
\cite[\S 2.1]{Serre-GC}. Moreover, $G$ is diagonalizable if and
only if $\Gamma$ fixes $\bfX(G)$ pointwise; then the base change map
\[ \Hom_{\cC_k}(G,\bG_{m,k}) \longrightarrow 
\Hom_{\cC_{k_s}}(G_{k_s},\bG_{m,k_s}) \]
is an isomorphism, i.e., the two notions of character groups 
are compatible. Furthermore, $G$ is a torus (resp.~finite) 
if and only if the abelian group $\bfX(G)$ is free (resp.~finite); also, 
note that the tori are the divisible algebraic groups of multiplicative 
type.

The above assignment $G \mapsto \bfX(G)$ yields an anti-equivalence
of categories
\begin{equation}\label{eqn:cartier} 
\bfX: \cM \longrightarrow \bZ \Gamma\-\Modfg 
\end{equation}
(\emph{Cartier duality}), where the right-hand side denotes 
the category of discrete $\Gamma$-modules which are finitely
generated as abelian groups. Moreover, the abelian group
$\Hom_{\cC}(T,T')$ is free of finite rank, for any tori $T$ and $T'$
(see \cite[IV.1.2, IV.1.3]{DG} for these results).

Consider the full subcategory $\cF \cM$ of $\cM$ with objects 
the finite group schemes of multiplicative type. Then $\cF \cM$ 
is a Serre subcategory of $\cM$, anti-equivalent via $\bfX$ to 
the category of finite discrete $\Gamma$-modules. Moreover, 
the quotient category $\cM/\cF \cM$ is equivalent to its full subcategory 
$\ucT$ with objects the tori, and we have an anti-equivalence 
of categories
\begin{equation}\label{eqn:qcartier} 
\bfX_\bQ : \ucT \longrightarrow \bQ \Gamma\-\mod, \quad
T \longmapsto \bfX(T)_{\bQ}.
\end{equation}
Here $\bQ \Gamma$-$\mod$ denotes the category of finite-dimensional
$\bQ$-vector spaces equipped with a discrete linear action of $\Gamma$;
note that $\bQ \Gamma$-$\mod$ is semi-simple, $\bQ$-linear and 
invariant under purely inseparable field extensions.
In view of (\ref{eqn:divisible}), this yields natural isomorphisms
\[  \Hom_{\ucT}(T,T') \cong \Hom_{\cC}(T,T')_{\bQ}
\cong \Hom^{\Gamma}(\bfX(T')_{\bQ}, \bfX(T)_{\bQ}) \]
for any tori $T$, $T'$.  As a consequence, 
\emph{the isogeny category $\ucT$ is semi-simple, 
$\bQ$-linear, Hom-finite, and invariant under purely inseparable 
field extensions}. 

Next, we extend the above results to affine $k$-group schemes,
not necessarily algebraic, by using again results of 
\cite[IV.1.2, IV.1.3]{DG}. We say that an affine group scheme 
$G$  is of multiplicative type, if so are all its
algebraic quotient groups. Denote by $\tcM$ the full subcategory
of $\tcL$ with objects the group schemes of multiplicative
type; then $\tcM$ is a Serre subcategory of $\tcL$. Moreover, 
the Cartier duality (\ref{eqn:cartier}) extends to an anti-equivalence 
of categories
\begin{equation}\label{eqn:tmult} 
\bfX : \tcM \longrightarrow \bZ \Gamma\-\Mod, 
\end{equation}
where $\bZ \Gamma$-$\Mod$ stands for the category of all discrete 
$\Gamma$-modules. Note that $\bZ \Gamma$-$\Mod$ is an abelian 
category, containing $\bZ \Gamma$-$\Modfg$ as a Serre subcategory.

Consider the full subcategory 
$\bZ \Gamma$-$\Modtors \subset \bZ \Gamma$-$\Mod$
with objects the discrete $\Gamma$-modules which are
torsion as abelian groups. Then $\bZ \Gamma$-$\Modtors$ is a Serre
subcategory of $\bZ \Gamma$-$\Mod$, anti-equivalent via $\bfX$ 
to the full subcategory $\widetilde{\cF \cM} \subset \tcM$ with objects
the pro-finite group schemes of multiplicative type.

For any $M \in \bZ \Gamma$-$\Mod$, the kernel and cokernel of
the natural map $M \to M_{\bQ}$ are torsion. It follows
readily that the induced covariant functor 
$\bZ \Gamma\-\Mod \to \bQ \Gamma\-\Mod$ yields an equivalence of
categories
\[ (\bZ \Gamma\-\Mod)/ (\bZ\Gamma\-\Modtors) 
\stackrel{\cong}{\longrightarrow} \bQ \Gamma \-\Mod, \]
where $\bQ \Gamma$-$\Mod$ denotes the category of all
$\bQ$-vector spaces equipped with a discrete linear action of
$\Gamma$. Thus, the category $\bQ \Gamma$-$\Mod$ is anti-equivalent 
to $\tcM/ \cF \tcM$. In turn, the latter category is equivalent
to its full subcategory with objects the inverse limits
of tori: the \emph{isogeny category of pro-tori}, that we denote
by $\tucT$. Clearly, the category $\bQ \Gamma$-$\Mod$ is 
semi-simple. Thus, $\tucT$ is semi-simple as well; its simple
objects are the simple tori, i.e., the tori $G$ such that every subgroup
$H \subsetneq G$ is finite.

As in Subsection \ref{subsec:rmc}, choose representatives $T$ of the
set $I$ of isomorphism classes of simple tori, and let 
$D_T := \End_{\ucT}(T)^{\op}$; then each $D_T$ is a division ring
of finite dimension over $\bQ$. Let 
\[ R_{\ucT} := \bigoplus_{T \in I} D_T, \] 
then we have an equivalence of categories 
\[ \ucT \stackrel{\cong}{\longrightarrow} R_{\ucT}\-\mod \]
which extends to an equivalence of categories
\[ \tucT \stackrel{\cong}{\longrightarrow} R_{\ucT}\-\Modss. \]
Note finally that $\tucT$ is equivalent to a Serre subcategory 
of the isogeny category $\tucL$.

\subsubsection{Unipotent groups, structure of linear groups
\cite[IV.2, IV.3]{DG}}
\label{subsubsec:ugslg}

The upper triangular matrices with all diagonal entries equal to 
$1$ form a subgroup scheme $\U_n \subset \GL_n$, which is smooth 
and connected; moreover, $\U_1$ is isomorphic to the additive
group $\bG_a$. An algebraic group $G$ is called \emph{unipotent}
if $G$ is isomorphic to a subgroup of $\U_n$ for some $n$.
The (commutative) unipotent algebraic groups form a Serre 
subcategory $\cU$ of $\cL$. Also, the property of being unipotent 
is invariant under field extensions, in the sense of \S \ref{subsubsec:lag}.

We say that an affine group scheme $G$ is unipotent, if so are
all algebraic quotients of $G$ (this differs from the definition
given in \cite[IV.2.2.2]{DG}, but both notions are equivalent
in view of \cite[IV.2.2.3]{DG}). The unipotent group schemes
form a Serre subcategory $\tcU$ of $\tcL$, which is the pro-completion
of $\cU$ (as defined in \cite[V.2.3.1]{DG}).

By \cite[IV.2.2.4, IV.3.1.1]{DG}, every affine group scheme $G$ 
lies in a unique exact sequence
\begin{equation}\label{eqn:mu} 
0 \longrightarrow M \longrightarrow G \longrightarrow U 
\longrightarrow 0, 
\end{equation}
where $M$ is of multiplicative type and $U$ is unipotent;
moreover, $\Hom_{\tcL}(M,U) = 0 = \Hom_{\tcL}(U,M)$.
Thus, $(\tcM,\tcU)$ (resp.~$(\cM,\cU)$) is a torsion pair of 
Serre subcategories of $\tcL$ (resp.~$\cL$), as considered in 
Subsection \ref{subsec:tp}.

If the field $k$ is perfect, then the exact sequence (\ref{eqn:mu}) 
has a unique splitting (see \cite[IV.3.1.1]{DG}). 
It follows that the assignment $(M,U) \mapsto M \times U$ 
yields equivalences of categories
\[ \tcM \times \tcU \stackrel{\cong}{\longrightarrow} \tcL, 
\quad  \cM \times \cU \stackrel{\cong}{\longrightarrow} \cL. \] 
In turn, this yields equivalences of isogeny categories
\begin{equation}\label{eqn:prod} 
\tucT \times \tucU \stackrel{\cong}{\longrightarrow} \tucL,
\quad
\ucT \times \ucU \stackrel{\cong}{\longrightarrow} \ucL. 
\end{equation}
In fact, \emph{the latter equivalences hold over an arbitrary field}, 
as $\ucT$, $\ucU$ and $\ucL$ are invariant under purely inseparable
field extensions. 

We now assume that $\char(k) = 0$. Then every unipotent algebraic 
group $G$ is isomorphic to the direct sum of $n$ copies of $\bG_a$, 
where $n := \dim(G)$. In particular, $G$ is isomorphic as a scheme 
to the affine space $\bA^n$, and hence is smooth and connected. 
Moreover, every morphism of unipotent groups $f : G \to H$ is linear 
in the corresponding coordinates $x_1,\ldots,x_n$ on $G$. Thus, 
\emph{the category $\cU$ is equivalent to the category $k$-$\mod$ 
of finite-dimensional $k$-vector spaces}. This extends to 
an equivalence of $\tcU$ to the category $k$-$\Mod$ of all 
$k$-vector spaces (see \cite[IV.2.4.2]{DG}).
 
As every finite unipotent group is trivial, $\cU$ and $\tcU$ are
their own isogeny categories; they are obviously semi-simple and 
$k$-linear, and $\cU$ is Hom-finite. In view of the equivalence
(\ref{eqn:prod}), it follows that 
\emph{$\ucL$ and $\tucL$ are semi-simple}.

\subsubsection{Unipotent groups in positive characteristics
\cite[V.1, V.3]{DG}}
\label{subsubsec:ugpc} 
 
Throughout this paragraph, we assume that $\char(k) = p > 0$; then every 
unipotent algebraic group $G$ is $p$-torsion. The structure of 
these groups is much more complicated than in characteristic $0$. 
For example, the additive group $\bG_a$ admits many finite subgroups, 
e.g., the (schematic) kernel of the Frobenius endomorphism 
\[ F: \bG_a \longrightarrow \bG_a, \quad x \longmapsto x^p. \]
The ring $\End_{\cU}(\bG_a)$ is generated by $k$ (acting by scalar
multiplication) and $F$, with relations $F x - x^p F = 0$ for any
$x \in k$ (see \cite[II.3.4.4]{DG}).

Assume in addition that $k$ is perfect. Then the categories 
$\cU$ and $\tcU$ may be described in terms of modules over the 
\emph{Dieudonn\'e ring} $\bD$ (see \cite[V.1]{DG}). More specifically, 
$\bD$ is a noetherian domain, generated by the ring of Witt vectors 
$W(k)$, the Frobenius $F$ and the Verschiebung $V$; also,
$R$ is non-commutative unless $k = \bF_p$. The left ideal 
$\bD V\subset \bD$ is two-sided, and the quotient ring
$\bD/\bD V$ is isomorphic to $\End_{\cU}(\bG_a)$. More generally,
for any positive integer $n$, the left ideal $\bD V^n$ is two-sided
and $\bD/\bD V^n \cong \End_{\cU}(W_n)$, where $W_n$ denotes the 
\emph{group of Witt vectors of length} $n$; this is a smooth connected
unipotent group of dimension $n$, which lies in an exact sequence
\[ 0 \longrightarrow W_n \longrightarrow W_{n+1} \longrightarrow
\bG_a \longrightarrow 0. \]
The $\End_{\cU}(\bG_a)$-module $\Ext^1_{\cU}(\bG_a,W_n)$ is freely
generated by the class of the above extension. Moreover, the
assignment 
\[ G \longmapsto \bfM(G) := \varinjlim \Hom_{\cU}(G,W_n) \]
extends to an anti-equivalence $\bfM$ of $\tcU$ with the full subcategory 
of $\bD$-$\Mod$ with objects $V$-torsion modules.
Also, $G$ is algebraic (resp.~finite) if and only if 
$\bfM(G)$ is finitely generated (resp.~of finite length); we have
$\bfM(W_n) = \bD/\bD V^n$ for all $n$. As a consequence, $\bfM$
restricts to an anti-equivalence of $\cU$ with the full subcategory of
$\bD$-$\Mod$ with objects the finitely generated modules $M$ 
which are $V$-torsion.

This yields a description of the isogeny categories $\ucU, \tucU$ 
in terms of module categories. Let $S := \bD \setminus \bD V$;
then we may form the left ring of fractions 
\[ S^{-1} \bD =: R = R_{\ucU} \]
by \cite[V.3.6.3]{DG}. This is again a (generally non-commutative) 
noetherian domain; its left ideals are the two-sided $R V^n$ in view 
of \cite[V.3.6.11]{DG}. In particular, $R$ has a unique maximal
ideal, namely, $R V$; moreover, the quotient ring
$R/ R V$ is isomorphic to the division ring of fractions 
of $\End_{\cU}(\bG_a)$. Thus, $R$ is a  discrete valuation domain
(not necessarily commutative), as considered in \cite{KT}. 
By \cite[V.3.6.7]{DG}, a morphism of unipotent group schemes 
$f : G \to H$ is an isogeny if and only if the associated morphism 
$S^{-1} \bfM(f) : S^{-1} \bfM(H) \to S^{-1} \bfM(G)$ is an isomorphism. 
As a consequence, $S^{-1} \bfM$ yields an anti-equivalence of 
$\ucU$ (resp.~$\tucU$) with $R$-$\mod$ (resp.~$R$-$\Modtors$), 
where the latter denotes the full subcategory of $R$-$\Mod$ 
with objects the $V$-torsion modules.

We now show that the abelian category $R$-$\Modtors$ is hereditary, 
and has enough projectives and a unique indecomposable projective object.
Let $M \in R$-$\Modtors$ and choose an exact sequence in $R$-$\Mod$
\[ 0 \longrightarrow M \longrightarrow I \longrightarrow J
\longrightarrow 0, \]
where $I$ is injective in $R$-$\Mod$; equivalently, the multiplication 
by $V$ in $I$ is surjective. Thus, $J$ is injective in $R$-$\Mod$ 
as well. Let $I^{\tors} \subset I$ be the largest $V$-torsion submodule. 
Then we have an exact sequence in $R$-$\Mod$
\[ 0 \longrightarrow M \longrightarrow I^{\tors} 
\longrightarrow J_{\tors} \longrightarrow 0. \]
Moreover, $I^{\tors}$, $J^{\tors}$ are injective in $R$-$\Mod$,
and hence in $R$-$\Modtors$ as well. As the injective objects
of $R$-$\Mod$ are direct sums of copies of the division ring of
fractions $K := \Fract(R)$ and of the quotient $K/R$
(see e.g.~\cite[Thm.~6.3]{KT}), it follows that the abelian category 
$R$-$\Modtors$ is hereditary and has enough injectives; moreover, 
it has a unique indecomposable injective object, namely,
\[ K/R = \varinjlim RV^{-n}/R \cong \varinjlim R/R V^n \]
(the injective hull of the simple module). Also, note that 
$R$-$\Modtors$ is equipped with a duality (i.e., an involutive
contravariant exact endofunctor), namely, the assignment
$M \mapsto \Hom_{R}(M, K/R)$. As a consequence, we obtain
an equivalence of $\tucU$ with $R$-$\Modtors$, which restricts
to an equivalence of $\ucU$ with $R$-mod.

Thus, \emph{$\tucU$ is hereditary and has enough projectives; 
its unique indecomposable projective object is $W := \varprojlim W_n$}.
Also, by \cite[V.3.6.11]{DG} (see also \cite[Thm.~4.8]{KT}), 
every unipotent algebraic group is isogenous to 
$\bigoplus_{n \geq 1} a_n W_n$ for uniquely determined integers $a_n \geq 0$. 
In other terms, \emph{every indecomposable object of $\ucU$ 
is isomorphic to $W_n$ for a unique $n \geq 1$}. 

Note finally that the above structure results for $\ucU$ extend 
to an arbitrary field $k$ of characteristic $p$, by invariance 
under purely inseparable field extensions. More specifically, 
$\ucU$ is equivalent to $R$-mod, where $R$ denotes the ring 
constructed as above from the perfect closure $k_i$.

\subsection{More isogeny categories}
\label{subsec:mic}

\subsubsection{Abelian varieties \cite{Milne-III}}
\label{subsubsec:av}

An \emph{abelian variety} is a smooth, connected algebraic group
$A$ which is proper as a $k$-scheme. Then $A$ is a projective variety 
and a divisible commutative group scheme; its group law will be denoted 
additively. Like for tori, the abelian group $\Hom_{\cC}(A,A')$ is free 
of finite rank for any abelian varieties $A$ and $A'$. Moreover, 
we have the Poincar\'e complete reducibility theorem: for any abelian
variety $A$ and any abelian subvariety $B \subset A$, there exists
an abelian subvariety $C \subset A$ such that the map
\[ B \times C \longrightarrow A, \quad (x,y) \longmapsto x + y \]
is an isogeny.
 
We denote by $\cP$ the full subcategory of $\cC$ with objects
the proper algebraic groups; then $\cP$ is a Serre subcategory
of $\cC$, containing $\cF$ and invariant under field extensions.
Moreover, the quotient category $\cP/\cF$ is equivalent to its
full subcategory $\ucA$ with objects the abelian varieties.

By (\ref{eqn:divisible}), we have an isomorphism
\[ \Hom_{\cC}(A,A')_{\bQ} \stackrel{\cong}{\longrightarrow} 
\Hom_{\ucA}(A,A') \] for any abelian varieties $A,A'$. 
Also, the base change map
\[ \Hom_{\cC_k}(A,A') \longrightarrow \Hom_{\cC_K}(A_K,A'_K) \] 
is an isomorphism for any extension of fields $K/k$ such that 
$k$ is separably closed in $K$ (see \cite[Thm.~3.19]{Conrad-II} 
for a modern version of this classical result of Chow). 

In view of the above results, 
\emph{the abelian category $\ucA$ is semi-simple, $\bQ$-linear, 
Hom-finite, and invariant under purely inseparable field extensions.
Also, $\ucA$ is a Serre subcategory of $\ucC$.} 

Like for tori again, $\ucA$ is equivalent to the category of 
all left modules of finite length over the ring
\[ R_{\ucA} := \bigoplus_{A \in J} D_A, \] 
where $J$ denotes the set of isogeny classes of simple 
abelian varieties, and we set $D_A := \End_{\ucA}(A)^{\op}$ for chosen 
representatives $A$ of the classes in $J$. Moreover, each $D_A$
is a division algebra of finite dimension over $\bQ$. Such an 
endomorphism algebra is a classical object, considered e.g. in 
\cite[Chap.~IV]{Mumford} and \cite{Oort88} where it is denoted
by $\End^0_k(A)$. The choice of a polarization of $A$ yields 
an involutory anti-automorphism of $A$ (the Rosati involution),
and hence an isomorphism of $D_A$ with its opposite algebra.

\subsubsection{General algebraic groups, quasi-compact group 
schemes \cite{Brion-II, Perrin, Perrin-II}}
\label{subsubsec:gagqc}

The linear algebraic groups form the building blocks for all
connected algebraic groups, together with the abelian varieties.
Indeed, we have \emph{Chevalley's structure theorem}: for any 
connected algebraic group $G$, there exists an exact sequence 
\[ 0 \longrightarrow L \longrightarrow G  \longrightarrow A 
 \longrightarrow 0, \]
where $L$ is linear and $A$ is an abelian variety. Moreover, 
there is a unique smallest such subgroup $L \subset G$, and 
this group is connected. If $G$ is smooth and $k$ is perfect, 
then $L$ is smooth as well (see \cite{Conrad, Brion} for modern 
expositions of this classical result). 

Returning to an arbitrary ground field $k$, it is easy to see that
$\Hom_{\cC}(A,L) = 0$ for any abelian variety $A$ and
any linear algebraic group $L$; also, the image of any 
morphism $L \to A$ is finite (see e.g.~\cite[Prop.~2.5]{Brion-II}).

It follows that $(\ucL,\ucA)$ is a torsion pair of Serre subcategories in 
$\ucC$, and we have $\Hom_{\ucC}(A,L) = 0$ for all $A \in \ucA$, $L \in \ucL$.
Therefore, $\Ext^1_{\ucC}(L,A) = 0$ for all such $A$, $L$ by Lemma 
\ref{lem:exact}. In view of Chevalley's structure theorem and the 
vanishing of $\Ext^1_{\ucC}(A',A)$ for all $A,A' \in \ucA$, we obtain 
that  $\Ext^1_{\ucC}(G,A) = 0$ for all $G \in \ucC$ and $A \in \ucA$.
Thus, \emph{every abelian variety is injective in $\ucC$} 
(see \cite[Thm.~5.16]{Brion-II} for the determination of the injective 
objects of $\ucC$). 

By (\ref{eqn:prod}), we have $\Ext^1_{\ucC}(T,U) = 0 = \Ext^1_{\ucC}(U,T)$ 
for all $T \in \ucT$, $U \in \ucU$. Also, recall that $\Ext^1_{\ucC}(T,A) = 0$ 
for all $A \in \ucA$ and $\Ext^1_{\ucC}(T,T') = 0$ for all $T' \in \ucT$. 
By Chevalley's structure theorem again, it follows that 
$\Ext^1_{\ucC}(T,G) = 0$ for all $G \in \ucC$. Thus, 
\emph{every torus is projective in $\ucC$}. If $\char(k) = 0$, then
$\ucL$ is semi-simple, as seen in \S \ref{subsubsec:ugslg}.
In view of Corollary \ref{cor:hereditary}, it follows that every 
linear algebraic group is projective in $\ucC$ (see 
\cite[Thm.~5.14]{Brion-II} for the determination of the projective 
objects of $\ucC$ in arbitrary characteristics).

We now adapt the above results to the setting of \emph{quasi-compact}
group schemes. Recall that a scheme is quasi-compact if every open
covering admits a finite refinement. Every affine scheme is quasi-compact,
as well as every scheme of finite type (in particular, every
algebraic group). Also, every connected group scheme is quasi-compact
(see \cite[II.2.4, II.2.5]{Perrin} or \cite[VIA, Thm.~2.6.5]{SGA3}). 
The quasi-compact (commutative) group schemes form an abelian category 
$\tcC$, containing $\cC$ as a Serre subcategory. Moreover, every 
$G \in \tcC$ is the limit of a filtered inverse system 
$((G_i)_{i \in I}, (u_{ij} : G_j \to G_i)_{i \leq j})$
such that the $G_i$ are algebraic groups and the $u_{ij}$ are affine
morphisms (see \cite[V.3.1, V.3.6]{Perrin}). Also, there is a unique
exact sequence in $\tcC$
\[ 0 \longrightarrow G^0 \longrightarrow G \longrightarrow F
\longrightarrow 0, \]
where $G^0$ is connected and $F$ is \emph{pro-\'etale} 
(i.e., a filtered inverse limit of finite \'etale group schemes); see 
\cite[II.2.4, V.4.1]{Perrin}. Finally, there is an exact sequence
as in Chevalley's structure theorem
\[ 0 \longrightarrow H \longrightarrow G^0 \longrightarrow A
\longrightarrow 0, \]
where $H$ is an affine group scheme, and $A$ an abelian variety
(see \cite[V.4.3.1]{Perrin}).

Note that $\tcC$ is not the pro-completion of the abelian category
$\cC$, as infinite products do not necessarily exist in $\tcC$.
For example, the product of infinitely many copies of a non-zero 
abelian variety $A$ is not represented by a scheme (this may be checked 
by arguing as in \cite[91.48]{SP}, with the morphism $\SL_2 \to \bP^1$
replaced by the surjective smooth affine morphism $V \to A$, 
where $V$ denotes the disjoint union of finitely many open affine 
subschemes covering $A$).  

We define the \emph{isogeny category of quasi-compact group schemes},
$\tucC$, as the quotient category of $\tcC$ by the Serre subcategory
$\widetilde{\cF \cC} = \widetilde{\cF \cL}$ of pro-finite group schemes. 
Every object of $\tucC$ is isomorphic to an extension of an abelian variety 
$A$ by an affine group scheme $H$. Moreover, $\Hom_{\tucC}(A,H) = 0$
and the image of every morphism $f : H \to A$ is finite
(indeed, $f$ factors through a closed immersion $H/\Ker(f) \to A$
by \cite[V.3.3]{Perrin}). As a consequence, $(\tucL,\ucA)$ is a
torsion pair of Serre subcategories of $\tucC$; moreover, 
$\Hom_{\tucC}(A,H) = 0$ for all $A \in \ucA$, $H \in \tucL$.

Like for the category $\ucC$, it follows that every abelian variety
is projective in $\tucC$, and every pro-torus is injective; when
$\char(k) = 0$, every affine group scheme is projective.

\subsubsection{Vector extensions of abelian varieties
\cite[5.1]{Brion-II}}
\label{subsubsec:veav}

The objects of the title are the algebraic groups $G$ obtained as 
extensions
\begin{equation}\label{eqn:vext}
0 \longrightarrow U \longrightarrow G \longrightarrow A \longrightarrow 0, 
\end{equation}
where $A$ is an abelian variety and $U$ is a \emph{vector group},
i.e., $U \cong n \bG_a$ for some $n$. As
$\Hom_{\cC}(U,A) = 0 = \Hom_{\cC}(A,U)$ 
(see \S \ref{subsubsec:lag}), the data of $G$ and of the extension
(\ref{eqn:veav}) are equivalent. Also, we have a bi-functorial isomorphism
\begin{equation}\label{eqn:rs}
\Ext^1_{\cC}(A,\bG_a) \stackrel{\cong}{\longrightarrow}
H^1(A,\cO_A),
\end{equation}
where the right-hand side is a $k$-vector space of
dimension $\dim(A)$ (see \cite[III.17]{Oort66}).

If $\char(k)= p > 0$, then $p_U = 0$ and hence the class of
the extension (\ref{eqn:vext}) is killed by $p$. Thus, this
extension splits after pull-back by the isogeny $p_A : A  \to A$.

From now on, we assume that $\char(k) = 0$; then the vector
extensions of $A$ are the extensions by unipotent groups. 
We denote by $\ucV$ the full subcategory of $\ucC$
with objects the vector extensions of abelian varieties.
By the Chevalley structure theorem (\S \ref{subsubsec:lag})
and the structure of linear algebraic groups 
(\S \ref{subsubsec:ugslg}), a connected algebraic group $G$ is 
an object of $\ucV$ if and only if $\Hom_{\ucC}(T,G) = 0$.
As the functor $\Hom_{\ucC}(T,?)$ is exact, it follows
that $\ucV$ is a Serre subcategory of $\ucC$. Moreover, 
$(\cU,\ucA)$ is a torsion pair of Serre subcategories of $\ucV$; 
they are both semi-simple in view of \S\S \ref{subsubsec:ugslg} 
and \ref{subsubsec:av}. 

By (\ref{eqn:divisible}), we have an isomorphism
\[ \Ext^1_{\cC}(A,U)_{\bQ}
\stackrel{\cong}{\longrightarrow} \Ext^1_{\ucC}(A,U). \]
In view of (\ref{eqn:rs}), this yields bi-functorial isomorphisms
\begin{equation}\label{eqn:veav} 
\Ext^1_{\ucV}(A,U) \cong 
H^1(A,\cO_A) \otimes_k U \cong
\Hom_k(H^1(A,\cO_A)^*, U), 
\end{equation}
where $H^1(A,\cO_A)^*$ denotes of course the dual $k$-vector space. 
Moreover, the assignment $A \mapsto H^1(A,\cO_A)^*$ extends to 
a covariant exact functor
\[ \bfU : \ucA \longrightarrow \cU, \]
as follows e.g.~from \cite[Cor.~5.3]{Brion-II}. 

So the triple $(\ucV,\cU,\ucA)$ satisfies the assumptions 
(a), (b), (c), (d) of Subsection \ref{subsec:ue}, with $\tcX = \cU$ 
and $\bfF = \bfU$. With the notation of \S \ref{subsubsec:av}, set 
\[ M_A := H^1(A,\cO_A)^* \]
for any $A \in J$; then $M_A$ is a $k$-$D_A$-bimodule.

\begin{proposition}\label{prop:veav}
With the above notation, there is an equivalence of categories 
\[ \bfM_{\cV} : \ucV \stackrel{\cong}{\longrightarrow}
R_{\ucV}\-\mod, \]
where $R_{\ucV}$ stands for the triangular matrix ring
\[ \begin{pmatrix}
\bigoplus_{A \in J} D_A & 0 \\
\bigoplus_{A \in J} M_A & k \\
\end{pmatrix}. \]
Moreover, the center of $\ucV$ is $\bQ$.
\end{proposition}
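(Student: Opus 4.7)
The plan is to apply Theorem \ref{thm:equiv} together with Remark \ref{rem:rmc} to the torsion pair $(\cU, \ucA)$ of Serre subcategories of $\ucV$. Assumptions (a)--(c) of Subsection \ref{subsec:ue} have already been established in the paragraphs preceding the proposition: $\cU$ and $\ucA$ are semi-simple Serre subcategories of $\ucV$, and $\Hom_{\ucV}(A, U) = 0$ for $A \in \ucA$, $U \in \cU$. The isomorphism (\ref{eqn:veav}) supplies assumption (d) with $\bfF = \bfU$, in the strong form $\tcX = \cX = \cU$ of Remark \ref{rem:rmc}. That remark then produces an equivalence $\ucV \stackrel{\cong}{\longrightarrow} R\-\mod$ for the triangular matrix ring $R$ built by the construction of Subsection \ref{subsec:rmc}.

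To identify $R$ with the displayed ring $R_{\ucV}$, I would unwind the construction. The category $\cU \cong k\-\mod$ has a unique isomorphism class of simple object, $\bG_a$, with $\End_{\cU}(\bG_a) = k$, so $R_{\cX} = k$. By \S \ref{subsubsec:av} we have $R_{\cY} = R_{\ucA} = \bigoplus_{A \in J} D_A$. Finally, since each simple $A \in J$ corresponds to $D_A$ viewed as a module over itself, the bimodule data collapses to $M = \bigoplus_{A \in J} \bfU(A) = \bigoplus_{A \in J} M_A$, each summand carrying its natural $k$-$D_A$-bimodule structure. Assembling these data gives precisely the ring $R_{\ucV}$ of the statement.

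For the center, I would invoke Proposition \ref{prop:center}(iii). The only simple type in $\cU$ is $\bG_a$ with center $Z_{\bG_a} = k$, while the simple types in $\ucA$ are the $A \in J$ with $Z_A := Z(D_A)$ a number field. The center of $\ucV \cong R_{\ucV}\-\mod$ thus identifies with the set of pairs $(x, (y_A)_{A \in J})$, $x \in k$, $y_A \in Z_A$, subject to $xm = m y_A$ in $M_A$ for every $A \in J$ and every $m \in M_A$. One inclusion is immediate: any $q \in \bQ$ yields such a family via $x = q \in k$ and $y_A = q \in \bQ \subset Z_A$.

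For the reverse inclusion, the main step is to exhibit a non-zero simple $A_0 \in J$ with $D_{A_0} = \bQ$. Since we are in characteristic zero, I would take $A_0$ to be the base change to $k$ of an elliptic curve over $\bQ$ whose geometric endomorphism ring is $\bZ$; endomorphisms cannot grow under further base change, so $\End_{\ucA}(A_0) = \bQ$. Then $y_{A_0} \in Z_{A_0} = \bQ$, and since $M_{A_0} \neq 0$ is a $k$-vector space on which $y_{A_0}$ acts as scalar multiplication by $y_{A_0} \in \bQ \subset k$, the relation $xm = m y_{A_0}$ forces $x = y_{A_0} =: q \in \bQ$. For every other $A \in J$, the constraint becomes that $y_A \in Z_A$ and $q \in \bQ \subset Z_A$ act by the same scalar on $M_A$; since $Z_A$ is a field and $M_A \neq 0$, its action on $M_A$ is faithful, so $y_A = q$. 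The only genuine obstacle in this plan is the production of the auxiliary abelian variety $A_0$, which is where the characteristic-zero hypothesis is used essentially.
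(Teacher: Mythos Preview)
Your proof is correct and follows essentially the same route as the paper: apply Theorem~\ref{thm:equiv} with Remark~\ref{rem:rmc} for the equivalence, then invoke Proposition~\ref{prop:center} and use an elliptic curve without complex multiplication to pin down the center. The paper is terser (it simply cites ``elliptic curves without complex multiplication'' and the non-vanishing of $M_A$), while you spell out the identification of $R$ and the faithfulness of the $Z_A$-action on $M_A$; the one phrase to tighten is ``endomorphisms cannot grow under further base change''---what you really use is that the \emph{geometric} endomorphism ring is already $\bZ$, which bounds the endomorphism ring over any intermediate field.
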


\begin{proof}
The first assertion follows from Theorem \ref{thm:equiv} 
and Remark \ref{rem:rmc}.

By Proposition \ref{prop:center}, the center of $\ucV$ consists
of the pairs $z = (x,(y_A)_{A \in J})$ where $x \in k$, $y_A \in Z_A$ 
(the center of $D_A$) and $x m = m y_A$ for all $m \in M_A$ and 
$A \in J$. In particular, if the simple abelian variety $A$ 
satisfies $D_A = \bQ$, then $x = y_A \in \bQ$. 
As such abelian varieties exist (e.g., elliptic curves without 
complex multiplication), it follows that $x \in \bQ$. 
Then for any $A \in J$, we obtain $y_A = x$ as $M_A \neq 0$. 
So $z \in \bQ$.
\end{proof}

Next, recall that every abelian variety $A$ has a universal 
vector extension $\bfE(A)$, by the vector group $\bfU(A)$.
In view of Proposition \ref{prop:universal}, the projective objects
of $\ucV$ are the products of unipotent groups and
universal vector extensions; moreover, every 
$G \in \ucV$ has a canonical projective resolution,
\begin{equation}\label{eqn:presv} 
0 \longrightarrow \bfU(A) \longrightarrow 
U \times \bfE(A) \longrightarrow G \longrightarrow 0, 
\end{equation}
where $A$ denotes of course the abelian variety quotient of $G$. 
In particular, the abelian category $\ucV$ is hereditary and has
enough projectives. This recovers most of the results in
\cite[Sec.~5.1]{Brion-II}.

\subsubsection{Semi-abelian varieties \cite[5.4]{Brion}}
\label{subsubsec:sav}

These are the algebraic groups $G$ obtained as extensions
\begin{equation}\label{eqn:sav}
0 \longrightarrow T \longrightarrow G \longrightarrow A \longrightarrow 0, 
\end{equation}
where $A$ is an abelian variety and $T$ is a torus.
Like for vector extensions of abelian varieties, we have 
$\Hom_{\cC}(T,A) = 0 = \Hom_{\cC}(A,T)$; thus, the data of $G$ and 
of the extension (\ref{eqn:sav}) are equivalent. 

The Weil-Barsotti formula (see e.g.~\cite[III.17, III.18]{Oort66}) 
yields a bi-functorial isomorphism
\[ \Ext^1_{\cC}(A,T) \stackrel{\cong}{\longrightarrow}
\Hom^{\Gamma}(\bfX(T), \wA(k_s)). \]
Here $\wA$ denotes the dual of $A$; this is an abelian variety
with dimension $\dim(A)$ and with Lie algebra $H^1(A,\cO_A)$.
In view of (\ref{eqn:divisible}), this yields in turn a bi-functorial 
isomorphism
\begin{equation}\label{eqn:wb}
\Ext^1_{\ucC}(A,T) \stackrel{\cong}{\longrightarrow}
\Hom^{\Gamma}(\bfX(T), \wA(k_s)_{\bQ}).
\end{equation}

We denote by $\ucS$ the full subcategory of $\ucC$ with 
objects the semi-abelian varieties. By 
\cite[Lem.~5.4.3, Cor.~5.4.6]{Brion}, $\ucS$ is a Serre subcategory 
of $\ucC$, invariant under purely inseparable field extensions. 
Moreover, $(\ucT,\ucA)$ is a torsion pair of Serre subcategories of 
$\ucS$. The assignment $A \mapsto \wA(k_s)_{\bQ}$ extends
to a contravariant exact functor $\ucA \to \bQ \Gamma$-$\Mod$
in view of \cite[Rem.~4.8]{Brion-II} (see also Lemma 
\ref{lem:points} below). Since $\bQ \Gamma$-$\Mod$
is anti-equivalent to $\tucT$ (\S \ref{subsubsec:gmt}), this yields 
a covariant exact functor
\[ \tbfT : \ucA \longrightarrow \tucT \]
together with a bi-functorial isomorphism
\[ \Ext^1_{\ucC}(A,T) \stackrel{\cong}{\longrightarrow}
\Hom_{\tucT}(\tbfT(A),T). \]
Thus, the triple $(\ucS,\ucT,\ucA)$ satisfies the assumptions
(a), (b), (c), (d) of Subsection \ref{subsec:ue} with $\tcX = \tucT$
and $\bfF = \tbfT$; moreover, the assumption (e) of Subsection 
\ref{subsec:rmc} holds by construction. With the notation of 
\S\S \ref{subsubsec:gmt} and \ref{subsubsec:av}, let 
\[ M_{T,A} := \Hom^{\Gamma}(\bfX(T), \wA(k_s)_{\bQ}) \] 
for all $T \in I$, $A \in J$; then $M_{T,A}$ is a $D_T$-$D_A$-bimodule.
In view of Theorem \ref{thm:equiv}, we obtain:

\begin{proposition}\label{prop:sav}
There is an equivalence of categories
\[\bfM_{\cS} : \ucS \stackrel{\cong}{\longrightarrow}
R_{\ucS}\-\mod, \]
where $R_{\ucS}$ stands for the triangular matrix ring
\[ \begin{pmatrix}
\bigoplus_{A \in J} D_A & 0 \\
\bigoplus_{T \in I, A \in J} M_{T,A} & \bigoplus_{T \in I} D_T. \\
\end{pmatrix}. \]

\end{proposition}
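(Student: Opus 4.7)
The plan is to apply Theorem \ref{thm:equiv} directly to the triple $(\ucS, \ucT, \ucA)$ with $\tcX = \tucT$ and $\bfF = \tbfT$. Hypotheses (a)--(d) of Subsection \ref{subsec:ue} have been verified in the paragraph preceding the statement: $(\ucT, \ucA)$ is a torsion pair of semi-simple Serre subcategories of $\ucS$, $\Hom_{\ucS}(A, T) = 0$, and the bifunctorial isomorphism (\ref{eqn:iso}) comes from combining the Weil--Barsotti formula (\ref{eqn:wb}) with the Cartier anti-equivalence $\bfX_\bQ$, which simultaneously defines $\tbfT$. Hypothesis (e) holds because \S \ref{subsubsec:gmt} provides compatible equivalences $\ucT \cong R_{\ucT}$-$\mod$ and $\tucT \cong R_{\ucT}$-$\Modss$. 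Finally, $\ucS$ has finite length as a Serre subcategory of $\ucC$.

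Theorem \ref{thm:equiv} then delivers an equivalence $\bfM_\cS: \ucS \xrightarrow{\cong} R$-$\mod$ with $R$ the triangular matrix ring built from $R_{\ucA} = \bigoplus_{A \in J} D_A$, $R_{\ucT} = \bigoplus_{T \in I} D_T$, and bimodule $M = \bigoplus_{A \in J} \tbfT(A)$, where each summand is viewed as a semi-simple $R_{\ucT}$-module through the equivalence of \S \ref{subsubsec:gmt} and carries a right $D_A$-action inherited from the functoriality of $\tbfT$ on $\End_{\ucA}(A)^{\op}$. The remaining task is to identify this bimodule with $\bigoplus_{T \in I, A \in J} M_{T,A}$, so that $R$ coincides with $R_{\ucS}$.

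For each simple $A \in J$, the pro-torus $\tbfT(A)$ is determined by Cartier duality via $\bfX_\bQ(\tbfT(A)) = \wA(k_s)_\bQ$. I would decompose the $\Gamma$-module $\wA(k_s)_\bQ$ into isotypical components indexed by the simple $\Gamma$-modules $\bfX(T)_\bQ$ and note that, since $D_T = \End^\Gamma(\bfX(T)_\bQ)$, the multiplicity space of $\bfX(T)_\bQ$ in $\wA(k_s)_\bQ$ is precisely $\Hom^\Gamma(\bfX(T), \wA(k_s)_\bQ) = M_{T,A}$. The bifunctoriality of Weil--Barsotti (\ref{eqn:wb}) in both arguments then forces the left $D_T$-action and right $D_A$-action on $M_{T,A}$ to be exactly those arising from the construction of $M$ in Theorem \ref{thm:equiv}: the $D_T$-action is dictated by the equivalence $\tucT \cong R_{\ucT}$-$\Modss$ composed with Cartier, and the $D_A$-action is dictated by naturality of the bi-functorial isomorphism $\Ext^1_{\ucS}(A, T) \cong \Hom_{\tucT}(\tbfT(A), T) \cong M_{T,A}$ in the variable $A$.

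The main obstacle is precisely this bimodule bookkeeping: since Cartier duality is contravariant and Weil--Barsotti is bifunctorial in two directions, one must follow the $D_T$- and $D_A$-actions carefully through the composition of three equivalences ($\tucT \cong R_{\ucT}$-$\Modss$, $\bfX_\bQ$, and (\ref{eqn:wb})) to confirm that the abstract bimodule produced by Theorem \ref{thm:equiv} matches the concrete $\bigoplus M_{T,A}$ built from Galois modules. There is no new conceptual ingredient beyond the general machinery of Theorem \ref{thm:equiv} and the Weil--Barsotti formula; once the bookkeeping is done, the triangular ring is exactly $R_{\ucS}$ and the proposition follows.
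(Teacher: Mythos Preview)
Your proposal is correct and takes essentially the same approach as the paper: the proposition is stated immediately after the sentence ``In view of Theorem \ref{thm:equiv}, we obtain:'' with no further proof, all hypotheses (a)--(e) having been verified in the preceding paragraphs exactly as you describe. Your explicit bimodule bookkeeping is more detailed than what the paper provides, but the identification of $M$ with $\bigoplus_{T,A} M_{T,A}$ is indeed implicit in the definitions and the bifunctoriality of (\ref{eqn:wb}).
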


\begin{remark}\label{rem:sav}
If $k$ is locally finite (i.e., the union of its finite subfields), 
then the abelian group $A(k)$ is torsion for any abelian variety $A$. 
It follows that $\ucS \cong \ucT \times \ucA$ and 
$R_{\ucS} = R_{\ucT} \times R_{\ucA}$. In particular, the center of 
$R_{\ucS}$ is an infinite direct sum of fields.
 
On the other hand, if $k$ is not locally finite, then the abelian 
group $A(k_s)$ has infinite rank for any non-zero abelian variety $A$
(see  \cite[Thm.~9.1]{FJ}). As a consequence, $A$ admits no universal 
extension in $\ucS$. If in addition $k$ is separably closed, then
the center of $R_{\ucS}$ is $\bQ$, as follows by arguing as in the
proof of Proposition \ref{prop:veav} with $\bG_a$ replaced by $\bG_m$.
We do not know how to determine the center of $R_{\ucS}$ for an
arbitrary (not locally finite) field $k$.
\end{remark}

Next, consider the isogeny category $\tucC$ of quasi-compact
group schemes, and denote by $\tucS \subset \tucC$ the full 
subcategory with objects the group schemes obtained 
as extensions of abelian varieties by pro-tori. 
Then $\ucS$ is a Serre subcategory of $\tucS$;
moreover, $(\tucT,\ucA)$ is a torsion pair of Serre subcategories
of $\tucS$, and $\Hom_{\tucS}(A,T) = 0$ for any abelian variety $A$
and any pro-torus $T$. Thus, $\tucS$ is equivalent to the category of 
extensions of abelian varieties by pro-tori (as defined in Subsection 
\ref{subsec:ce}), and in turn to the category $R_{\ucS}$-$\tmod$ by 
Theorem \ref{thm:equiv}. 

In view of the results of Subsection \ref{subsec:ue}, 
every abelian variety $A$ has a universal extension in $\tucS$, 
by the pro-torus with Cartier dual $\wA(k_s)_{\bQ}$. 
Moreover, the projective objects of $\tucS$ are the products of
pro-tori and universal extensions; also, every 
$G \in \tucS$ has a canonical projective resolution, similar to
(\ref{eqn:presv}). In particular, the abelian category $\tucS$ is
hereditary and has enough projectives.

\subsubsection{General algebraic groups (continued)}
\label{subsubsec:gagc}

We return to the setting of \S \ref{subsubsec:gagqc}, and consider
the isogeny category of algebraic groups, $\ucC$, as a Serre category 
of the isogeny category of quasi-compact group schemes, $\tucC$.
Also, recall from \S\S \ref{subsubsec:ugslg}, \ref{subsubsec:ugpc} 
the isogeny category of unipotent algebraic groups, $\ucU$,
a Serre subcategory of that of unipotent group schemes, $\tucU$.
Likewise, we have the isogeny category of semi-abelian varieties,
$\ucS$, a Serre subcategory of the isogeny category of extensions 
of abelian varieties by pro-tori, $\tucS$. These are the ingredients 
of a structure result for $\ucC$, $\tucC$ in positive characteristics:

\begin{proposition}\label{prop:charp}
If $\char(k) = p > 0$, then the assignment $(S,U) \mapsto S \times U$
extends to equivalences of categories
\[ \ucS \times \ucU \stackrel{\cong}{\longrightarrow} \ucC, \quad
 \tucS \times \tucU \stackrel{\cong}{\longrightarrow} \tucC. \]
\end{proposition}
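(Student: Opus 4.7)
The plan is to verify that $(\tucU, \tucS)$ is a \emph{split} torsion pair of Serre subcategories of $\tucC$, in the sense that all classes in $\Ext^1_{\tucC}(\tucS, \tucU)$ vanish, so that Lemma \ref{lem:equiv} identifies $\tucC$ with the category of extensions of objects of $\tucS$ by objects of $\tucU$, and the latter then reduces to the direct product $\tucU \times \tucS$. The equivalence $\ucS \times \ucU \cong \ucC$ will follow by restriction, since an object of $\tucC$ lies in the Serre subcategory $\ucC$ exactly when both its unipotent and semi-abelian components are algebraic. Using invariance of all categories in sight under purely inseparable field extensions (\S \ref{subsubsec:ag}--\S \ref{subsubsec:sav}), I reduce to a perfect ground field $k$. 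Given $G \in \tucC$, Chevalley's theorem in $\tcC$ (\S \ref{subsubsec:gagqc}) yields an exact sequence $0 \to H \to G \to A \to 0$ with $H$ affine and $A$ abelian, and the decomposition $H \cong M \times U$ of \S \ref{subsubsec:ugslg} singles out a unipotent subgroup $U \in \tucU$ with semi-abelian quotient $S := G/U \in \tucS$.

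To complete the torsion-pair verification I need $\Hom_{\tucC}(\tucS, \tucU) = 0$. Applying $\Hom_{\tucC}(-,U)$ to the defining sequence $0 \to T \to S \to A \to 0$ of a semi-abelian variety reduces this to $\Hom_{\tucC}(T,U) = 0$ and $\Hom_{\tucC}(A,U) = 0$: the first holds in $\tucL \cong \tucT \times \tucU$ (see (\ref{eqn:prod})) and transfers to $\tucC$ by Serre-ness of $\tucL$, while the second holds because any morphism $A \to U$ has image that is both proper (as a quotient of $A$) and affine (as a subgroup of $U$), hence finite and zero in $\tucC$. Running the long exact Ext sequence, the same two ingredients give $\Ext^1_{\tucC}(T, U) = 0$ and leave me with the critical term $\Ext^1_{\tucC}(A, U)$. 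For algebraic $U$ this vanishing is elementary: $U$ is annihilated by some $p^N$ (\S \ref{subsubsec:ugpc}), while multiplication by $p$ is an automorphism of $A$ in the isogeny category, so $p^N$ acts both as zero and as an isomorphism on $\Ext^1_{\ucC}(A, U)$, forcing it to vanish. For pro-unipotent $U$ such as $W = \varprojlim W_n$, however, no finite power of $p$ annihilates $U$, and this naive torsion argument fails; this is the main obstacle, and it is resolved by the substantive fact recorded in \S \ref{subsubsec:gagqc} that every abelian variety is projective in $\tucC$. This is precisely the step where moving from $\ucC$ to the enlargement $\tucC$ pays off.

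With $\Ext^1_{\tucC}(\tucS, \tucU) = 0$ established, Lemma \ref{lem:equiv} identifies $\tucC$ with the category of extensions associated to the torsion pair $(\tucU, \tucS)$, which collapses to the direct product $\tucU \times \tucS$ since every extension class is zero; the assignment $(U, S) \mapsto U \times S$ supplies the required equivalence. Restricting to the Serre subcategory $\ucC \subset \tucC$, its image under this equivalence consists of those pairs with both components algebraic, i.e., exactly $\ucU \times \ucS$, yielding the companion equivalence $\ucS \times \ucU \cong \ucC$.
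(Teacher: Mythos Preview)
Your argument for the \emph{algebraic} case is essentially the paper's: the observation that $p^N_U = 0$ for some $N$ while $p_S$ is an isomorphism in $\ucC$ forces all $\Hom$ and $\Ext^1$ groups between $\ucU$ and $\ucS$ to vanish, so the extension $0\to U\to G\to S\to 0$ splits uniquely and functorially. The paper states this directly (for all $\Ext^n$ at once) without invoking the torsion-pair formalism, but the content is the same.

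The genuine gap is in your handling of the pro-unipotent case. You invoke ``every abelian variety is projective in $\tucC$'' from \S\ref{subsubsec:gagqc}, but that line is a typo: by the explicit parallel drawn there with $\ucC$ (where abelian varieties are \emph{injective} and tori are projective), the intended statement is that abelian varieties are injective in $\tucC$ and pro-tori are projective. Abelian varieties are certainly not projective in $\tucC$, since $\Ext^1_{\tucC}(A,T)\cong\Hom^{\Gamma}(\bfX(T),\wA(k_s)_{\bQ})$ is generally nonzero (\S\ref{subsubsec:sav}); and injectivity of $A$ says nothing about $\Ext^1_{\tucC}(A,U)$. So your resolution of the ``main obstacle'' does not go through.

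The paper's remedy is to reverse your logical order: establish $\ucS\times\ucU\cong\ucC$ first (which your $p$-torsion argument already does), and then deduce $\tucS\times\tucU\cong\tucC$ from it, using that every object of $\tucC$ is a filtered inverse limit of its algebraic quotients. The functorial and unique splitting $G_i\cong S_i\times U_i$ at each algebraic level passes to the limit, and one never has to compute $\Ext^1_{\tucC}(A,U)$ for genuinely pro-unipotent $U$ directly. (A minor further omission: for $(\tucU,\tucS)$ to be a torsion pair you also need $\Hom_{\tucC}(\tucU,\tucS)=0$, which you do not check; it follows from the same $p$-divisibility argument.)
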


\begin{proof}
The first equivalence is obtained in \cite[Prop.~5.10]{Brion-II}.
We provide an alternative proof: by Chevalley's structure theorem,
every $G \in \ucC$ lies in an exact sequence in $\ucC$
\[ 0 \longrightarrow U \longrightarrow G \longrightarrow S
\longrightarrow 0, \]
where $U \in \ucU$ and $S \in \ucS$. Moreover, we have
$\Ext^n_{\ucC}(U,S) = 0 = \Ext^n_{\ucC}(S,U)$ for all $n \geq 0$,
since the multiplication map $p_S$ is an isomorphism in $\ucC$,
while $p_U^n = 0$ for $n \gg 0$. In particular, the above exact
sequence has a unique splitting, which is functorial in $U$, $S$.

The second equivalence follows from the first one, as every
quasi-compact group scheme is the inverse limit of its algebraic
quotient groups.
\end{proof}
 
Next, we obtain the main result of this paper; for this, we gather 
some notation. Define a ring $R = R_{\ucC}$ by 
\[ R = 
\begin{cases}
R_{\ucS} \times R_{\ucU}, & \text{if } \char(k) > 0 \\
R_{\ucS} \times_{R_{\ucA}} R_{\ucV}, & \text{if } \char(k) = 0.
\end{cases} \]
More specifically, 
\[ R = \begin{pmatrix}
\bigoplus_{A \in J} D_A & 0 \\
\bigoplus_{T \in I, A \in J} M_{T,A} & \bigoplus_{T \in I} D_T \\
\end{pmatrix} 
\times (\bD \setminus \bD V)^{-1} \bD \]
if $\char(k) > 0$, where $I$ (resp.~$J$) denotes the set 
of isogeny classes of simple tori (resp.~of simple abelian 
varieties), $T \in I$ (resp.~$A \in J$) denote representatives 
of their classes, $D_T := \End_{\cT}(T)_{\bQ}^{\op}$,  
$D_A := \End_{\cA}(A)_{\bQ}^{\op}$, and 
$M_{T,A} := \Hom^{\Gamma}(\bfX(T), \wA(k_s)_{\bQ})$. Moreover,
$\bD$ denotes the Dieudonn\'e ring over the perfect
closure of $k$, and $V \in \bD$ the Verschiebung as in 
\S \ref{subsubsec:ugpc}. 

If $\char(k) = 0$, then
\[ R = \begin{pmatrix}
\bigoplus_{A \in J} D_A & 0 \\
(\bigoplus_{T \in I, A \in J} M_{T,A}) \oplus (\bigoplus_{A \in J} M_A)
& (\bigoplus_{T \in I} D_T) \oplus k \\
\end{pmatrix}, \]
where $M_A := H^1(A,\cO_A)^*$. We may now state:

\begin{theorem}\label{thm:main}
With the above notation, the abelian categories $\ucC$ and $\tucC$ 
are hereditary, and $\tucC$ has enough projectives. Moreover,
there are compatible equivalences of categories
\[ \ucC \stackrel{\cong}{\longrightarrow} R\-\mod, \quad 
\tucC \stackrel{\cong}{\longrightarrow} R\-\tmod. \]

\end{theorem}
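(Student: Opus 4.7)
The plan is to split into cases by characteristic and in each case reduce to Theorem \ref{thm:equiv}, combined with the structural descriptions of the semi-abelian, vector-extension, and unipotent summands developed in \S\S \ref{subsubsec:av}--\ref{subsubsec:sav}.

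First I would treat the case $\char(k) = p > 0$. By Proposition \ref{prop:charp}, there are compatible equivalences $\ucC \simeq \ucS \times \ucU$ and $\tucC \simeq \tucS \times \tucU$, so it suffices to handle each factor separately. For the semi-abelian factor, I would apply Theorem \ref{thm:equiv} to the triple $(\tucS,\tucT,\ucA)$: assumptions (a), (b), (c) hold by \S \ref{subsubsec:sav}; assumption (d) is the Weil--Barsotti formula together with the functor $\tbfT : \ucA \to \tucT$ taking values in the semi-simple category $\tucT \simeq R_{\ucT}\-\Modss$; and assumption (e) holds by construction of $R_{\ucT}$. This yields compatible equivalences $\ucS \simeq R_{\ucS}\-\mod$ and $\tucS \simeq R_{\ucS}\-\tmod$, together with the hereditary and projective statements. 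For the unipotent factor, I would invoke the Dieudonn\'e-theoretic equivalences $\ucU \simeq R_{\ucU}\-\mod$ and $\tucU \simeq R_{\ucU}\-\tmod$ recalled in \S \ref{subsubsec:ugpc} (where $R_{\ucU} = (\bD \setminus \bD V)^{-1}\bD$, read with the appropriate variance), together with the fact that $R_{\ucU}\-\Modtors$ is hereditary with enough projectives. Assembling the two factors through $R = R_{\ucS} \times R_{\ucU}$ and using that modules over a product ring split compatibly with $\mod$ and $\tmod$, I get the theorem in positive characteristic.

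Next I would treat $\char(k) = 0$. Here $\ucU$ is already semi-simple (equivalent to $k\-\mod$), so $\ucL \simeq \ucT \times \ucU$ is a semi-simple Serre subcategory of $\ucC$, and $\tucL \simeq \tucT \times \tucU \simeq R_{\ucT}\-\Modss \times k\-\Mod$ is a semi-simple abelian category containing $\ucL$ as a Serre subcategory. The torsion pair is $(\ucL,\ucA)$ in $\ucC$ and in $\tucC$, and assumptions (a), (b), (c) of \S \ref{subsec:ue} hold by Chevalley's theorem and the vanishing $\Hom_{\ucC}(A,L) = 0$ discussed in \S \ref{subsubsec:gagqc}. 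For assumption (d), I combine $\tbfT : \ucA \to \tucT$ (Weil--Barsotti, \S \ref{subsubsec:sav}) and $\bfU : \ucA \to \cU$ (\S \ref{subsubsec:veav}) into a single exact functor $\bfF = \tbfT \oplus \bfU : \ucA \to \tucL$, and the resulting bi-functorial isomorphism
\[ \Ext^1_{\ucC}(A,L) \cong \Hom_{\tucT}(\tbfT(A),T) \oplus \Hom_{k}(\bfU(A),U) \]
(with $L = T \oplus U$) follows from the decomposition $\Ext^1_{\ucC}(A,L) \simeq \Ext^1_{\ucC}(A,T) \oplus \Ext^1_{\ucC}(A,U)$ and the isomorphisms (\ref{eqn:wb}) and (\ref{eqn:veav}). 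Assumption (e) holds by construction for the torus part and trivially (since $\tcX = \cX$ for the unipotent part, as in Remark \ref{rem:rmc}). Now Theorem \ref{thm:equiv} gives the equivalences with $R\-\mod$ and $R\-\tmod$ for the triangular matrix ring built from $R_{\ucA}$, $R_{\ucT} \oplus k$, and the bimodule $\bigoplus M_{T,A} \oplus \bigoplus M_A$; this is exactly the fiber product $R_{\ucS} \times_{R_{\ucA}} R_{\ucV}$ described in the statement. The hereditary property and existence of enough projectives in $\tucC$ also follow from Theorem \ref{thm:equiv}, and heredity of $\ucC$ follows from Lemma \ref{lem:hereditary} since $\ucC$ is a Serre subcategory of $\tucC$.

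The main obstacle I anticipate is the bookkeeping in characteristic zero: one must verify that the combined functor $\bfF = \tbfT \oplus \bfU$ genuinely produces the stated Weil--Barsotti-plus-$H^1(A,\cO_A)^*$ extension data, so that the resulting triangular matrix ring really does coincide with the fiber product $R_{\ucS} \times_{R_{\ucA}} R_{\ucV}$; and one must check that the equivalence with $R\-\tmod$ matches the locally finite definition (semi-simple over the $\tucL$-side, of finite length over the $\ucA$-side). The positive-characteristic case is comparatively routine once Proposition \ref{prop:charp} is in hand, since the two factors are handled independently and combined by forming the direct product ring.
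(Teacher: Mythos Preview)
Your proposal is correct and follows essentially the same approach as the paper: split by characteristic, use Proposition \ref{prop:charp} together with the Dieudonn\'e and semi-abelian descriptions in positive characteristic, and in characteristic zero apply Theorem \ref{thm:equiv} to the torsion pair $(\ucL,\ucA)$ with the functor $\bfF = \tbfT \oplus \bfU$ assembled from (\ref{eqn:wb}) and (\ref{eqn:veav}). One small correction: in characteristic zero the unipotent part of $\tcX$ is $\tcU \simeq k\text{-}\Mod$, not $\cU \simeq k\text{-}\mod$, so assumption (e) is verified via $\tucL \simeq R_{\ucT}\text{-}\Modss \times k\text{-}\Mod = (R_{\ucT}\oplus k)\text{-}\Modss$ rather than by invoking Remark \ref{rem:rmc}; this does not affect the argument but is needed for the $\tucC \simeq R\text{-}\tmod$ statement.
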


\begin{proof}
When $\char(k) > 0$, this follows by combining Proposition
\ref{prop:charp} with the structure results for $\ucU$, $\tucU$
recalled in \S \ref{subsubsec:ugpc}, and those for $\ucS$,
$\tucS$ obtained in Proposition \ref{prop:sav}. When 
$\char(k) = 0$, recall from \S \ref{subsubsec:gagqc} that 
$(\ucL,\ucA)$ (resp.~$(\tucL,\ucA)$) is a torsion pair of Serre
subcategories of $\ucC$ (resp.~$\tucC$); moreover,
$\ucL \cong \ucT \times \cU$ and $\tucL \cong \tucT \times \tcU$
by \S \ref{subsubsec:ugslg}. In view of the bi-functorial isomorphisms
(\ref{eqn:veav}) and (\ref{eqn:wb}), the assertions follow from 
Theorem \ref{thm:equiv} like in the cases of vector extensions of
abelian varieties (Proposition \ref{prop:veav}) and of semi-abelian 
varieties (Proposition \ref{prop:sav}).
\end{proof}

Also, recall from Proposition \ref{prop:center} that the intersection
of the left ideals of finite colength in $R$ is zero. One may check
(by using Remark \ref{rem:center}) that this also holds for the 
two-sided ideals of finite colength as left modules. 

We will show in Subsection \ref{subsec:fcheg} that the center of 
$\ucC$ equals $\bQ$ if $\char(k) = 0$, and contains $\bQ \times \bZ_p$
if $\char(k) = p > 0$.

\subsection{Functors of points}
\label{subsec:fp}

Let $G$ be an algebraic group. Then the group of $\bar{k}$-points,
$G(\bar{k})$, is equipped with an action of the absolute Galois
group $\Gamma$. We also have the subgroup of $k_s$-points, 
$G(k_s)$, which is stable under $\Gamma$. Since every 
$x \in G(k_s)$ lies in $G(K)$ for some finite Galois extension of fields
$K/k$, we see that the stabilizer of $x$ in $\Gamma$
is open, i.e., $G(k_s)$ is a discrete $\Gamma$-module. Likewise,
the $\Gamma$-module $G(\bar{k})$ is discrete as well.

Clearly, the assignment $G \mapsto G(\bar{k})$ extends to a
covariant exact functor
\[ (\bar{k}) : \cC \longrightarrow \bZ \Gamma\-\Mod, \quad
\cF \longrightarrow \bZ \Gamma \- \Modtors. \]
The assignment $G \mapsto G(k_s)$ also extends to a covariant functor 
\[ (k_s) : \cC \longrightarrow \bZ \Gamma\-\Mod, \quad
\cF \longrightarrow \bZ \Gamma\-\Modtors \]
which is additive and left exact. But the functor $(k_s)$ is not exact
when $k$ is an imperfect field, as seen from the exact sequence
\[ 0 \longrightarrow \alpha_p \longrightarrow \bG_a
\stackrel{F}{\longrightarrow} \bG_a \longrightarrow 0, \]
where $p := \char(k)$ and $F$ denotes the Frobenius endomorphism, 
$x \mapsto x^p$. Yet the functors $(\bar{k})$ and $(k_s)$ are 
closely related:

\begin{lemma}\label{lem:exponent}

\begin{enumerate}

\item[{\rm (i)}]
Let $K/k$ be an extension of fields of characteristic $p > 0$
and assume that $K^{p^n}\subset k$. Then $p^n x \in G(k)$ 
for any $x \in G(K)$.

\item[{\rm (ii)}] For any $x \in G(\bar{k})$, there exists
$n = n(x) \geq 0$ such that $p^n x \in G(k_s)$.

\end{enumerate}

\end{lemma}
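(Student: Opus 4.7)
The plan for part (i) is to exploit that, for any commutative group scheme in characteristic $p$, multiplication by $p^n$ factors as $(p^n)_G = V^n \circ F^n$, where $F^n = F^n_{G/k} : G \to G^{(p^n)}$ is the iterated relative Frobenius and $V^n : G^{(p^n)} \to G$ is the corresponding iterated Verschiebung; both are $k$-morphisms. It therefore suffices to show $F^n(x) \in G^{(p^n)}(k)$ for any $x \in G(K)$ with $K^{p^n} \subset k$, since applying the $k$-morphism $V^n$ then delivers $p^n x \in G(k)$.

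To verify this, I would pass to an affine open chart $U = \Spec(A) \subset G$ containing the image of $x : \Spec(K) \to G$, which exists because this image is a single point. Then $x|_U$ is dual to a $k$-algebra homomorphism $x^* : A \to K$, and $F^n_{G/k}|_U : U \to U^{(p^n)} = \Spec(A \otimes_{k, F^n_k} k)$ is dual to the ring map $a \otimes b \mapsto a^{p^n} b$. Composing gives $F^n(x)^* : a \otimes b \mapsto x^*(a)^{p^n} \cdot b$, whose image lies in $k \subset K$ by the hypothesis $K^{p^n} \subset k$. One checks $k$-linearity directly, so $F^n(x)^*$ factors as a $k$-algebra map $A \otimes_{k, F^n_k} k \to k \hookrightarrow K$, exhibiting $F^n(x)$ as a $k$-point of $U^{(p^n)} \subset G^{(p^n)}$, as desired.

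For (ii), the plan is to reduce to (i) applied over $k_s$. The point $x \in G(\bar{k})$ factors through the residue field $\kappa(g)$ of its image point $g \in G$; since $G$ is of finite type over $k$ and $\bar{k}/k$ is algebraic, $\kappa(g)$ identifies with a finite subfield $L \subset \bar{k}$. Its separable part $L \cap k_s$ lies in $k_s$, and $L/(L \cap k_s)$ is purely inseparable of some exponent, whence $L^{p^n} \subset k_s$ for $n$ sufficiently large. Setting $K := L \cdot k_s \subset \bar{k}$, an extension of $k_s$ with $K^{p^n} \subset k_s$, part (i) applied to the $k_s$-group scheme $G_{k_s}$ yields $p^n x \in G_{k_s}(k_s) = G(k_s)$. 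The only thing demanding care is the factorization $(p^n)_G = V^n \circ F^n$, which follows by induction on $n$ from $V \circ F = p$ together with the companion identity $F \circ V = p$ on Frobenius twists, both relying on commutativity of $G$; beyond that, part (ii) is pure bookkeeping.
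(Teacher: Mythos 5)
Your proposal is correct and follows essentially the same route as the paper: factor $p^n_G = V^n_{G/k} \circ F^n_{G/k}$, observe that the relative Frobenius sends a $K$-point to a $k$-point when $K^{p^n} \subset k$ (the paper checks this via $\bA^n$ and compatibility with immersions, you via the explicit coordinate-ring computation on an affine chart — the same content), and reduce (ii) to (i) applied over $k_s$, using that every $\bar{k}$-point is defined over a finite, hence $p^n$-th-power-separable, extension. The only minor remark is that the induction giving $V^n \circ F^n = p^n$ needs only $V \circ F = p$ on the successive twists together with the fact that $F$ commutes with $p_G$, not the companion identity $F \circ V = p$; in any case this is the identity the paper simply cites from SGA3.
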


\begin{proof}
(i) By assumption, we have $x \in G(k^{1/p^n})$. Consider the $n$th 
relative Frobenius morphism,
\[ F^n_{G/k} : G \longrightarrow G^{(p^n)} \] 
(see e.g.~\cite[A.3]{CGP}). Then $F^n_{G/k}(x) \in G^{(p^n)}(k)$:
indeed, this holds with $G$ replaced with any scheme of finite type,
since this holds for the affine space $\bA^n$ and the formation
of the relative Frobenius morphism commutes with immersions.
We also have the $n$th Verschiebung, 
\[ V^n_{G/k} : G^{(p^n)} \longrightarrow G, \]
which satisfies $V^n_{G/K} \circ F^n_{G/k} = p^n_G$ (see 
\cite[VIIA.4.3]{SGA3}). It follows that
$p^n x = V^n_{G/k}(F^n_{G/k}(x))$ is in $G(k)$.

(ii) Just apply (i) to $k_s$ instead of $k$, and use the fact that
$\bar{k} = \bigcup_{n \geq 0} k_s^{1/p^n}$.
\end{proof}

As a direct consequence, we obtain:

\begin{lemma}\label{lem:points}

\begin{enumerate}

\item[{\rm (i)}] The natural map $G(k_s)_{\bQ} \to G(\bar{k})_{\bQ}$
is an isomorphism for any algebraic group $G$.

\item[{\rm (ii)}] The covariant exact functor
\[ (\bar{k})_{\bQ}: \cC \longrightarrow \bQ \Gamma\-\Mod, \quad 
G \longmapsto G(\bar{k})_{\bQ} := G(\bar{k})_{\bQ} \]
yields a covariant exact functor, also denoted by   
\[ (\bar{k})_{\bQ}: \ucC \longrightarrow \bQ \Gamma\-\Mod. \]

\end{enumerate}

\end{lemma}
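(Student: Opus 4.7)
The plan is to deduce (i) directly from Lemma \ref{lem:exponent}(ii), and then obtain (ii) by combining the exactness of the ordinary functor of $\bar{k}$-points with the flatness of $\bQ$ over $\bZ$ and the observation that finite group schemes are killed.

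For (i), I would first dispose of the case $\char(k) = 0$, where $k_s = \bar{k}$ and the assertion is trivial. When $\char(k) = p > 0$, the natural map $G(k_s) \hookrightarrow G(\bar{k})$ is injective since $k_s \subset \bar{k}$. By Lemma \ref{lem:exponent}(ii), every $x \in G(\bar{k})$ satisfies $p^{n(x)} x \in G(k_s)$, so the cokernel of the inclusion $G(k_s) \hookrightarrow G(\bar{k})$ is a $p$-primary torsion abelian group. Tensoring the short exact sequence
\[ 0 \longrightarrow G(k_s) \longrightarrow G(\bar{k}) \longrightarrow G(\bar{k})/G(k_s) \longrightarrow 0 \]
with $\bQ$ (which is flat over $\bZ$) kills the torsion cokernel and preserves the injection, giving the desired isomorphism.

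For (ii), the main points are exactness on $\cC$ and factorization through $\ucC$. Exactness of the functor $(\bar{k}): \cC \to \bZ\Gamma\text{-}\Mod$ is standard: it is obviously left exact, and surjectivity on $\bar{k}$-points of a faithfully flat morphism of finite type group schemes is a consequence of the Nullstellensatz (every fiber of an epimorphism in $\cC$ is non-empty over $\bar{k}$ and contains a $\bar{k}$-point). Since $\bQ$ is flat over $\bZ$, composing with $-\otimes_{\bZ}\bQ$ yields an exact functor $(\bar{k})_{\bQ}: \cC \to \bQ\Gamma\text{-}\Mod$. To factor this through $\ucC = \cC/\cF$, it suffices to check that $(\bar{k})_{\bQ}$ vanishes on the Serre subcategory $\cF$ of finite group schemes; but if $G \in \cF$, then $G(\bar{k})$ is a finite, hence torsion, abelian group, so $G(\bar{k})_{\bQ} = 0$. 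By the universal property of the Serre quotient, $(\bar{k})_{\bQ}$ descends to an exact functor $\ucC \to \bQ\Gamma\text{-}\Mod$.

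No part of this argument looks delicate; the only point that requires a moment's thought is the exactness of $(\bar{k})$ on $\cC$ itself, but this is a well-known consequence of algebraic closedness and faithful flatness, and does not rely on smoothness of the groups involved.
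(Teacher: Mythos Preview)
Your proof is correct and follows exactly the approach the paper intends: the paper states this lemma as ``a direct consequence'' of Lemma~\ref{lem:exponent}, and you have simply written out that direct consequence in full, using the already-asserted exactness of $(\bar{k})$ on $\cC$ and the standard universal property of the Serre quotient.
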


\begin{remarks}\label{rem:points}
(i) Assume that $G$ is unipotent. If $\char(k) = p > 0$,
then $G(\bar{k})$ is $p$-torsion and hence $G(\bar{k})_{\bQ} = 0$.
On the other hand, if $\char(k) = 0$, then $G \cong n \bG_a$ 
and hence $G(\bar{k})_{\bQ} \cong n \bar{k}$ as a $\Gamma$-module.
Using the normal basis theorem, it follows that the multiplicity 
of any simple discrete $\Gamma$-module $M$ in $G(\bar{k})_{\bQ}$ 
equals $n \dim(M)$; in particular, all these multiplicities are finite.

The latter property does not extend to the case where $G$ is 
a torus. For example, the multiplicity of the trivial $\Gamma$-module
in $\bG_m(\bar{k})_{\bQ}$ is the rank of the multiplicative group
$k^*$, which is infinite when $k$ is not locally finite. Indeed, 
under that assumption, $k$ contains either $\bQ$ or the field 
of rational functions in one variable $\bF_p(t)$; so the assertion
follows from the infiniteness of prime numbers and of irreducible 
polynomials in $\bF_p[t]$.

Whenn $G$ is an abelian variety, the finiteness of multiplicities 
of the $\Gamma$-module $G(k)_{\bQ}$ will be discussed in the
next subsection.

\medskip

\noindent
(ii) For any abelian variety $A$ and any torus $T$, we have 
a bi-functorial isomorphism
\[ \Ext^1_{\ucC}(A,T) \stackrel{\cong}{\longrightarrow}
\Hom^{\Gamma}(\bfX(T), \wA(\bar{k})_{\bQ}), \]
as follows from (\ref{eqn:wb}) together with Lemma 
\ref{lem:points}. Also, recall that $\wA$ is isogenous to $A$ 
(see e.g.~\cite{Milne-III}) and hence the $\Gamma$-module 
$\wA(\bar{k})_{\bQ}$ is isomorphic (non-canonically) to 
$A(\bar{k})_{\bQ}$. 
\end{remarks}

Next, we associate an endofunctor of $\ucC$ with any discrete 
$\Gamma$-module $M$, which is a free abelian group of finite
rank. We may choose a finite Galois extension 
$K/k \subset k_s/k$ such that $\Gamma$ acts on $M$ via 
its finite quotient $\Gamma' := \Gal(K/k)$. For any 
$G \in \cC$, consider the tensor product of commutative group 
functors $G_K \otimes _{\bZ} M$. This group functor is 
represented by an algebraic group over $K$ (isomorphic to 
the product of $r$ copies of $G_K$, where $r$ denotes the rank 
of $M$ as an abelian group), equipped with an action of $\Gamma'$ 
such that the structure map $G_K \otimes_{\bZ} M \to \Spec(K)$ 
is equivariant. By Galois descent (see e.g.~\cite[Cor.~3.4]{Conrad-II}), 
the quotient
\[ G(M) := (G_K \otimes_{\bZ} M)/\Gamma' \]
is an algebraic group over $k$, equipped with a natural
$\Gamma'$-equivariant isomorphism
\[ G_K \otimes_{\bZ} M 
\stackrel{\cong}{\longrightarrow} G(M)_K. \]
The assignment $G \mapsto G(M)$ extends to a covariant
endofunctor $(M)$ of $\cC$, which is exact as 
the base change functor $\otimes_k \, K$ is faithful. 
Moreover, one may easily check that $(M)$ is independent
of the choice of $K$, and hence comes with a natural 
$\Gamma$-equivariant isomorphism
\[ G_{k_s} \otimes_{\bZ} M 
\stackrel{\cong}{\longrightarrow} G(M)_{k_s} \]
for any $G \in \cC$. In particular, we have an isomorphism
of $\Gamma$-modules 
\[ G(k_s) \otimes_{\bZ} M \stackrel{\cong}{\longrightarrow} 
G(M)(k_s). \]
This implies readily:

\begin{lemma}\label{lem:twist}
With the above notation and assumptions, the endofunctor
$(M)$ of $\cC$ yields a covariant exact endofunctor 
$(M)_{\bQ}$ of $\ucC$ that stabilizes $\ucT$, $\ucU$, $\ucA$
and $\ucS$. Moreover, $(M)_{\bQ}$ only depends on 
$M_\bQ \in \bQ\Gamma$-$\mod$, and there is a natural isomorphism 
of $\Gamma$-modules
\[ 
G(M)(k_s)_{\bQ} \cong G(k_s) \otimes_{\bZ} M_{\bQ} = 
G(k_s)_{\bQ} \otimes_{\bQ} M_{\bQ} 
\]
for any $G \in \cC$.
\end{lemma}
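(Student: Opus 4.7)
The plan is to verify the lemma's four claims in sequence: that $(M)$ descends to an exact endofunctor $(M)_\bQ$ of $\ucC$; stabilization of the four named subcategories; dependence only on $M_\bQ$; and the natural isomorphism on $k_s$-points. For the descent to $\ucC$, observe that $G(M)_K \cong G_K^r$ with $r = \rk_\bZ(M)$, hence $G(M)_K$ is finite when $G_K$ is. Since finiteness of a $k$-group scheme is equivalent to that of its base change to $K$, the functor $(M)$ sends $\cF$ to $\cF$; combined with the exactness of $(M)$ noted before the lemma, this yields an exact endofunctor $(M)_\bQ$ of $\ucC$.

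For the stabilization of $\ucT$, $\ucU$, $\ucA$, each of the properties ``torus'', ``unipotent'', ``abelian variety'' is preserved under finite products (so holds for $G_K^r$ when it holds for $G_K$) and under Galois descent along the finite extension $K/k$. Stabilization of $\ucS$ then follows by applying the exact functor $(M)_\bQ$ to a defining extension $0 \to T \to G \to A \to 0$, as the resulting short exact sequence exhibits $G(M)$ as an extension of an abelian variety by a torus.

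For the $k_s$-points formula, by construction of Galois descent $G(M)(K) = (G_K \otimes_\bZ M)(K) = G(K) \otimes_\bZ M$, the last equality holding because $M \cong \bZ^r$ as an abelian group. Passing to the colimit over finite Galois subextensions $K \subset k_s$ yields a $\Gamma$-equivariant isomorphism $G(M)(k_s) \cong G(k_s) \otimes_\bZ M$; tensoring with $\bQ$ gives the stated formula.

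The main obstacle is showing $(M)_\bQ$ depends only on $M_\bQ$. Given two $\Gamma$-stable lattices $M, M' \subset V := M_\bQ$, a common refinement reduces to the case of an inclusion $\phi: M \hookrightarrow M'$ with finite cokernel. Over $K$, the induced morphism $G(\phi)_K$ is multiplication by an integer matrix of nonzero determinant; Smith normal form reduces its analysis to the individual maps $d \cdot : G_K \to G_K$ for positive integers $d$. For divisible $G$ these have finite kernel and cokernel, so $G(\phi)$ is an isogeny and hence becomes an isomorphism in $\ucC$; this handles $\ucT$, $\ucA$, $\ucS$, and in characteristic zero all of $\ucC$. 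In positive characteristic one invokes Proposition~\ref{prop:charp} to decompose $\ucC \cong \ucS \times \ucU$ and reduces $\ucU$ to perfect $k$ via invariance under purely inseparable extensions, where Galois-cohomological triviality of twists of unipotent groups by integral cocycles yields $G(M) \cong G^r$ canonically, so the restriction of $(M)_\bQ$ to $\ucU$ depends only on $\dim_\bQ V$. Assembling these cases completes the argument.
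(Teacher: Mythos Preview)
The paper gives no proof of this lemma beyond the words ``This implies readily,'' so your argument is not so much a different route as a genuine elaboration of what the author leaves implicit. Your treatment of descent to $\ucC$, stabilization of the subcategories, the $k_s$-points formula, and the divisible (hence characteristic-zero and semi-abelian) case of the dependence on $M_\bQ$ are all correct and along the lines one would expect.

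The one point that is underspecified is the positive-characteristic unipotent case. The phrase ``Galois-cohomological triviality of twists of unipotent groups by integral cocycles'' does not name a standard result, and a reader may not see what is meant. What makes your claim true is that the cocycle $\Gamma'\to\GL_r(\bZ)$ factors through $\GL_r(W_n(K))$, and $H^1(\Gamma',\GL_r(W_n(K)))=1$ since $W_n(K)/W_n(k)$ is finite \'etale Galois and $W_n(k)$ is local (so all projective modules are free); this yields $W_n(M)\cong W_n^r$ over $k$, and the general case follows from the decomposition of objects of $\ucU$ as sums of Witt groups. The paper's own approach to this computation, recorded in Remark~\ref{rem:twist}(i) immediately after the lemma, is to observe that $G(M)_{k_s}\cong rG_{k_s}$ and then invoke the uniqueness of the Witt-vector decomposition in $\ucU$, which is stable under base change; this is perhaps more direct and avoids the cohomological digression. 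Either way, the word ``canonically'' should be dropped: the isomorphism depends on the choice of trivializing element, and what you actually need is only an isomorphism in $\ucU$.
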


\begin{remarks}\label{rem:twist}
(i) If $G$ is a torus, then one readily checks that $G(M)$ is 
the torus with character group $\Hom_{\bZ}(M, \bfX(G))$. 
As a consequence, the endofunctor $(M)$ of $\ucT$ is identified 
with the tensor product by the dual module $M^*$, under the 
anti-equivalence of categories of $\ucT$ with $\bQ \Gamma$-mod.

On the other hand, the endofunctor $(M)$ of $\ucU$ is just given by
the assignment $G \mapsto r G$, where $r$ denotes the rank of the 
free abelian group $M$. Indeed, we may assume that $G$ is indecomposable
and (using the invariance of $\ucU$ under purely inseparable field
extensions) that $k$ is perfect. Then $G \cong W_n$ and hence
$G(M)_{k_s} = W_{n,k_s} \otimes_{\bZ} M \cong r W_{n,k_s}$
in $\ucU_{k_s}$. In view of the uniqueness of the decomposition
in $\ucU$ as a direct sum of groups of Witt vectors, it follows that
$G(M) \cong r W_n$ as desired.

\medskip

\noindent
(ii) The endofunctor $(M)$ can be interpreted in terms 
of Weil restriction when $M$ is a permutation $\Gamma$-module,
i.e., $M$ has a $\bZ$-basis which is stable under $\Gamma$. 
Denote by $\Delta \subset \Gamma$ the isotropy group of some basis element,
and by $K \subset k_s$ the fixed point subfield of $\Delta$. Then
$K/k$ is a finite separable field extension, and one may check 
that there is a natural isomorphism $G(M) \cong \R_{K/k}(G_K)$ 
with the notation of \cite[A.5]{CGP}.

\medskip

\noindent
(iii) The assignment $G \mapsto G(M)$ is in fact a special case of a tensor 
product construction introduced by Milne in the setting of abelian varieties
(see \cite{Milne-II}) and systematically studied by Mazur, Rubin and Silverberg 
in \cite{MRS}. More specifically, the tensor product $M \otimes_{\bZ} G$, 
defined there in terms of Galois cohomology, is isomorphic to $G(M)$ in view 
of \cite[Thm.~1.4]{MRS}. 
\end{remarks}

\subsection{Finiteness conditions for Hom and Ext groups}
\label{subsec:fcheg}

Recall from \S \S \ref{subsubsec:gmt}, \ref{subsubsec:av} 
that the abelian categories $\ucT$, $\ucA$ are $\bQ$-linear and
Hom-finite. Also, recall from \S \ref{subsubsec:ugslg} that 
$\ucU \cong \cU \cong k\-\mod$ is $k$-linear, semi-simple and
Hom-finite when $\char(k) = 0$.

\begin{proposition}\label{prop:qlin}

\begin{enumerate}

\item[{\rm (i)}] The abelian categories $\ucT$ and $\ucA$ are not 
$K$-linear for any field $K$ strictly containing $\bQ$.

\item[{\rm (ii)}] The abelian category $\ucU$ is not $K$-linear for
any field $K$, when $\char(k) = p > 0$.

\item[{\rm (iii)}] The center of $\ucC$ is $\bQ$ when $\char(k) = 0$.
\end{enumerate}

\end{proposition}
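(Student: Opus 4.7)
The plan is to evaluate any putative central $K$-action at a small set of explicit simple objects whose endomorphism algebras are already known, and then to propagate the resulting rationality through the torsion-pair and extension structure of $\ucC$.

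For part (i) with $\ucT$, the torus $\bG_m$ is simple with character module the trivial $\Gamma$-module $\bZ$, so Cartier duality (\ref{eqn:qcartier}) gives $\End_{\ucT}(\bG_m) = \bQ$; any central $K$-action embeds $K \hookrightarrow \bQ$, forcing $K = \bQ$. For $\ucA$, such an action embeds $K$ into $Z(\End^0(A))$ for every simple abelian variety $A$. When $k$ admits a simple $A$ with $Z(\End^0(A)) = \bQ$---a non-CM elliptic curve in characteristic zero, or a supersingular elliptic curve over $\bF_{p^{2n}}$ with quaternion endomorphism algebra $B_{p,\infty}$---we conclude directly. In the remaining case $k = \bF_p$, Honda--Tate identifies each center with $\bQ(\pi_A)$ for a $p$-Weil number $\pi_A$; choosing two elliptic curves with distinct Frobenius traces $a \neq a'$ supplied by Deuring's theorem, the imaginary quadratic fields $\bQ(\sqrt{a^2-4p})$ and $\bQ(\sqrt{{a'}^2-4p})$ are distinct, so their intersection is $\bQ$, again forcing $K = \bQ$.

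For part (ii), if $\ucU$ were $K$-linear with $\char(k) = p$, then the image of $p \in K$ in every $\End_{\ucU}(X)$ is either zero (if $\char K = p$) or invertible (if $\char K \neq p$). Evaluating at $\bG_a$ rules out the latter: $\End_{\ucU}(\bG_a)$ identifies via the anti-equivalence of \S \ref{subsubsec:ugpc} with the residue division ring $R/RV$, in which $p = FV$ is zero because $V \in RV$. Evaluating at $W_2$ rules out the former: $\End_{\ucU}(W_2)$ identifies with $R/RV^2$, and here $p = FV \notin RV^2$ since $F$ is a unit of $R$ and $V$ is a non-zerodivisor in the domain $R$. Both constraints cannot hold simultaneously, so no such $K$ exists.

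For part (iii), assume $\char(k) = 0$ and let $z = (z_G)$ be central in $\ucC$. The Serre subcategory $\ucV \subset \ucC$ of vector extensions of abelian varieties has center $\bQ$ by Proposition \ref{prop:veav}, so the restriction of $z$ to $\ucV$ equals $q \cdot \id$ for some $q \in \bQ$; in particular $z_A = q$ for every abelian variety $A$ and $z_U = q$ for every unipotent $U$. To extend this to a simple torus $T$, I will produce an abelian variety $B$ with $\Ext^1_{\ucC}(B, T) \neq 0$: via Weil--Barsotti (\ref{eqn:wb}), this is equivalent to a nonzero $\Gamma$-equivariant map $\bfX(T) \to \wB(k_s)_\bQ$. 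Taking $B = \R_{K/k}(E_K)$ for a non-CM elliptic curve $E$ base-changed from $\bQ$ and a finite Galois extension $K/k$ trivializing $\bfX(T)$, one has $\wB(k_s)_\bQ \cong E(k_s)_\bQ \otimes_\bQ \bQ[\Gal(K/k)]$ as $\Gamma$-modules, and $\bfX(T)$ embeds in the regular representation $\bQ[\Gal(K/k)]$ with positive multiplicity. Naturality of $z$ combined with $z_B = q$ then forces $\bfX(z_T) = q \cdot \id_{\bfX(T)}$, so $z_T = q$ by Cartier duality. A short argument using $\Hom_{\ucC}(A, L) = 0$ for abelian $A$ and linear $L$ then propagates $z_G = q \cdot \id$ to every $G \in \ucC$, yielding $Z(\ucC) = \bQ$. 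The main obstacle is this Weil-restriction step, which requires the multiplicity computation in $\bQ[\Gal(K/k)]$ and careful tracking of the central action through the Weil--Barsotti identification.
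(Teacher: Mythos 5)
Parts (ii) and (iii) of your argument are essentially sound and close in spirit to the paper's: for (ii) you compare $p$ in $\End_{\ucU}(\bG_a)=R/RV$ and $\End_{\ucU}(W_2)=R/RV^2$ instead of passing to $\varprojlim R/RV^n$, which works; for (iii) you argue at the level of the categorical center (restricting to $\ucV$ and then forcing $z_T=q$ on simple tori via a nonvanishing $\Ext^1_{\ucC}(B,T)$), whereas the paper computes the center of the ring $R$ via Proposition \ref{prop:center}, but the key nonvanishing is the same: your $B=\R_{K/k}(E_K)$ is exactly the twist $E(\bZ[\Gal(K/k)])$ of Remark \ref{rem:twist}(ii), while the paper twists by $\bfX(T)$ itself. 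One correction there: ``non-CM elliptic curve base-changed from $\bQ$'' is not the relevant hypothesis; what your multiplicity computation needs is that $E(k)_{\bQ}\neq 0$ (equivalently $E(K)_{\bQ}\neq 0$ via Frobenius reciprocity), so you must choose $E$ over $\bQ$ of positive rank --- CM is irrelevant, and a rank-zero non-CM curve would make the step fail. This is easily fixed but must be said.

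The genuine gap is in part (i), characteristic $p$. Your case division is: either $k$ admits a simple abelian variety with center $\bQ$ (illustrated by a supersingular elliptic curve over $\bF_{p^{2n}}$ with endomorphism algebra $B_{p,\infty}$), or else ``$k=\bF_p$''. The supersingular example requires $\bF_{p^{2}}\subset k$ (over odd-degree finite fields the Frobenius is irrational and $\End^0$ is an imaginary quadratic field, not the quaternion algebra), so the complementary case is not just $\bF_p$: it contains, e.g., $\bF_{p^3}$, $\bF_p(t)$, $\bF_p((t))$ and all function fields with constant field $\bF_p$, none of which your argument addresses. (The paper covers non--locally-finite $k$ by an elliptic curve with transcendental $j$-invariant, so $\End_{\ucA}(E)=\bQ$, and covers all locally finite $k$ uniformly by playing ordinary curves, whose $\End_{\ucA}(E)$ is imaginary quadratic with $p$ split, against supersingular ones, whose endomorphism algebra embeds in $B_{p,\infty}$ and hence contains no such field.) Moreover, even in your $k=\bF_p$ case the inference ``distinct Frobenius traces $a\neq a'$ give distinct fields $\bQ(\sqrt{a^2-4p})$, $\bQ(\sqrt{a'^2-4p})$'' is false as stated: for $p=5$ the traces $a=2$ and $a'=4$ both give $\bQ(\sqrt{-1})$, and for $p=2$ the only ordinary traces $\pm 1$ give the same field $\bQ(\sqrt{-7})$. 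One must either select the traces so that the resulting quadratic fields are non-isomorphic (possible, but needs an argument, and separate care for $p=2,3$) or use the paper's split/non-split criterion at $p$, which avoids the issue entirely.
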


\begin{proof}
(i) The assertion clearly holds for $\ucT$, as 
$\End_{\ucT}(\bG_m) = \bQ$. For $\ucA$, we replace $\bG_m$ with
appropriate elliptic curves $E$. Given any $t \in k$,
there exists such a curve with $j$-invariant $t$ (see 
e.g.~\cite[Prop.~III.1.4]{Silverman}). If $\char(k) = 0$, then we choose
$t \in \bQ \setminus \bZ$; in particular, $t$ is not an algebraic
integer. By \cite[Thm.~C.11.2]{Silverman}, we have 
$\End_{\cA}(E) = \bZ$ and hence $\End_{\ucA}(E) = \bQ$. If 
$\char(k) = p$ and $k$ is not algebraic over $\bF_p$, then we 
may choose $t$ transcendental over $\bF_p$. By 
\cite[\S 22]{Mumford}, we then have again $\End_{\ucA}(E) = \bQ$. 
Finally, if $k$ is algebraic over $\bF_p$, then every abelian
variety $A$ is defined over a finite subfield of $k$, and hence
the associated Frobenius endomorphism lies in 
$\End_{\ucA}(A) \setminus \bQ$. In that case, it follows from
\cite[(2.3)]{Oort88} that $\End_{\ucA}(E)$ is an imaginary quadratic
number field in which $p$ splits, if $E$ is ordinary. On the other 
hand, if $E$ is supersingular, then $\End_{\ucA}(E)$ contains no
such field. It follows that the largest common subfield to all
rings $\End_{\ucA}(E)$ is $\bQ$.

(ii) Assume that $\ucU$ is $K$-linear for some field $K$.
Then $K$ is a subfield of $\End_{\ucU}(\bG_a)$, and hence 
$\char(K) = p$. More generally, for any $n \geq 1$, we have 
a ring homomorphism $K \to \End_{\ucU}(W_n) = R/RV^n$
with the notation of \S \ref{subsubsec:ugpc}. As these homomorphisms 
are compatible with the natural maps 
$R/RV^{n+1} = \End_{\ucU}(W_{n+1}) \to \End_{\ucU}(W_n) = R/RV^n$,
we obtain a ring homomorphism $K \to \varprojlim R/RV^n$.
Since the right-hand side has characteristic $0$, this yields
a contradiction.

(iii) By Proposition \ref{prop:center}, it suffices to show that
the center of $R$ is $\bQ$. Moreover, every central element of $R$
is of the form
\[ z = \begin{pmatrix}
\sum_{A \in J} y_A & 0 \\
0 & (\sum_{T \in I} x_T) + x \\
\end{pmatrix}, \]
where each $y_A$ is central in $D_A$, each $x_T$ is central in $D_T$,
and $x \in k$; also, $x m_A = m_A y_A$ for all $m \in M_A$, and 
$x_T m_{T,A} = m_{T,A} y_A$ for all $m_{T,A} \in M_{T,A}$. 
Like in the proof of Proposition \ref{prop:veav}, 
it follows that $x \in \bQ$ and $y_A = x$ for all $A$. 
As a consequence, $x_T = y_A$ whenever $M_{T,A} \neq 0$.

To complete the proof, it suffices to show that for any $T \in I$,
there exists $A \in J$ such that $M_{T,A} \neq 0$; equivalently,
the $\Gamma$-module $A(\bar{k})_{\bQ}$ contains $\bfX(T)_{\bQ}$. 
We may choose an elliptic curve $E$ such that $E(k)$ is not torsion,
i.e., $E(k)_{\bQ} \neq 0$; then the $\Gamma$-module
$E(\bfX(T))(\bar{k})$ contains $\bfX(T)$. Thus, the desired assertion
holds for some simple factor $A$ of $E(\bfX(T))$.
\end{proof}

\begin{remark}\label{rem:qlin}
The above statement (iii) does not extend to the case where 
$\char(k) = p > 0$, since we then have $\ucC \cong \ucS \times \ucU$.
One may then show that the center of $\ucU$ contains the ring
of $p$-adic integers, $\bZ_p = W(\bF_p)$, with equality if and only 
if $k$ is infinite. In view of Remark \ref{rem:sav}, it follows that
the center of $\ucC$ contains $\bQ \times \bZ_p$, with equality if
$k$ is separably closed.
\end{remark}

\begin{proposition}\label{prop:savf}
The abelian category $\ucS$ is $\bQ$-linear and Hom-finite. It is 
Ext-finite if and only if $k$ satisfies the following condition:

\medskip

\noindent
{\rm (MW)} The vector space $A(k)_{\bQ}$ is finite-dimensional for any 
abelian variety $A$. 
\end{proposition}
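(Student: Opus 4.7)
The plan is to exploit the torsion pair $(\ucT,\ucA)$ in $\ucS$ together with Proposition \ref{prop:homext} and the Weil--Barsotti formula (\ref{eqn:wb}). First, I will observe that every semi-abelian variety is divisible (being an extension of a divisible group by a divisible one), so by the isomorphisms (\ref{eqn:divisible}) all Hom- and Ext-groups in $\ucS$ are naturally $\bQ$-vector spaces, giving $\bQ$-linearity at once.

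Next, applying Proposition \ref{prop:homext} to $(\ucS,\ucT,\ucA)$ yields, for any $G = (T,A,\xi)$ and $G' = (T',A',\xi')$, an exact sequence
\[ 0 \longrightarrow \Hom_{\ucS}(G,G') \longrightarrow
\Hom_{\ucT}(T,T') \times \Hom_{\ucA}(A,A') \longrightarrow
\Ext^1_{\ucS}(A,T') \longrightarrow \Ext^1_{\ucS}(G,G') \longrightarrow 0. \]
Since $\ucT$ and $\ucA$ are Hom-finite over $\bQ$ (\S\S \ref{subsubsec:gmt}, \ref{subsubsec:av}), Hom-finiteness of $\ucS$ follows immediately from the left half. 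From the right half, Ext-finiteness of $\ucS$ is equivalent to $\Ext^1_{\ucS}(A,T')$ being finite-dimensional for all $A \in \ucA$ and $T' \in \ucT$. By (\ref{eqn:wb}) together with Lemma \ref{lem:points}, this group is $\Hom^{\Gamma}(\bfX(T'),\wA(k_s)_{\bQ})$.

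For the direction $(\mathrm{MW}) \Rightarrow \text{Ext-finite}$: I will pick a finite Galois extension $K/k$ inside $k_s$ such that $\Gal(k_s/K)$ acts trivially on the finitely generated abelian group $\bfX(T')$, and use Galois descent for the smooth scheme $\wA$ (compatibly with $\otimes \bQ$, since $\Gamma$-invariants commute with the filtered colimit defining rationalization) to identify
\[ \Hom^{\Gamma}(\bfX(T'),\wA(k_s)_{\bQ}) = \Hom^{\Gal(K/k)}(\bfX(T'),\wA(K)_{\bQ}). \]
This space embeds into $\Hom_{\bZ}(\bfX(T'),\wA(K)_{\bQ})$, which has $\bQ$-dimension $\rk(\bfX(T')) \cdot \dim_{\bQ} \wA(K)_{\bQ}$. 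Now Weil restriction gives $\wA(K) = \R_{K/k}(\wA_K)(k)$, where $\R_{K/k}(\wA_K)$ is an abelian variety over $k$, so (MW) forces $\wA(K)_{\bQ}$ to be finite-dimensional. Conversely, taking $T' = \bG_m$ (so $\bfX(T') = \bZ$ with trivial $\Gamma$-action) gives $\Ext^1_{\ucS}(A,\bG_m) \cong \wA(k_s)^{\Gamma}_{\bQ} \cong \wA(k)_{\bQ}$; if this is finite-dimensional for every $A$, then (MW) follows because every abelian variety is isogenous to the dual of some abelian variety.

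The main subtlety I expect to handle is the interplay between $k_s$- and $K$-points (resp.~$k$-points) in the inseparable case, and the commutation of $\Gamma$-invariants with $\otimes \bQ$. Both are standard for discrete $\Gamma$-modules and smooth group schemes, but they must be invoked carefully, since the functor $G \mapsto G(k_s)$ is not itself exact over imperfect fields; Lemma \ref{lem:points} is precisely what makes this work after rationalization.
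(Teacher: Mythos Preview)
Your proof is correct and follows essentially the same approach as the paper: both use divisibility for $\bQ$-linearity, Proposition \ref{prop:homext} to reduce Hom- and Ext-finiteness to the groups $\Ext^1_{\ucS}(A,T)$, and the Weil--Barsotti isomorphism (\ref{eqn:wb}) to rewrite these as $\Hom^{\Gamma}(\bfX(T),\wA(k_s)_{\bQ})$. The only difference is in the final step linking this to (MW): the paper invokes Lemma \ref{lem:twist} (the twist construction $A \mapsto A(M)$), whereas you argue directly via Weil restriction $\R_{K/k}(\wA_K)$ after trivializing $\bfX(T)$ over a finite Galois extension $K/k$; by Remark \ref{rem:twist}(ii) this is exactly the permutation-module case of the twist, so the two arguments are really the same.
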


\begin{proof}
Recall that every semi-abelian variety is divisible. In view of
(\ref{eqn:divisible}), it follows that $\ucS$ is $\bQ$-linear. 
It is Hom-finite in view of the Hom-finiteness of $\ucT$ and $\ucA$, 
combined with Proposition \ref{prop:homext}.

By that proposition, $\ucS$ is Ext-finite if and only if
the $\bQ$-vector space $\Ext^1_{\ucS}(A,T)$ is finite-dimensional  
for any abelian variety $A$ and any torus $T$. In view of
the isomorphism (\ref{eqn:wb}) and of the anti-equivalence of
categories (\ref{eqn:qcartier}), this amounts to the condition
that the vector space $\Hom^{\Gamma}(M,\wA(k_s)_{\bQ})$ be 
finite-dimensional for any $M \in \bQ\Gamma$-$\mod$. The latter 
condition is equivalent to (MW) by Lemma \ref{lem:twist}.
\end{proof}

\begin{remarks}\label{rem:mw}
(i) The above condition (MW) is a weak version of the Mordell-Weil
theorem, which asserts that the abelian group $A(k)$ is finitely 
generated for any abelian variety $A$ over a number field $k$.

\medskip

\noindent
(ii) The condition (MW) holds trivially if $k$ is locally finite,
as the abelian group $A(k)$ is torsion under that assumption.

\medskip

\noindent
(iii) Let $K/k$ be a finitely generated regular extension of fields
(recall that the regularity assumption means that $k$ is algebraically
closed in $K$, and $K$ is separable over $k$).
If (MW) holds for $k$, then it also holds for $K$ in view of
the Lang-N\'eron theorem (see \cite{Conrad-II} for a modern
proof of this classical result). As a consequence, (MW) holds 
whenever $k$ is finitely generated over a number field or 
a locally finite field.

One can also show that (MW) is invariant under purely transcendental
extensions (not necessarily finitely generated), by using the fact 
that every rational map from a projective space to an abelian variety 
is constant.
\end{remarks}

\begin{proposition}\label{prop:veavf}
Assume that $\char(k) = 0$. Then the $\bQ$-linear 
category $\ucV$ (resp.~$\ucC$) is Hom-finite if and only if 
$k$ is a number field. Under that assumption, $\ucV$ and $\ucC$ 
are Ext-finite as well.
\end{proposition}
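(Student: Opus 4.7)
The plan is to apply Proposition \ref{prop:homext} to appropriate torsion pairs, reducing both Hom- and Ext-finiteness to the $\bQ$-finite-dimensionality of a small list of concrete spaces. In characteristic $0$, the pair $(\cU,\ucA)$ is a torsion pair of semi-simple Serre subcategories of $\ucV$, and since $\ucL \cong \ucT \times \cU$ is semi-simple by \S \ref{subsubsec:ugslg}, the pair $(\ucL,\ucA)$ is a torsion pair of semi-simple Serre subcategories of $\ucC$; in particular, the hypotheses of Proposition \ref{prop:homext} are satisfied in both settings. The forward direction is then immediate: $\bG_a$ lies in $\cU$, hence in both $\ucV$ and $\ucC$, and $\End_{\ucC}(\bG_a) = \End_{\cU}(\bG_a) = k$, so Hom-finiteness of either $\ucV$ or $\ucC$ forces $k$ to be a finite-dimensional $\bQ$-vector space, i.e., a number field.

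For the converse, I assume $k$ is a number field and apply Proposition \ref{prop:homext} to $(\ucL,\ucA)$ in $\ucC$. The resulting exact sequence expresses both $\Hom_{\ucC}(G,G')$ and $\Ext^1_{\ucC}(G,G')$ in terms of the spaces $\Hom_{\ucL}(L,L')$, $\Hom_{\ucA}(A,A')$, and $\Ext^1_{\ucC}(A,L')$, where $L = \bfR(G)$, $A = \bfL(G)$, and similarly for $G'$. Decomposing $L = T \oplus U$, $L' = T' \oplus U'$, the Hom-spaces $\Hom_{\ucT}(T,T')$ and $\Hom_{\ucA}(A,A')$ are always $\bQ$-finite-dimensional by Cartier duality (\ref{eqn:qcartier}) and the classical theory of abelian varieties, while $\Hom_{\cU}(U,U') = \Hom_k(U,U')$ is finite-dimensional over $k$ and hence over $\bQ$ since $k/\bQ$ is finite. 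The isomorphism (\ref{eqn:veav}) identifies $\Ext^1_{\ucC}(A,U')$ with $\Hom_k(H^1(A,\cO_A)^*, U')$, again finite-dimensional over $k$.

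The main obstacle is the toric summand $\Ext^1_{\ucC}(A,T')$, which by the Weil--Barsotti formula (\ref{eqn:wb}) is identified with $\Hom^{\Gamma}(\bfX(T'), \wA(k_s)_{\bQ})$. The proof of Proposition \ref{prop:savf} already shows, via Lemma \ref{lem:twist} applied to the cocharacter lattice of $T'$, that $\bQ$-finite-dimensionality of this Hom-space for all $T'$ and $A$ is equivalent to condition (MW) on $k$. For a number field $k$, (MW) is precisely the Mordell--Weil theorem, so all the required spaces are $\bQ$-finite-dimensional, yielding Hom- and Ext-finiteness of $\ucC$. The corresponding statements for $\ucV$ then follow either by applying the same reasoning to the torsion pair $(\cU,\ucA)$ in $\ucV$, or by noting that $\ucV$ is a Serre subcategory of $\ucC$, so that its Hom- and $\Ext^1$-groups embed in those of $\ucC$.
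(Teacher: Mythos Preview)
Your argument is correct and follows essentially the same route as the paper: both directions hinge on $\End_{\ucC}(\bG_a)=k$ for the forward implication, and on Proposition~\ref{prop:homext} applied to the torsion pair $(\ucL,\ucA)$ together with the isomorphisms (\ref{eqn:veav}), (\ref{eqn:wb}) and the Mordell--Weil theorem for the converse. One small terminological slip: in invoking Lemma~\ref{lem:twist} you mean the character lattice $\bfX(T')$, not the cocharacter lattice, though this does not affect the argument.
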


\begin{proof}
If $\ucV$ is Hom-finite, then $\End_{\ucV}(\bG_a)$ is finite-dimensional
as a $\bQ$-vector space. Since $\End_{\ucV}(\bG_a) = \End_{\cU}(\bG_a) = k$, 
this means that $k$ is a number field. 

Conversely, if $k$ is a number field, then $\cU$ is 
Hom-finite, and hence so is $\ucL$. In view of Proposition
\ref{prop:homext}, it follows that $\ucC$ is Hom-finite,
and hence so is $\ucV$. By that proposition again, to prove that
$\ucC$ is Ext-finite, it suffices to check that the 
$\bQ$-vector space $\Ext^1_{\ucC}(A,L)$ is finite-dimensional
for any abelian variety $A$ and any connected linear algebraic
group $L$. Since $L = U \times T$ for a unipotent group $U$
and a torus $T$, this finiteness assertion follows by combining 
the isomorphisms (\ref{eqn:veav}), (\ref{eqn:wb}) and 
the Mordell-Weil theorem.
\end{proof}

\subsection{Finiteness representation type: an example}
\label{subsec:frt}

As in \S \ref{subsubsec:veav}, we consider the abelian category
$\ucV$ of vector extensions of abelian varieties over a field 
$k$ of characteristic $0$. Recall that $\ucV$ is $\bQ$-linear
and hereditary, and has enough projectives; its simple objects
are the additive group $\bG_a$ and the simple abelian varieties. 
In particular, $\ucV$ has infinitely many isomorphism classes
of simple objects. Also, by Proposition \ref{prop:veavf}, 
$\ucV$ is Hom-finite if and only if $k$ is a number field; 
then $\ucV$ is Ext-finite as well.

We now assume that $k$ is a number field. Choose a finite
set $F = \{ A_1, \ldots, A_r \}$ of simple abelian varieties, 
pairwise non-isogenous. Denote by $\ucV_F$ the Serre subcategory
of $\ucV$ generated by $F$. More specifically, the objects of
$\ucV_F$ are the algebraic groups obtained as extensions
\[ 0 \longrightarrow m_0 \bG_a \longrightarrow G \longrightarrow 
\bigoplus_{i=1}^r m_i A_i \longrightarrow 0, \]
where $m_0,m_1,\ldots, m_r$ are non-negative integers. The morphisms 
of $\ucV_F$ are the homomorphisms of algebraic groups.

By Theorem \ref{thm:equiv} and Remark \ref{rem:rmc}, we have 
an equivalence of categories
\[ \ucV_F \stackrel{\cong}{\longrightarrow} R_F\-\mod, \]
where $R_F$ denotes the triangular matrix ring
\[ \begin{pmatrix}
D_1 \oplus \cdots \oplus D_r & 0 \\
M_1 \oplus \cdots \oplus M_r & k \\
\end{pmatrix}. \]
Here $D_i := \End_{\ucV}(A_i)^{\op}$ is a division ring of finite
dimension as a $\bQ$-vector space, and $M_i := H^1(A_i,\cO_{A_i})^*$ 
is a $k$-$D_i$-bimodule, of finite dimension as a $k$-vector space.
Thus, $R_F$ is a finite-dimensional $\bQ$-algebra. Also, $R_F$ is 
hereditary, since so is $\ucV_F$. 

The $\bQ$-\emph{species} of $R_F$ is the directed graph
$\Gamma_F$ with vertices $0,1, \ldots,r$ and edges 
$\varepsilon_i := (0,i)$ for $i = 1,\ldots,r$.
The vertex $0$ is labeled with the field $k$, and each vertex
$i = 1,\ldots,r$ is labeled with the division ring $D_i$; each 
edge $\varepsilon_i$ is labeled with the $k$-$D_i$-bimodule $M_i$.
The category $R_F$-mod is equivalent to that of representations
of the $\bQ$-species $\Gamma_F$, as defined in \cite{DR} 
(see also \cite{Lemay}). 

The \emph{valued graph} of $\Gamma_F$ is the underlying non-directed
graph $\Delta_F$, where each edge $\{ 0 , i \}$ is labeled with the pair
$(\dim_k(M_i), \dim_{D_i}(M_i))$. As all edges contain $0$, we say 
that $0$ is a \emph{central vertex}; in particular, $\Delta_F$ 
is connected.

Recall that an Artin algebra is said to be 
\emph{of finite representation type} if it has only finitely 
many isomorphism classes of indecomposable modules of finite length.
In view of the main result of \cite{DR}, $R_F$ is 
of finite representation type if and only if $\Delta_F$ is a Dynkin 
diagram. By inspecting such diagrams having a central vertex, 
this is equivalent to $\Delta_F$ being a subgraph (containing $0$ 
as a central vertex) of one of the following graphs:

\bigskip

\xymatrix{\bfB_3: & {1} & \ar@{-}[l] {0} 
& \ar@{-}[l]_{(1,2)} {2} & & 
\bfC_3: \qquad {1} & \ar@{-}[l] {0} 
& \ar@{-}[l]_{(2,1)} {2}}

\bigskip

\xymatrix{\bfD_4: \qquad {1} & \ar@{-}[l] {0} & \ar@{-}[l] {2} & \\
 & \ar@{-}[u] {3} &  \\} 

\bigskip

\xymatrix{\bfG_2:  & {0} & \ar@{-}[l]_{(3,1)} {1} 
& & &  \bfG_2^{\op}:  & {0} & \ar@{-}[l]_{(1,3)} {1}}

\bigskip

\noindent
The subgraphs obtained in this way are as follows:

\bigskip

\xymatrix{\bfA_2:  & {0} & \ar@{-}[l] {1} 
& & & \bfB_2:  & {0} & \ar@{-}[l]_{(1,2)} {1}}
 
\bigskip

\xymatrix{\bfC_2: & {0} & \ar@{-}[l]_{(2,1)} {1}
& & & \bfA_3: & {1} & \ar@{-}[l] {0} & \ar@{-}[l] {2}}

\bigskip

\noindent
Here all unmarked edges have value $(1,1)$. We set 
\[ g_i := \dim(A_i) = \dim_k(M_i), \quad 
n_i := [D_i : \bQ], \quad n := [k : \bQ]. \] 
Then the label of each edge $\{ 0, i \}$ is $(g_i, \frac{g_i n}{n_i})$.
The above list entails restrictions on these labels, and hence on the simple
abelian varieties $A_i$ and the associated division rings $D_i$. We will 
work out the consequences of these restrictions in the case where $k$ 
is the field of rational numbers, which yields an especially simple result:

\begin{proposition}\label{prop:frt}
When $k = \bQ$, the algebra $R_F$ is of finite representation type
if and only if $\Delta_F = \bfD_4, \bfC_3, \bfG_2$ or a subgraph containing
$0$ as a central vertex (i.e., $\bfA_2, \bfA_3,\bfC_2$), and the abelian 
varieties $A_i$ satisfy the following conditions:

\begin{enumerate}

\item[{\rm $\bfD_4$:}] $A_1,A_2,A_3$ are elliptic curves.

\item[{\rm $\bfC_3$:}] $A_1$ is an elliptic curve and $A_2$ is a simple 
abelian surface with $[D_2 : \bQ] = 2$.

\item[{\rm $\bfG_2$:}] $A_1$ is a simple abelian threefold with
$[D_1 : \bQ] = 3$.

\end{enumerate}

\end{proposition}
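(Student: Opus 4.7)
The plan is to specialize the discussion preceding the proposition to the case $k = \bQ$, so that $n = 1$ and the label of the edge $\{0,i\}$ of $\Delta_F$ reduces to $(g_i, g_i/n_i)$. Combined with the equivalence $\ucV_F \cong R_F\-\mod$ and the fact (already recorded above) that $R_F$ has finite representation type if and only if $\Delta_F$ is a Dynkin diagram containing $0$ as a central vertex, the proposition reduces to a case analysis over the star-shaped Dynkin diagrams with central vertex, namely $\bfA_2, \bfA_3, \bfB_2, \bfB_3, \bfC_2, \bfC_3, \bfD_4, \bfG_2$ and $\bfG_2^{\op}$.

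Since $g_i$ and $n_i$ are positive integers, the ratio $g_i/n_i$ must also be a positive integer, so no edge of $\Delta_F$ may carry the label $(1,2)$ or $(1,3)$. This eliminates $\bfB_2$ and $\bfB_3$ (each containing a $(1,2)$ edge) and $\bfG_2^{\op}$ (a single $(1,3)$ edge), leaving precisely $\bfA_2$, $\bfA_3$, $\bfC_2$, $\bfC_3$, $\bfD_4$ and $\bfG_2$.

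For the converse direction, I translate each surviving edge label $(g_i, g_i/n_i)$ back into arithmetic data on $A_i$. A label $(1,1)$ forces $g_i = n_i = 1$: then $A_i$ is an elliptic curve, and $D_i = \bQ$ holds automatically because every elliptic curve over $\bQ$ has $\End_\bQ(A_i) = \bZ$ (as already used in the proof of Proposition~\ref{prop:qlin}). A label $(2,1)$ forces $g_i = 2, n_i = 2$, so $A_i$ is a simple abelian surface with $[D_i : \bQ] = 2$; and a label $(3,1)$ forces $g_i = 3, n_i = 3$, so $A_i$ is a simple abelian threefold with $[D_i : \bQ] = 3$. Assembling these edge-by-edge translations over the six surviving diagrams reproduces exactly the conditions stated for $\bfD_4$, $\bfC_3$, $\bfG_2$, together with those for the subgraphs $\bfA_2, \bfA_3, \bfC_2$.

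The main obstacle is the automatic equality $D_i = \bQ$ for elliptic curves over $\bQ$: this relies on the fact that even for CM elliptic curves defined over $\bQ$, the non-trivial endomorphisms are only defined over the imaginary quadratic CM field and never descend to $\bQ$, so that $\End_\bQ(A_i) = \bZ$ without exception. Once this arithmetic input is granted, the remainder of the argument is a direct inspection of the six surviving star-shaped Dynkin diagrams in light of the dictionary just described.
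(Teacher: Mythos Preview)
Your argument is correct and follows the same overall scheme as the paper: reduce to the list of star-shaped Dynkin diagrams and read off the constraints on $(g_i,n_i)$ edge by edge. The one substantive difference is how you obtain $D_i = \bQ$ when $g_i = 1$. You invoke the arithmetic fact that every elliptic curve over $\bQ$ has $\End_{\bQ}(E) = \bZ$ (the CM endomorphisms never descend to $\bQ$). The paper instead argues purely from the bimodule: since $M_i$ is a nonzero right $D_i$-module with $\dim_{\bQ}(M_i) = g_i = 1$, the faithful action gives $D_i \hookrightarrow \End_{\bQ}(M_i) = \bQ$, forcing $D_i = \bQ$. The paper's route is more self-contained (no input from the arithmetic of elliptic curves), while yours makes the geometric meaning transparent. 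Note also that your citation of Proposition~\ref{prop:qlin} is slightly off: that proof only establishes $\End_{\cA}(E) = \bZ$ for \emph{specific} elliptic curves (those with non-integral $j$), not for all of them; the general statement you need is true but requires the standard fact about CM not descending.
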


\begin{proof}
Since $\Delta_F$ is a Dynkin diagram, we have $\dim_{\bQ}(M_i) = 1$
or $\dim_{D_i}(M_i) = 1$. In the former case, we have $g_i = 1$,
that is, $A_i$ is an elliptic curve. Moreover, 
$D_i \hookrightarrow \End_{\bQ}(M_i)$ as $M_i$ is a $k$-$D_i$-bimodule;
thus, $D_i = \bQ$. In the latter case, we have $g_i = n_i$. The result
follows from these observations via a case-by-case checking. 
\end{proof}

\begin{remarks}\label{rem:frt}
(i) We may view the Dynkin diagrams $\bfD_4$ and $\bfC_3$ as unfoldings 
of $\bfG_2$. In fact, a similar picture holds for the abelian varieties
under consideration: let $A := A_1 \oplus \cdots \oplus A_r$, then
$A_1,\ldots,A_r$ satisfy the assertion of Proposition \ref{prop:frt}
if and only if $\dim(A) = 3 = \dim \End_{\ucV}(A)_{\bQ}$. 
In the ``general'' case where $A$ is simple, this yields type $\bfG_2$; 
it ``specializes'' to types $\bf C_3$ and $\bfD_4$.

\medskip

\noindent
(ii) When $R_F$ is of finite representation type, its indecomposable
modules of finite length are described by the main result of \cite{DR}:
the isomorphism classes of such modules correspond bijectively to 
the positive roots of the root system with Dynkin diagram $\Delta_F$, 
by assigning with each module its dimension type (the sequence of 
multiplicities of the simple modules). 
This yields a case-by-case construction of the indecomposable objects 
of $\ucV_F$. For example, in type $\bfD_4$, the indecomposable object 
associated with the highest root (i.e., with the sequence 
of multiplicities $m_0 = 2$, $m_1 = m_2 = m_3 = 1$) is the quotient 
of the universal vector extension $E(A_1 \oplus A_2 \oplus A_3)$ 
by a copy of $\bG_a$ embedded diagonally in 
$U(A_1 \oplus A_2 \oplus A_3) \cong 3 \bG_a$.
But we do not know any uniform construction of indecomposable
objects for all types, along the lines of (i).

\medskip

\noindent
(iii) All the abelian varieties $A_i$ over $\bQ$ that occur in 
Proposition \ref{prop:frt} satisfy the condition that $\End(A_i)_{\bQ}$
is a field of dimension equal to $\dim(A_i)$. This condition defines 
the class of \emph{abelian varieties of $\GL_2$-type}, introduced by
Ribet in \cite{Ribet}; it includes all elliptic curves over 
$\bQ$, and also the abelian varieties associated with certain modular
forms via a construction of Shimura (see \cite[Thm.~7.14]{Shimura}). 
Assuming a conjecture of Serre on Galois representations, Ribet 
showed in \cite{Ribet} that this construction yields all abelian 
varieties of $\GL_2$-type up to isogeny. 

Examples of abelian varieties of $\GL_2$-type have been obtained by  
Gonz\'alez, Gu\'ardia and Rotger in dimension $2$ (see 
\cite[Cor.~3.10]{GGR}), and by Baran in dimension $3$ (see 
\cite{Baran}). 

\medskip

\noindent
(iv) Still assuming that $k$ is a number field, the question of 
characterizing finite representation type makes sense,
more generally, for the Serre subcategory $\ucC_{E,F} \subset \ucC$ 
generated by a finite set $E$ of simple linear algebraic groups and 
a finite set $F$ of simple abelian varieties, pairwise non-isogenous 
(so that $\ucC_{\bG_a,F} = \ucV_F$). The abelian category
$\ucC_{E,F}$ is equivalent to $R_{E,F}$-mod, where $R_{E,F}$ is a 
triangular matrix algebra of finite dimension over $\bQ$, constructed
as above. The $\bQ$-species associated with $R_{E,F}$ is the directed
graph $\Gamma_{E,F}$ with vertices $E \sqcup F$ and edges $(i,j)$ 
for all $i \in E$, $j \in F$ such that 
$\Ext^1_{\ucC}(A_j,L_i) \neq 0$; here $A_j$ (resp.~$L_i$) denotes the
corresponding simple abelian variety (resp.~linear algebraic group).
In particular, if $\bG_a \in E$ then the associated vertex is linked
to all vertices in $F$, but some simple tori need not be linked 
to some simple abelian varieties. Each vertex $v$ is labeled with
the division ring $D_v$ opposite to the endomorphism ring of the 
corresponding simple module, and each edge $(i,j)$ is labeled
with the $D_i$-$D_j$-bimodule $\Ext^1_{\ucC}(A_j,L_i)$. Then again,
the category $R_{E,F}$ is equivalent to that of representations of
the $\bQ$-species $\Gamma_{E,F}$; it is of finite representation
type if and only if each connected component of the associated
valued graph $\Delta_{E,F}$ is a Dynkin diagram. Note that such
a diagram comes with a bipartition (by vertices in $E$, $F$).

To obtain a full characterization of finite representation type
in this generality, we would need detailed information on the
structure of $\Gamma$-module of $A(\bar{k})_{\bQ}$ for any $A \in F$.
But it seems that very little is known on this topic. For example,
just take $E := \{ \bG_m \}$; recall that 
$\Ext^1_{\ucC}(A,\bG_m) \cong A(k)_{\bQ}$ for any abelian variety $A$.
We may thus assume that the finitely generated abelian group $A(k)$ 
is infinite for any $A \in F$; then $\bG_m$ is a central vertex
of $\Gamma_{\bG_m,F}$. Arguing as in the proof of Proposition
\ref{prop:frt}, one obtains a similar characterization of finite
representation type in terms of Dynkin diagrams satisfying the
following conditions:

\begin{enumerate}

\item[{\rm $\bfD_4$:}] $[D_i : \bQ] = 3 = \dim A_i(k)_{\bQ}$
for $i = 1,2,3$.

\item[{\rm $\bfC_3$:}] $[D_1 : \bQ] = 1 = \dim A_1(k)_{\bQ}$
and $[D_2 : \bQ] = 2 = \dim A_2(k)_{\bQ}$.

\item[{\rm $\bfG_2$:}] $[D_1 : \bQ] = 3 = \dim A_1(k)_{\bQ}$.

\end{enumerate}

As a consequence, all simple abelian varieties $A$ occuring in
$F$ must satisfy $[D_A : \bQ ] \leq 3$ (in particular, $D_A$
is commutative) and $\dim A(k)_{\bQ} = [D_A : \bQ ]$. We do not
know whether such abelian varieties exist in arbitrary large
dimensions.
\end{remarks}

\medskip

\noindent
{\bf Acknowledgements}.
I had the opportunity to present the results from \cite{Brion-II} 
and some results from the present paper, at the Lens 2016 mini-courses 
and the International Conference on Representations of Algebras, Syracuse, 
2016. I thank the organizers of both events for their invitation, and 
the participants for stimulating questions. Also, I warmly thank 
Claire Amiot, Brian Conrad, St\'ephane Guillermou, Henning Krause, 
George Modoi, Idun Reiten and Ga\"el R\'emond for very 
helpful discussions or e-mail exchanges.

\bibliographystyle{amsalpha}

\end{document}